	\newtheorem{thm}{Theorem}[section]
	\newtheorem{prop}[thm]{Proposition}
	\newdefinition{rmk}[thm]{Remark}
	\newproof{pf}{Proof}
\journal{Computer Methods in Applied Mechanics and Engineering}
\begin{document}

\begin{frontmatter}



\title{Mass Conservative and Energy Stable Finite Difference Methods for the Quasi-incompressible Navier-Stokes-Cahn-Hilliard system: Primitive Variable and Projection-Type Schemes}



\author[uci]{Z.~Guo\corref{cor1}}
\ead{zhenling@math.uci.edu}
\author[ud]{P.~Lin\corref{cor1}}
\ead{plin@maths.dundee.ac.uk}
\author[uci]{J.~Lowengrub\corref{cor1}}
\ead{lowengrb@math.uci.edu}
\author[utk]{S.M.~Wise\corref{cor1}}
\ead{swise@math.utk.edu}

\cortext[cor1]{Corresponding author}
\address[uci]{Department of Mathematics, University of California, Irvine, CA, 92697}
\address[ud]{Department of Mathematics, University of Dundee, Dundee, UK, DD1 4HN}
\address[utk]{Department of Mathematics, University of Tennessee, Knoxville, TN 37996}

\begin{abstract}
In this paper we describe two fully mass conservative, energy stable, finite difference methods on a staggered grid for the quasi-incompressible Navier-Stokes-Cahn-Hilliard (q-NSCH) system governing a binary incompressible fluid flow with variable density and viscosity. Both methods, namely the primitive method (finite difference method in the primitive variable formulation) and the projection method (finite difference method in a projection-type formulation), are so designed that the mass of the binary fluid is preserved, and the energy of the system equations is always non-increasing in time at the fully discrete level. We also present an efficient, practical nonlinear multigrid method - comprised of a standard FAS method for the Cahn-Hilliard equation, and a method based on the Vanka-type smoothing strategy for the Navier-Stokes equation - for solving these equations. We test the scheme in the context of Capillary Waves, rising droplets and Rayleigh-Taylor instability. Quantitative comparisons are made with existing analytical solutions or previous numerical results that validate the accuracy of our numerical schemes. Moreover, in all cases, mass of the single component and the binary fluid was conserved up to $10^{-8}$ and energy decreases in time. 
\end{abstract}

\begin{keyword}
Energy Stability \sep Staggered Finite Differences \sep Multigrid \sep Binary fluid flow \sep Variable Density \sep Phase-field method.


\end{keyword}

\end{frontmatter}


\section{Introduction}
Phase-field, or diffuse-interface models \cite{Anderson1998, Kim2012}, have now emerged as a powerful method to simulate many types of multiphase flows, including drop coalescence, break-up, rising and deformations in shear flows \cite{Guo2014, Hua2011, Lee2001, Lee2002, LiuShen2002, Yue2004}, contact line dynamics \cite{Gao2012,Min-JCP-2014,Guo2015}, thermocapillary effects \cite{Guo2014JFM, Guo2013}, and tumor growth \cite{Oden2012,Oden2010}. Phase-field model are based on models of fluid free energy which goes back to the work of Cahn and collaborators \cite{CahnAllen1978,CahnHilliard1958}. The basic idea is to introduce a phase variable (order parameter) to characterize the different phases that varies continuously over thin interfacial layers and is mostly uniform in the bulk phases. Sharp interfaces are then replaced by the thin but nonzero thickness transition regions where the interfacial forces are smoothly but locally distributed in the bulk fluid. One set governing equations for the whole computational domain can be derived variationally from the free energy, where the order parameter fields satisfy an advection-diffusion equation (usually the advective Cahn-Hilliard equations) and is coupled to the Navier-Stokes equations through extra reactive stresses that mimic surface tension.\\
The classical phase-field model, the Model H \cite{Hohenberg1977}, was initially developed for simulating a binary incompressible fluid where components are density matched, and was later generalized for simulating binary incompressible fluids with variable density components \cite{Abels2012,Quasi-Aki-2014, Boyer2002, Ding2007,  Dong2012, JieSIAM2010,  Lowengrub1998, DG-discrete-2016, vanderZee2016}, in which some models, however, do not satisfy the Galilean invariance or are not thermodynamic consistency. As the phase-field model can be derived through a variational procedure, thermodynamic consistency of the model equations can serve as a justification for the model. In addition, this approach ensures the model compatible with the laws of thermodynamics, and to have a strict relaxational behavior of the free energy, hence the models are more than a phenomenological description of an interfacial problem. Lowengrub and Truskinovsky \cite{Lowengrub1998} and Abels et al. \cite{Abels2012} extended the Model H to a thermodynamically consistent model for variable density using two different modelling assumptions on the phase variable (mass concentration \cite{Lowengrub1998} or volume fraction \cite{Abels2012}) and the velocity field (mass averaged \cite{Lowengrub1998} or volume averaged velocity \cite{Abels2012}). Although the two models are developed to represent the same type of flow dynamics, the resulting equations have significant differences due to the underlying modeling choices. In particular, the quasi-incompressible NSCH model (q-NSCH) developed by Lowengrub and Truskinovsky \cite{Lowengrub1998} adopts a mass-averaged velocity, and the fluids are mixing at the interfacial region which generates the changes in density. Such a system was called quasi-incompressible, which leads to a (generally) non-solenoidal velocity field ($\boldsymbol{\nabla}\cdot \boldsymbol{u} \ne 0$ but was given through the quasi-incompressibility condition) and an extra pressure term appears in the Cahn-Hilliard equation comparing to Model H. In the model of Abels et. al. \cite{Abels2012}, a solenoidal (divergence-free) velocity field is obtained due to the volume-averaged mixture velocity modeling assumption. However the mass conservation equation of their model is modified by adding a mass correction term. Most recently, another quasi-incompressible phase-field model \cite{Zhao2017} was developed to study the binary fluid with variable density, where the volume fraction is employed as the phase variable leading to a different free energy.\\
Solving the q-NSCH model is quite a challenging problem. The CH equation is a fourth order nonlinear parabolic PDE, which contains an extra pressure term; the solution of the phase variable varies sharply through the thin diffuse-interface region where the velocity field is non-solenoidal; the variable density is a non-linear function of the phase variable; the NS and CH equations are strongly coupled, which further increases the mathematical complexity of the model and that makes it difficult to design provably stable numerical schemes. Recently, it has been reported that thermodynamic consistency can serve as not only a critical justification for the phase-field modeling, but also an important criterion for the design of numerical methods. When the thermodynamic consistency is preserved at the discrete level, it guarantees the energy stability of the numerical method and also the accuracy of the solution, especially for the case where a rapid change or a singularity occurs in the solution, such as occurs in non-Newtonian hydrodynamic systems \cite{Lin2006, Lin2007}. Therefore it is highly desirable to design such an energy stable method for the q-NSCH model, which dissipates the energy (preserve thermodynamic consistency) at the discrete level. Many time-discrete or fully discrete level energy stable methods \cite{Ying-Multigrid, Han2016,Han2015,JieSIAM2010,Lin2007,CSY-2015} have been presented for the other types of NSCH models for binary incompressible fluid with the solenoidal velocity field. However for the q-NSCH model presented by Lowengrub and Truskinovsky \cite{Lowengrub1998} or the other quasi-incompressible type models with the non-solenoidal velocity, relatively few time-discrete energy stable methods are available \cite{Giesselmann2013,vanderZee2016,Guo2014}. Very recently, a $C^0$ finite element method for the q-NSCH system with a consistent discrete energy law was presented by Guo et. al. \cite{Guo2014}. where interface topological transitions are captured and the quasi-incompressibility is handled smoothly. At the fully discrete level, however, there are no available energy stable numerical methods for the q-NSCH model.\\
Another important criterion for the method design is to guarantee mass conservation of the binary incompressible fluid at the fully discrete level. Due to appearance of the diffusion term and numerical dissipation introduced in discretization of the convective term in the Cahn-Hilliard equation, the total mass of the binary fluid is usually not preserved exactly. This phenomenon has been reported in several works \cite{mass2015,mass2007}, where the phase-field models were used to study the binary incompressible fluid. The mass loss can get even worse for binary fluid flows with large density ratios, as large numerical dissipation is needed to obtain a stable solution. To handle the issue of mass loss, usually the fine grids and small thickness of diffuse-interfaces are used in the phase-field model to improve mass conservation \cite{Aland2011, Ding2014}. Another way to compensate the mass loss is to add the extra mass correction terms into the CH equation \cite{mass2015}. However, this may need additional efforts for correcting the order parameter at each time step, and the energy stability of the numerical methods can be hardly maintained due to the artificial mass correction term.\\
In the present paper, we develop two fully mass conservative, energy stable, staggered grid finite difference methods for the q-NSCH model. The temperal and space discretization for both methods are so designed that the mass for the binary fluid is preserved naturally at the fully discrete level, and the extra artificial mass correction is not required. Moreover, both methods are energy stable at the fully discrete level. Our first method, primitive method, that uses a primitive variable formulation is based on the Vanka-type smoother \cite{Vanka1986}, where the momentum and continuity equations are coupled implicitly and the velocities and pressures are updated simultaneously in a linear sense. Our second method that uses a projection-type formulation was originated from the work by A. Chorin \cite{Projection-1} for solving the Navier-Stokes equations. The key advantage of the projection method is its efficiency such that the computations of the velocity and the pressure fields are decoupled. Our projection method differs from the traditional projection method in that the latter method usually uses the pressure to project the intermediate velocity onto a space of divergence-free velocity field \cite{JieSIAM2010}, whereas we enforce the quasi-incompressible condition instead. To the best of the author's knowledge, for the q-NSCH model, our two finite difference methods which preserve the mass and meanwhile are energy stable at the fully discrete level are new. To solve the schemes efficiently, we design a practical nonlinear multigrid solver - comprised of a standard FAS method for the CH equation, and a method based on the Vanka-type smoothing strategy for the NS equation - for solving these equations. 
 \\
The rest of the paper is organized as follows: in \S 2, we introduce the q-NSCH model and its non-dimensionaliz-ation. For convenience of the numerical design later, we reformulate the system and show the mass conservation and the energy law (thermodynamic consistency) of the reformulated model in \S 3. In \S 4 we present our two numerical methods, namely the primitive method and projection method, at the time-discrete level, and we demonstrate the mass conservation property and energy stability of both methods. In \S 5, we introduce some basic definitions and notations for the finite difference discretization on a staggered grid. In \S 6, we descretize in space and we show that for both methods the mass conservation and energy stability can be achieved at the fully discrete level. In \S 7, we introduce a non-linear multigrid solver for our fully discrete numerical schemes. In \S 8, we test our methods and compare with the existing results. In \S 9 we present a convergence test for both methods. \S 10 is the conclusion. Moreover, the extra notations for the finite difference discretization and some useful propositions are listed in the Appendix A. The multigrid solver is briefly introduced in B.
\section{Quasi-Incompressible NSCH System}
\subsection{Dimensional System Equations}
As derived in \cite{Lowengrub1998}, the q-NSCH system governing a binary incompressible fluid with variable density and viscosity is
\begin{align}
\rho\boldsymbol{u}_{t}+\rho\boldsymbol{u}\cdot \boldsymbol{\nabla}\boldsymbol{u}=&-\boldsymbol{\nabla} p - \eta\epsilon\sigma \boldsymbol{\nabla} \cdot (\rho \boldsymbol{\nabla} c \otimes \boldsymbol{\nabla} c )+\boldsymbol{\nabla} \cdot \big(\mu(c) (\boldsymbol{\nabla} \boldsymbol{u}+(\boldsymbol{\nabla} \boldsymbol{u})^{\rm T})\big)\notag\\
&+\boldsymbol{\nabla}\big(\lambda(c)(\boldsymbol{\nabla} \cdot \boldsymbol{u}) \big)-\rho g \boldsymbol{j},\label{dim-mom}\\
\boldsymbol{\nabla} \cdot \boldsymbol{u}=&\alpha\boldsymbol{\nabla} \cdot (m(c)\boldsymbol{\nabla}\mu_c),\label{dim-mass}\\
\rho c_{t}+ \rho\boldsymbol{u}\cdot \boldsymbol{\nabla}c=&\boldsymbol{\nabla} \cdot(m(c) \boldsymbol{\nabla} \mu_{c}),\label{dim-phase}\\
\mu_{c}=&\frac{\eta\sigma}{\epsilon} f(c)-\frac{\partial \rho}{\partial c}\cfrac{ p}{\rho^2} -\frac{\eta\epsilon\sigma}{\rho}\boldsymbol{\nabla} \cdot (\rho\boldsymbol{\nabla} c ). \label{dim-chem}
\end{align}
Here $\boldsymbol{u}$ is the velocity, $p$ is the pressure, $g$ is the gravity, $c$ is the phase variable (mass concentration), $\mu_{c}$ is the chemical potential, $f(c)=F'(c)$, and $F(c)=c^2(c-1)^2/4$ is the double-well potential, $m(c)=\sqrt{ c^2(1-c)^2}$ is the variable mobility, $\rho=\rho(c)=\rho_{1}\rho_{2}/((\rho_2-\rho_1)c+\rho_1)$ is the variable density for the binary fluid, $\rho_{1}$ and $\rho_{2}$ are the constant densities for the two incompressible fluids, $\alpha=(\rho_{2}-\rho_{1})/\rho_{1}\rho_{2}$ is a constant such that $\rho'=-\alpha \rho^2$, $\mu (c)=\mu_{1}\mu_{2}/((\mu_{2}-\mu_{1})c+\mu_1)$ is the variable viscosity, $\mu_{1}$ and $\mu_{2}$ are the constant viscosities of the two fluids, $\lambda(c)=-\frac{2}{3}\mu(c)$, $\epsilon$ is a small parameter that is related to the thickness of the diffuse-interface, $\sigma$ is the surface tension from the sharp interface model, $\eta$ is a ratio parameter that relates the phase-field model and sharp interface model \cite{Guo2014JFM}. \\
The no-slip boundary condition is imposed for the velocity field
\begin{align}
\boldsymbol{u }\rvert_{\partial \Omega} =\boldsymbol{u}_{\partial\Omega},\label{cons-bc}
\end{align}
and the Neumann boundary conditions are imposed for the phase-field variables, 
\begin{align}
 \boldsymbol{n}\cdot\boldsymbol{\nabla } c \rvert_{\partial \Omega} =  \boldsymbol{n}\cdot\boldsymbol{\nabla } \mu_{c}\rvert_{\partial \Omega} =0,\label{cons-bc2}
\end{align}
where $\boldsymbol{n}$ is the normal vector pointing out of the physical domain $\Omega$.\\
Note that, by multiplying (\ref{dim-mass}) with $1/\alpha$ and substituting into (\ref{dim-phase}), we obtain,
\begin{align}
\alpha\rho c_{t}+\alpha \rho\boldsymbol{u}\cdot \boldsymbol{\nabla}c=\boldsymbol{\nabla} \cdot \boldsymbol{u}\label{cc-11-1}
\end{align}
Multiplying the above equation by $-\alpha \rho$ and using the definition of $\alpha$, we obtain 
\begin{align}
\rho_t+\boldsymbol{\nabla} \cdot (\rho \boldsymbol{u}) = 0,\label{contiti}
\end{align}
which shows that the q-NSCH system (\ref{dim-mom})-(\ref{dim-chem}) satisfies the mass conservation. Note that this equation is also required to reformulate the system equation in \S 3.
\subsection{Non-dimentionalization}
Let $L_{*}$ and $U_{*}$ denote the characteristic scales of length and velocity, we then introduce the dimensionless independent variables: $\hat{x}=\boldsymbol{x}/L_{*}$, $\hat{\boldsymbol{u}} = \boldsymbol{u}/U_{*}$, $\hat{t} = tU_{*}/L_{*}$, and the following natural scaling of the dependent variables: $\hat{p} = \rho_{*}{\mu_{c}}_{*} p_{l}$, $\hat{\rho} = \rho_{l}/\rho_{*}$, $\hat{\mu} = \mu_{l}/\mu_{*}$, $\hat{\mu}_{c} = \mu_{c}/{\mu_{c}}_{*}$, $\hat{\eta} = \eta_{*}/\rho_{l}$, $\hat{\epsilon} = \epsilon/L_*$ and $\bar{m}(c)=m_{l}/m_{*}(c)$, where the subscripts denote characteristic quantities.  Omitting the hat notation, the non-dimensional q-NSCH model is
\begin{align}
\rho\boldsymbol{u}_{t}+\rho\boldsymbol{u}\cdot \boldsymbol{\nabla}\boldsymbol{u}=&-\cfrac{1}{M}\boldsymbol{\nabla} p -\cfrac{\eta\epsilon}{We}\boldsymbol{\nabla} \cdot (\rho \boldsymbol{\nabla} c \otimes \boldsymbol{\nabla} c )+\cfrac{1}{Re}\boldsymbol{\nabla} \cdot\big(\mu(c) \boldsymbol{\nabla} \boldsymbol{u}\big)\notag\\
&+\cfrac{1}{3Re}\boldsymbol{\nabla}\big(\mu(c)(\boldsymbol{\nabla} \cdot \boldsymbol{u}) \big)-\cfrac{\rho}{Fr} \boldsymbol{j},\label{nd-mom-before1}\\
\boldsymbol{\nabla} \cdot \boldsymbol{u}=&\cfrac{\alpha}{Pe}\boldsymbol{\nabla} \cdot\big( m(c)\boldsymbol{\nabla} \mu_{c}\big),\label{nd-mass}\\
\rho c_{t}+ \rho\boldsymbol{u}\cdot \boldsymbol{\nabla}c=&\cfrac{1}{Pe}\boldsymbol{\nabla} \cdot\big( m(c)\boldsymbol{\nabla} \mu_{c}\big),\label{nd-phase}\\
\mu_{c}=&\cfrac{ M\eta}{\epsilon We} f(c)-\cfrac{\partial \rho}{\partial c}\cfrac{ p}{\rho^2} -\cfrac{M\eta\epsilon}{ We}\cfrac{1}{\rho}\boldsymbol{\nabla} \cdot (\rho\boldsymbol{\nabla} c ), \label{nd-chem}
\end{align}
where $M=U_{*}^2/{\mu_{c}}_{*}$ is an analogue of the squared Mach number measuring the relative strength of the kinetic energy to chemical energy \cite{Lowengrub1998}, $We = \rho_{*} U_{*}^2L_{*}/\sigma$ is the Weber number, $Re=\rho_{*}L_{*}U_{*}/\mu_{l}$ is the classical Reynolds number, $Fr=U_{*}^2/gL_{*}$ is the Froude number, $Pe=\rho U_{*} L_{*}/m_{*} {\mu_{c}}_{*}$ is the diffusional Peclet number.\\
Note that the sharp-interface limit analysis is carried out in \cite{Lowengrub1998, Guo2014JFM} to show the convergence of the q-NSCH model. In particular, as the thickness of the diffuse interface approached to 0 ($\epsilon\rightarrow 0$), the q-NSCH reduces to the classical sharp-interface model for binary incompressible fluids. We will show this convergence property through the numerical simulations in \S \ref{sim-sec}.
\section{Reformulation of the system equations}
For convenience of the numerical scheme design later, we follow the same strategy used in \cite{Guo2014} to reformulate the q-NSCH system (\ref{nd-mom-before1})-(\ref{nd-chem}) to obtain
\begin{align}
&\rho\boldsymbol{u}_t+\rho(\boldsymbol{u}\cdot \boldsymbol{\nabla} )\boldsymbol{u}+\cfrac{1}{2}\rho_t\boldsymbol{u}+\cfrac{1}{2}\boldsymbol{\nabla} \cdot (\rho \boldsymbol{u})\boldsymbol{u}=-\cfrac{1}{M}\boldsymbol{\nabla}\bar{p}+\cfrac{1}{M}\rho \bar{\mu}_{c}\boldsymbol{\nabla} c\notag\\
&+\cfrac{1}{Re}\boldsymbol{\nabla} \cdot (\mu(c) \boldsymbol{\nabla} \boldsymbol{u})+\cfrac{1}{3Re}\boldsymbol{\nabla}\big(\mu(c)(\boldsymbol{\nabla} \cdot \boldsymbol{u}) \big)-\cfrac{\rho}{Fr} \boldsymbol{j},\label{nond-mom}\\
&\boldsymbol{\nabla} \cdot \boldsymbol{u}=\cfrac{\alpha}{Pe}\boldsymbol{\nabla} \cdot\big( m(c)\boldsymbol{\nabla}  \bar{\mu}_{c}\big)+\cfrac{\alpha^2 }{Pe}\boldsymbol{\nabla}\cdot\big(  m(c) \boldsymbol{\nabla} \bar{p}\big),\label{nond-mass}\\
&\rho c_t+\rho(\boldsymbol{u}\cdot \boldsymbol{\nabla} )c=\cfrac{1}{Pe}\boldsymbol{\nabla} \cdot\big( m(c)\boldsymbol{\nabla}  \bar{\mu}_{c}\big)+\cfrac{\alpha }{Pe}\boldsymbol{\nabla}\cdot\big(  m(c) \boldsymbol{\nabla} \bar{p}\big),\label{nond-phase}\\
&\rho\bar{\mu}_{c}=\cfrac{M\eta}{\epsilon We}\rho f(c)+\cfrac{M\eta}{\epsilon We}\cfrac{\partial \rho}{\partial c} F(c)+\cfrac{M\eta\epsilon}{We} \cfrac{\partial \rho}{\partial c} \cfrac{1}{2}(\boldsymbol{\nabla} c \cdot \boldsymbol{\nabla} c) - \cfrac{M\eta\epsilon}{We} \boldsymbol{\nabla} \cdot (\rho\boldsymbol{\nabla} c ).\label{nond-chem}
\end{align} 
Here we have defined a new pressure $\bar{p}$ and a new chemical potential $\bar{\mu}_{c}$, such that
\begin{align}
\bar{p}&=p+\cfrac{\eta M}{\epsilon We}\rho F(c) +\cfrac{\eta \epsilon M}{We}\cfrac{\rho}{2} \boldsymbol{\nabla} c\cdot \boldsymbol{\nabla} c,~~~~{\rm and}~~~~\bar{\mu}_{c}=\mu_{c}+\cfrac{\partial \rho}{\partial c} \cfrac{\bar{p}}{\rho^2}=\mu_{c}-\alpha  \bar{p}.\label{new-mu}
\end{align} 
Note that a zero term $\frac{1}{2}\rho_t\boldsymbol{u}+\frac{1}{2}\boldsymbol{\nabla} \cdot (\rho \boldsymbol{u})\boldsymbol{u}$ has been added to the momentum equation, which can be seen as the multiplication of the continuity equation (\ref{contiti}) and the velocity $\boldsymbol{u}$. The reason is that the test functions in deriving the
energy law can be made much simpler (See also Remark 3.3 below) and thus make it easier to achieve the energy stable
finite difference method.\\
We now show that, in the time-continuous and space-continuous level, the reformulated non-dimensional system (\ref{nond-mom})-(\ref{nond-chem}) satisfies the Mass conservation for the binary fluid and also the fluid components. In order to show these properties, we will only consider the homogeneous boundary condition for $\boldsymbol{u}_{\partial\Omega}=0$, such that, the boundary terms that are originated from the integration by parts can be dropped by using the homogeneous boundary conditions.
\begin{thm}
The non-dimensional q-NSCH system (\ref{nond-mom})-(\ref{nond-chem}) preserve the mass of the binary fluid $\rho$ and the fluid components $\rho c$, ${\rm i.e}$, 
\begin{align}
\int_{\Omega}\rho_{t}~{\rm d}\boldsymbol{x}& =0,~~~~{\rm and}~~~~\int_{\Omega}(\rho c)_{t}~{\rm d}\boldsymbol{x} =0.
\end{align}
\end{thm}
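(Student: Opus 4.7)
The plan is to reduce the theorem to the continuity-type identity $\rho_t + \boldsymbol{\nabla}\cdot(\rho\boldsymbol{u}) = 0$ that was already observed for the original (non-reformulated) system in (\ref{contiti}), and then apply the divergence theorem together with the homogeneous boundary conditions (\ref{cons-bc})--(\ref{cons-bc2}).

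First I would recover (\ref{contiti}) directly from the reformulated equations. The quasi-incompressibility relation (\ref{nond-mass}) and the advective Cahn--Hilliard equation (\ref{nond-phase}) both contain the same diffusive fluxes up to a factor of $\alpha$: subtracting $\alpha\cdot(\ref{nond-phase})$ from (\ref{nond-mass}) cancels the $\bar{\mu}_c$ and $\bar{p}$ flux terms on the right-hand side and produces
\begin{equation*}
\boldsymbol{\nabla}\cdot\boldsymbol{u} = \alpha\rho c_t + \alpha\rho\,\boldsymbol{u}\cdot\boldsymbol{\nabla} c,
\end{equation*}
which is exactly the identity (\ref{cc-11-1}) used in the dimensional derivation. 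Multiplying by $-\alpha\rho$ and using $\rho'(c) = -\alpha\rho^2$ (so that $\rho_t = -\alpha\rho^2 c_t$ and $\boldsymbol{u}\cdot\boldsymbol{\nabla}\rho = -\alpha\rho^2\boldsymbol{u}\cdot\boldsymbol{\nabla} c$) gives $\rho_t + \boldsymbol{\nabla}\cdot(\rho\boldsymbol{u}) = 0$ in the reformulated setting as well. Integrating over $\Omega$ and applying the divergence theorem with $\boldsymbol{u}|_{\partial\Omega} = 0$ yields $\int_\Omega \rho_t\,d\boldsymbol{x} = 0$.

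For the second identity I would rewrite (\ref{nond-phase}) in conservative form by adding $c\,[\rho_t + \boldsymbol{\nabla}\cdot(\rho\boldsymbol{u})] = 0$ to its left-hand side, which turns $\rho c_t + \rho\boldsymbol{u}\cdot\boldsymbol{\nabla} c$ into $(\rho c)_t + \boldsymbol{\nabla}\cdot(\rho c\boldsymbol{u})$. Thus (\ref{nond-phase}) becomes
\begin{equation*}
(\rho c)_t + \boldsymbol{\nabla}\cdot(\rho c\,\boldsymbol{u}) = \frac{1}{Pe}\boldsymbol{\nabla}\cdot\bigl(m(c)\boldsymbol{\nabla}\bar{\mu}_c\bigr) + \frac{\alpha}{Pe}\boldsymbol{\nabla}\cdot\bigl(m(c)\boldsymbol{\nabla}\bar{p}\bigr).
\end{equation*}
Integrating over $\Omega$, every term on the right is a pure divergence, and I would use $\boldsymbol{u}|_{\partial\Omega} = 0$ to kill the advective flux while invoking the Neumann-type conditions on $\bar{\mu}_c$ and $\bar{p}$ (inherited from (\ref{cons-bc2}) together with the definitions in (\ref{new-mu})) to kill the diffusive fluxes. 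This delivers $\int_\Omega (\rho c)_t\,d\boldsymbol{x} = 0$.

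The only delicate point, and the step I would be most careful about, is the reduction from the reformulated system back to the plain continuity equation: the cancellation requires the specific coefficient pattern $(1,\alpha)$ versus $(\alpha,\alpha^2)$ in (\ref{nond-mass})--(\ref{nond-phase}), so I would write that linear combination out explicitly. After that, everything else is a straightforward divergence-theorem argument using the prescribed boundary data, and no further estimates are needed.
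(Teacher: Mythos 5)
Your proposal is correct and follows essentially the same route as the paper: the linear combination of (\ref{nond-mass}) and (\ref{nond-phase}) that cancels the diffusive fluxes, multiplication by $-\alpha\rho$ with $\rho'=-\alpha\rho^2$ to recover $\rho_t+\boldsymbol{\nabla}\cdot(\rho\boldsymbol{u})=0$, and then adding $c$ times this continuity equation to (\ref{nond-phase}) to put it in conservative form before integrating. The only cosmetic difference is that the paper explicitly recombines the $\bar{\mu}_c$ and $\bar{p}$ fluxes into $\tfrac{1}{Pe}\boldsymbol{\nabla}\cdot(m(c)\boldsymbol{\nabla}\mu_c)$ via (\ref{new-mu}) so that the stated Neumann condition on $\mu_c$ applies directly, which is exactly the point you flag as "inherited from (\ref{cons-bc2}) together with (\ref{new-mu})."
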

\begin{proof} Using the same scheme for (\ref{cc-11-1}), we obtain the continuity equation of the system
\begin{align}
\rho_{t}+\boldsymbol{\nabla}\cdot (\rho\boldsymbol{u})=0.\label{mass-con-conts}
\end{align}
Taking integration of Eq.(\ref{mass-con-conts}) over $\Omega$ with the help of the homogeneous boundary condition of $\boldsymbol{u}$ in Eq.(\ref{cons-bc}), we obtain the mass conservation for the binary fluid $\int_{\Omega}\rho_{t}{\rm d}\boldsymbol{x}=0$. Multiplying Eq.(\ref{mass-con-conts}) by $c$, and adding to Eq.(\ref{nond-phase}), we obtain
\begin{align}
&(\rho c)_t+\boldsymbol{\nabla}\cdot(\rho\boldsymbol{u}  c)=\cfrac{1}{Pe}\boldsymbol{\nabla} \cdot\big( m(c)\boldsymbol{\nabla}  \bar{\mu}_{c}\big)+\cfrac{\alpha }{Pe}\boldsymbol{\nabla}\cdot\big(  m(c) \boldsymbol{\nabla} \bar{p}\big)=\cfrac{1}{Pe}\boldsymbol{\nabla} \cdot\big( m(c)\boldsymbol{\nabla}  \mu_{c}\big).\label{smass-con-conts}
\end{align}
Taking integration of Eq.(\ref{smass-con-conts}) over $\Omega$ with the help of the homogeneous boundary condition (\ref{cons-bc2}), we obtain the mass conservation for the single fluid $\int_{\Omega}(\rho c)_{t}{\rm d}\boldsymbol{x}=0$.
\end{proof}
\begin{thm}
The non-dimensional q-NSCH system (\ref{nond-mom})-(\ref{nond-chem}) is energy stable, namely the system equations satisfy the following energy dissipation law:
\begin{align}
\cfrac{dE}{dt}=& \cfrac{d}{dt}\bigg(\cfrac{1}{2}\rVert\sqrt{\rho}\boldsymbol{u}\rVert^2_{L^{2}}+ \cfrac{\eta\epsilon}{2We}\rVert\sqrt{\rho}~\boldsymbol{\nabla} c\rVert^2_{L^{2}}+\int_{\Omega}\big(\cfrac{\eta}{\epsilon We}\rho F(c) +\cfrac{1}{Fr}\rho y \big){\rm d}\boldsymbol{x}\bigg)\notag\\
=&-\cfrac{1}{Re} \rVert\sqrt{\mu(c)}\boldsymbol{\nabla} \boldsymbol{u}\rVert^2_{L^2} -\cfrac{1}{3Re}\rVert\sqrt{\mu(c)}\boldsymbol{\nabla} \cdot \boldsymbol{u}\rVert^2_{L^2}-\cfrac{1}{MPe} \rVert\sqrt{m(c)}\boldsymbol{\nabla} \mu_{c} \rVert^2_{L^2}\le 0,\label{energy-law-continuous}
\end{align}
where $E$ is the total energy of the binary fluid, and $||\cdot||_{L^2}$ denotes the norm of $L^2(\Omega)$ in Sobolev spaces.
\end{thm}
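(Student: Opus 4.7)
The plan is to derive the energy law by testing each of the four PDEs with a carefully chosen function, summing, and exploiting cancellations that are built into the reformulated model. All boundary contributions vanish due to the homogeneous conditions (\ref{cons-bc})–(\ref{cons-bc2}), so I will silently drop them throughout.

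First I would take the $L^2$ inner product of the momentum equation (\ref{nond-mom}) with $\boldsymbol{u}$. The whole point of inserting the zero quantity $\tfrac{1}{2}\rho_t\boldsymbol{u}+\tfrac{1}{2}\boldsymbol{\nabla}\cdot(\rho\boldsymbol{u})\boldsymbol{u}$ into the momentum equation is that, combined with the usual identity $\int\rho(\boldsymbol{u}\cdot\boldsymbol{\nabla})\boldsymbol{u}\cdot\boldsymbol{u}=-\tfrac{1}{2}\int\boldsymbol{\nabla}\cdot(\rho\boldsymbol{u})|\boldsymbol{u}|^2$, the four transport-type terms collapse to $\tfrac{d}{dt}\int\tfrac{1}{2}\rho|\boldsymbol{u}|^2$ without having to invoke continuity at the level of test functions. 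Integration by parts on the viscous terms yields the two dissipation quantities $-\tfrac{1}{Re}\|\sqrt{\mu}\boldsymbol{\nabla}\boldsymbol{u}\|^2$ and $-\tfrac{1}{3Re}\|\sqrt{\mu}\boldsymbol{\nabla}\cdot\boldsymbol{u}\|^2$, while the pressure term becomes $\tfrac{1}{M}\int\bar{p}\boldsymbol{\nabla}\cdot\boldsymbol{u}$, which I then rewrite using the mass equation (\ref{nond-mass}) so that it reads $-\tfrac{\alpha}{MPe}\int m(c)\boldsymbol{\nabla}\bar{p}\cdot\boldsymbol{\nabla}\bar{\mu}_c-\tfrac{\alpha^2}{MPe}\int m(c)|\boldsymbol{\nabla}\bar{p}|^2$. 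The gravity term will be handled later via continuity: $\tfrac{1}{Fr}\tfrac{d}{dt}\int\rho y=-\tfrac{1}{Fr}\int\boldsymbol{\nabla}\cdot(\rho\boldsymbol{u})y=\tfrac{1}{Fr}\int\rho u_y$, which matches exactly the $-\tfrac{\rho}{Fr}\boldsymbol{j}\cdot\boldsymbol{u}$ contribution.

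Next I would test the phase equation (\ref{nond-phase}) with $\bar{\mu}_c/M$. The convective contribution on the left, $\tfrac{1}{M}\int\rho(\boldsymbol{u}\cdot\boldsymbol{\nabla})c\,\bar{\mu}_c$, cancels exactly with the cross-coupling $\tfrac{1}{M}\int\rho\bar{\mu}_c\boldsymbol{\nabla} c\cdot\boldsymbol{u}$ produced by the capillary force on the right of the momentum test, which is the key coupling cancellation in the scheme. The right-hand side gives $-\tfrac{1}{MPe}\int m(c)|\boldsymbol{\nabla}\bar{\mu}_c|^2-\tfrac{\alpha}{MPe}\int m(c)\boldsymbol{\nabla}\bar{p}\cdot\boldsymbol{\nabla}\bar{\mu}_c$. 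Adding this to the momentum identity, the mixed $\boldsymbol{\nabla}\bar{p}$, $\boldsymbol{\nabla}\bar{\mu}_c$ terms assemble into a perfect square: using $\mu_c=\bar{\mu}_c+\alpha\bar{p}$ from (\ref{new-mu}), one has $|\boldsymbol{\nabla}\mu_c|^2=|\boldsymbol{\nabla}\bar{\mu}_c|^2+2\alpha\boldsymbol{\nabla}\bar{p}\cdot\boldsymbol{\nabla}\bar{\mu}_c+\alpha^2|\boldsymbol{\nabla}\bar{p}|^2$, producing the clean dissipation $-\tfrac{1}{MPe}\|\sqrt{m(c)}\boldsymbol{\nabla}\mu_c\|^2$ stated in the theorem.

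The remaining task is to interpret $\tfrac{1}{M}\int\rho\bar{\mu}_c c_t$, which I obtain by multiplying the chemical potential identity (\ref{nond-chem}) by $c_t/M$ and integrating. Using $\rho_t=\partial_c\rho\,c_t$, I recognize $\rho f(c)c_t+\partial_c\rho F(c)c_t=\partial_t(\rho F(c))$ to generate the bulk free-energy rate $\tfrac{\eta}{\epsilon We}\tfrac{d}{dt}\int\rho F(c)$. For the last two terms, I integrate the Laplacian term by parts against $c_t$, so it becomes $\tfrac{\eta\epsilon}{We}\int\rho\boldsymbol{\nabla} c\cdot\boldsymbol{\nabla} c_t$. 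Writing $\tfrac{d}{dt}\int\tfrac{1}{2}\rho|\boldsymbol{\nabla} c|^2=\tfrac{1}{2}\int\partial_c\rho\,c_t|\boldsymbol{\nabla} c|^2+\int\rho\boldsymbol{\nabla} c\cdot\boldsymbol{\nabla} c_t$ shows that the $\partial_c\rho\,c_t|\boldsymbol{\nabla} c|^2$ contribution precisely cancels the middle term of (\ref{nond-chem}), leaving $\tfrac{\eta\epsilon}{2We}\tfrac{d}{dt}\int\rho|\boldsymbol{\nabla} c|^2$.

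I expect the main obstacle to be bookkeeping rather than any deep idea: three delicate cancellations must all work simultaneously — the $\rho\bar{\mu}_c\boldsymbol{u}\cdot\boldsymbol{\nabla} c$ cross-coupling between momentum and phase, the completion of the square $(\alpha\boldsymbol{\nabla}\bar{p}+\boldsymbol{\nabla}\bar{\mu}_c)^2=|\boldsymbol{\nabla}\mu_c|^2$ between momentum, mass, and phase, and the cancellation of the $\partial_c\rho\,c_t|\boldsymbol{\nabla} c|^2$ terms inside the chemical potential step. If any sign or prefactor is off, the clean dissipation structure breaks, so I would verify each cancellation before collecting terms. Once all these check out, gathering everything and moving the total-derivative parts to the left yields exactly the stated energy identity (\ref{energy-law-continuous}), with the right-hand side manifestly non-positive since $\mu(c),m(c)\ge 0$.
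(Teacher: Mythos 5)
Your proposal is correct and follows essentially the same route as the paper's proof: test the momentum equation with $\boldsymbol{u}$, the quasi-incompressibility equation with $\bar{p}/M$ (you substitute it into the pressure term rather than adding it as a separate zero identity, which is algebraically identical), the phase equation with $\bar{\mu}_c/M$, and the chemical potential identity with $c_t/M$, then sum and exploit the cross-coupling cancellation and the completion of the square $|\boldsymbol{\nabla}\bar{\mu}_c+\alpha\boldsymbol{\nabla}\bar{p}|^2=|\boldsymbol{\nabla}\mu_c|^2$. You in fact spell out several cancellations (the $\partial_c\rho\,c_t$ bookkeeping in the free-energy terms and the assembly of $-\tfrac{1}{MPe}\|\sqrt{m(c)}\boldsymbol{\nabla}\mu_c\|^2$) that the paper leaves implicit in ``summing up the four equations.''
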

\begin{pf}
Multiplying Eq.(\ref{nond-mom}) by $\boldsymbol{u}$, using integration-by-parts and dropping boundary terms, we obtain
\begin{align}
\cfrac{d}{dt}\bigg(\cfrac{1}{2}\rVert\sqrt{\rho}\boldsymbol{u}\rVert^2_{L^{2}}+\int_{\Omega}\big(\cfrac{1}{Fr}\rho y \big){\rm d}\boldsymbol{x}\bigg)&=\int_{\Omega}\bigg(\cfrac{1}{M}(\boldsymbol{\nabla} \cdot \boldsymbol{u}) \bar{p} +\cfrac{1}{M}\rho \bar{\mu}_{c}\boldsymbol{u}\cdot \boldsymbol{\nabla} c\bigg){\rm d}\boldsymbol{x}\notag\\
&\hspace{-10mm}-\cfrac{1}{Re} \rVert\sqrt{\mu(c)}\boldsymbol{\nabla} \boldsymbol{u}\rVert^2_{L^2} -\cfrac{1}{3Re}\rVert\sqrt{\mu(c)}\boldsymbol{\nabla} \cdot \boldsymbol{u}\rVert^2_{L^2},\label{nnewvelocity-12}
\end{align}
where we have used the continuity equation (\ref{mass-con-conts}), the homogeneous boundary condition (\ref{cons-bc}), the following identities
\begin{align}
\int_{\Omega}\rho \boldsymbol{u}\cdot\boldsymbol{j} &~{\rm d}\boldsymbol{x}=\int_{\Omega}\rho  \boldsymbol{u}\cdot \boldsymbol{\nabla }y~{\rm d}\boldsymbol{x}=\int_{\Omega}-\boldsymbol{\nabla }\cdot(\rho  \boldsymbol{u}) y~{\rm d}\boldsymbol{x} =\int_{\Omega}\rho_{t}  y~{\rm d}\boldsymbol{x},\label{gravity-e}\\
&\int_{\Omega}\bigg(\rho (\boldsymbol{u}\cdot \boldsymbol{\nabla} )\cfrac{1}{2}(\boldsymbol{u}\cdot \boldsymbol{u})+\cfrac{1}{2}(\boldsymbol{u}\cdot \boldsymbol{u})\boldsymbol{\nabla} \cdot (\rho \boldsymbol{u})\bigg){\rm d}\boldsymbol{x}=0.\label{homo-bc-u-1}
\end{align}
Multiplying Eq.(\ref{nond-mass}) by $\bar{p}/M$ and using integration-by-parts, we obtain
\begin{align}
&\int_{\Omega}0~{\rm d}\boldsymbol{x}=\int_{\Omega}\bigg(-\cfrac{1}{M}(\boldsymbol{\nabla} \cdot \boldsymbol{u})\bar{p}-\cfrac{\alpha}{PeM}m(c)\boldsymbol{\nabla} \bar{\mu}_{c}\cdot \boldsymbol{\nabla} \bar{p}-\cfrac{\alpha^2}{PeM}m(c)\boldsymbol{\nabla} \bar{p}\cdot \boldsymbol{\nabla} \bar{p}\bigg){\rm d}\boldsymbol{x}.\label{nnewmass-2}
\end{align} 
Multiplying Eq.(\ref{nond-phase}) by $\bar{\mu}_{c}/M$ and using integration-by-parts, we obtain
\begin{align}
0&=\int_{\Omega}\bigg(-\cfrac{1}{M}\rho c_{t}\bar{\mu}_{c}-\cfrac{1}{M} \rho \bar{\mu}_{c} \boldsymbol{u} \cdot\boldsymbol{\nabla}  c -\cfrac{1}{PeM}m(c)\boldsymbol{\nabla} \bar{\mu}_{c}\cdot \boldsymbol{\nabla} \bar{\mu}_{c}\notag\\
&-\cfrac{\alpha}{PeM}m(c)\boldsymbol{\nabla} \bar{\mu}_{c} \cdot\boldsymbol{\nabla} \bar{p}\bigg){\rm d}\boldsymbol{x}.\label{nnewqua3-133}
\end{align}
Multiplying Eq.(\ref{nond-chem}) by $c_t/M$ and using integration-by-parts, we obtain
\begin{align}
& \cfrac{d}{dt}\bigg( \cfrac{\eta\epsilon}{2We}\rVert\sqrt{\rho}~\boldsymbol{\nabla} c\rVert^2_{L^{2}}+\int_{\Omega}\big(\cfrac{\eta}{\epsilon We}\rho F(c)\big){\rm d}\boldsymbol{x}\bigg)=\int_{\Omega}\cfrac{1}{M}\rho c_{t}\bar{\mu}_{c}{\rm d}\boldsymbol{x}.\label{nnqua4-122}
\end{align} 
Summing up the four equations, (\ref{nnewvelocity-12}) and (\ref{nnewmass-2})-(\ref{nnqua4-122}), we obtain the energy dissipation law (\ref{energy-law-continuous}) of the continuous system equations.
\end{pf}
\begin{rmk}
Note that the original non-dimensional q-NSCH system (\ref{nd-mom-before1})-(\ref{nd-chem}) also satisfies the energy law (\ref{energy-law-continuous}), which requires, however, much more complicated test functions to derive the energy law. Our reformulation makes the derivation easier with relatively simpler testing functions being used.
\end{rmk}

\section{Time Discrete, Mass Conservative and Energy Stable Schemes}
We now present two time-discrete methods, namely the primitive method (using the primitive variable formulation), and the projection method (using a projection-type formulation). Both methods are mass conservative and energy stable. In the projection method, we show that there is a pressure-Poisson equation naturally occurring in the reformulated system equation that can be used for solving the pressure. This differs from the traditional projection method in that the latter requires constructing an extra pressure-Poisson equation for solving the pressure. Here we present semi-discrete schemes that motivate the fully discrete schemes that we exhibit in later sections.
\subsection{Time-Discrete Primitive Method}
We first present the following numerical method in the primitive variable formulation. Let $\delta t >0$ denote the time step, and assume $\boldsymbol{u}^{n}$, $\bar{p}^{n}$, $c^{n}$, $\bar{\mu}_{c}^{n}$ are the solution at the time $t=n\delta t$, We then find the solutions at time $t=(n+1)\delta t$ are $\boldsymbol{u}^{n+1}$, $\bar{p}^{n+1}$, $c^{n+1}$, $\bar{\mu}_{c}^{n+1}$ that satisfy 
\begin{align}
&\rho^{n} \cfrac{\boldsymbol{u}^{n+1}-\boldsymbol{u}^{n}}{\delta t}+\rho^{n} (\boldsymbol{u}^{n}\cdot \boldsymbol{\nabla} )\boldsymbol{u}^{n+1}+\cfrac{1}{2}\boldsymbol{u}^{n+1}\big(\cfrac{\rho^{n+1}-\rho^{n}}{\delta t}+\boldsymbol{\nabla} \cdot (\rho^{n} \boldsymbol{u}^{n})\big)=-\cfrac{1}{M} \boldsymbol{\nabla} \bar{p}^{n+1} \notag\\
&+\cfrac{1}{M}\rho^{n} {\bar{\mu}_{c}}^{n+1}\boldsymbol{\nabla} c^{n}+\cfrac{1}{Re}\boldsymbol{\nabla} \cdot (\mu(c^{n}) \boldsymbol{\nabla} \boldsymbol{u}^{n+1})+\cfrac{1}{3Re}\boldsymbol{\nabla}\big(\mu(c^{n})(\boldsymbol{\nabla} \cdot \boldsymbol{u}^{n+1}) \big)-\cfrac{\rho^{n}}{Fr} \boldsymbol{j},\label{mm-mom-1}\\
&\boldsymbol{\nabla} \cdot \boldsymbol{u}^{n+1}=\cfrac{\alpha}{Pe}\boldsymbol{\nabla} \cdot\big( m(c^{n})\boldsymbol{\nabla}  {\bar{\mu}_{c}}^{n+1}\big)+\cfrac{\alpha^2 }{Pe}\boldsymbol{\nabla}\cdot\big(  m(c^{n}) \boldsymbol{\nabla} \bar{p}^{n+1}\big),\label{mm-mass2-1}\\
&\rho^{n+1}\cfrac{c^{n+1}-c^{n}}{\delta t}+\rho^{n}\boldsymbol{u}^{n+1}\cdot \boldsymbol{\nabla}  c^{n}=\cfrac{1}{Pe}\boldsymbol{\nabla} \cdot\big( m(c^{n})\boldsymbol{\nabla}  {\bar{\mu}_{c}}^{n+1}\big)\notag\\
&+\cfrac{\alpha }{Pe}\boldsymbol{\nabla}\cdot\big(  m(c^{n}) \boldsymbol{\nabla} \bar{p}^{n+1}\big),\label{mm-phase-1}\\
&\rho^{n+1}{\bar{\mu}_{c}}^{n+1}=\cfrac{M\eta}{\epsilon We}\rho^{n+\frac{1}{2}}g(c^{n+1},c^{n})+\cfrac{M\eta}{\epsilon We} F^{n+\frac{1}{2}}(c)~r(c^{n+1},c^{n})\notag\\
&+\cfrac{\epsilon\eta M}{2 We}(\boldsymbol{\nabla} c\cdot \boldsymbol{\nabla} c)^{n+\frac{1}{2}} r(c^{n+1},c^{n})-\cfrac{\epsilon\eta M}{ We}\boldsymbol{\nabla} \cdot (\rho^{n+\frac{1}{2}}\boldsymbol{\nabla} c^{n+\frac{1}{2}}),\label{mm-chem-1}
\end{align}
where $\rho^{n+\frac{1}{2}}=(\rho^{n+1}+\rho^{n})/2$, $c^{n+\frac{1}{2}}=(c^{n+1}+c^{n})/2$, $F^{n+\frac{1}{2}}(c)=(F(c^{n+1})+F(c^{n}))/2$, $(\boldsymbol{\nabla} c\cdot \boldsymbol{\nabla} c)^{n+\frac{1}{2}} =(\boldsymbol{\nabla} c^{n+1}\cdot \boldsymbol{\nabla} c^{n+1} + \boldsymbol{\nabla} c^{n}\cdot \boldsymbol{\nabla} c^{n})/2$ are the temporal average, and
\begin{align}
g(c^{n+1},c^n)=\cfrac{1}{4}\big(c^{n+1}(c^{n+1}-1)+c^{n}(c^{n}-1)\big)(c^{n+1}+c^{n}-1)
\end{align}
is an approximation to the nonlinear function $F'(c)=f(c)=c(c-1)(c-1/2)$. Here we note the identity,
\begin{align}
&F(c^{n+1})-F(c^{n})=g(c^{n+1},c^n)(c^{n+1}-c^{n}).
\end{align}
\begin{align}
r(c^{n+1},c^{n})=-\alpha \rho(c^{n+1})\rho(c^{n}),\label{def-rhoal}
\end{align}
is an approximation of the nonlinear function $\partial \rho/\partial c=-\alpha \rho^2$, which satisfies the following identity:
\begin{align}
&\rho(c^{n+1})-\rho(c^{n})=r(c^{n+1},c^{n})(c^{n+1}-c^{n}).\label{def-r}
\end{align}
Note that the function $g(c^{n+1},c^{n})$ and $r(c^{n+1},c^{n})$ are critical for the achievement of the mass conservation and energy stability of the numerical schemes. \\
Using the following boundary conditions
\begin{align}
{\boldsymbol{u}^{n+1}|}_{\partial\Omega}=0,~~~~\boldsymbol{n}\cdot \boldsymbol{\nabla}c^{n+1}\rvert_{\partial \Omega}=\boldsymbol{n}\cdot \boldsymbol{\nabla}\mu_{c}^{n+1}\rvert_{\partial \Omega}=0,\label{mm-bc}
\end{align}
we now show that the time-discrete primitive method (\ref{mm-mom-1})-(\ref{mm-chem-1}) satisfies the following properties:
\begin{thm}
The time-discrete primitive method (\ref{mm-mom-1})-(\ref{mm-chem-1}) is mass conservative for the binary fluid and the fluid components, ${\rm i.e}$, 
\begin{align}
\big(\rho^{n+1},1\big)_{L^2}&=\big(\rho^{n},1\big)_{L^2},~~{\rm and}~~\big(\rho^{n+1} c^{n+1},1\big)_{L^2}=\big(\rho^{n}c^{n},1\big)_{L^2},~~\forall n\ge 0.
\end{align}
Here we use $||\cdot||_{L^2}$ to denote the norm of $L^2(\Omega)$ in Sobolev spaces, and $(\cdot ,\cdot)_{L^2}$ denotes the inner product in $L^2(\Omega)$.
\end{thm}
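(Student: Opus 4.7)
The strategy is to mimic the continuous proof of Theorem 3.1: first derive a \emph{discrete continuity equation} for $\rho$ from the scheme, then add a suitable multiple of it to the phase equation (\ref{mm-phase-1}) to recover a conservation form for $\rho c$. The identity (\ref{def-r}) for the function $r(c^{n+1},c^n) = -\alpha\rho(c^{n+1})\rho(c^n)$ is the crucial algebraic tool, since it is the discrete analogue of $\rho'(c) = -\alpha\rho^2$ that makes the chain rule work exactly at the time-discrete level.

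First I would eliminate the diffusive fluxes. Multiplying the quasi-incompressibility relation (\ref{mm-mass2-1}) by $1/\alpha$ and subtracting it from the phase equation (\ref{mm-phase-1}), the two identical flux groups on the right cancel, leaving
\begin{equation*}
\rho^{n+1}\,\frac{c^{n+1}-c^{n}}{\delta t}+\rho^{n}\boldsymbol{u}^{n+1}\cdot\boldsymbol{\nabla} c^{n}-\frac{1}{\alpha}\boldsymbol{\nabla}\cdot\boldsymbol{u}^{n+1}=0.
\end{equation*}
Next I invoke (\ref{def-r}) in the form $\rho^{n+1}(c^{n+1}-c^{n})=-\frac{1}{\alpha\rho^{n}}(\rho^{n+1}-\rho^{n})$, substitute, and multiply by $\rho^{n}$. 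Recognizing $-\alpha(\rho^{n})^{2}\boldsymbol{\nabla} c^{n}$ as $\boldsymbol{\nabla}\rho^{n}$ (again from $\rho'(c)=-\alpha\rho^{2}$), the three terms collapse into the conservation form
\begin{equation*}
\frac{\rho^{n+1}-\rho^{n}}{\delta t}+\boldsymbol{\nabla}\cdot(\rho^{n}\boldsymbol{u}^{n+1})=0.
\end{equation*}
Integrating over $\Omega$ and using the no-slip condition $\boldsymbol{u}^{n+1}|_{\partial\Omega}=0$ kills the divergence term and yields $(\rho^{n+1},1)_{L^{2}}=(\rho^{n},1)_{L^{2}}$.

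For the single-component mass, I would multiply the discrete continuity equation just obtained by $c^{n}$ and add it to (\ref{mm-phase-1}). The telescoping identities $\rho^{n+1}(c^{n+1}-c^{n})+c^{n}(\rho^{n+1}-\rho^{n})=\rho^{n+1}c^{n+1}-\rho^{n}c^{n}$ and $\rho^{n}\boldsymbol{u}^{n+1}\cdot\boldsymbol{\nabla} c^{n}+c^{n}\boldsymbol{\nabla}\cdot(\rho^{n}\boldsymbol{u}^{n+1})=\boldsymbol{\nabla}\cdot(\rho^{n}c^{n}\boldsymbol{u}^{n+1})$ turn the left side into the discrete conservation form $(\rho c)^{n+1}_{t}+\boldsymbol{\nabla}\cdot(\rho^{n}c^{n}\boldsymbol{u}^{n+1})$. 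On the right, the combined flux $\frac{1}{Pe}m(c^{n})\boldsymbol{\nabla}\bar{\mu}_{c}^{n+1}+\frac{\alpha}{Pe}m(c^{n})\boldsymbol{\nabla}\bar{p}^{n+1}$ equals $\frac{1}{Pe}m(c^{n})\boldsymbol{\nabla}\mu_{c}^{n+1}$ by the definition (\ref{new-mu}) of $\bar{\mu}_{c}$, and the boundary condition $\boldsymbol{n}\cdot\boldsymbol{\nabla}\mu_{c}^{n+1}|_{\partial\Omega}=0$ from (\ref{mm-bc}) makes its integral vanish. Together with the vanishing of $\int\boldsymbol{\nabla}\cdot(\rho^{n}c^{n}\boldsymbol{u}^{n+1})$ from the no-slip condition, this gives $(\rho^{n+1}c^{n+1},1)_{L^{2}}=(\rho^{n}c^{n},1)_{L^{2}}$.

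The main subtlety, and the point where the choice of time discretization pays off, is the algebraic cancellation in the step that produces the discrete continuity equation. Without the specially designed $r(c^{n+1},c^{n})$ satisfying (\ref{def-r}) exactly, the substitution that converts $\rho^{n+1}\delta_{t}c$ into $\delta_{t}\rho/(-\alpha\rho^{n})$ would only hold up to a truncation error, destroying exact mass conservation. Likewise, the use of $\rho^{n}$ (not $\rho^{n+1/2}$ or $\rho^{n+1}$) in the transport and compressibility terms is what lets the discrete product rule close without residual terms. Once these algebraic manipulations are carried out, the boundary integrations are routine and the theorem follows.
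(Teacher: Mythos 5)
Your proposal is correct and follows essentially the same route as the paper: eliminate the diffusive fluxes by combining (\ref{mm-mass2-1}) with (\ref{mm-phase-1}), use the identity (\ref{def-r}) (equivalently, multiply by $-\alpha\rho^{n}$) to collapse the result into the discrete continuity equation $\frac{\rho^{n+1}-\rho^{n}}{\delta t}+\boldsymbol{\nabla}\cdot(\rho^{n}\boldsymbol{u}^{n+1})=0$, then multiply that equation by $c^{n}$, add it back to (\ref{mm-phase-1}), recombine the fluxes into $\frac{1}{Pe}\boldsymbol{\nabla}\cdot(m(c^{n})\boldsymbol{\nabla}\mu_{c}^{n+1})$ via (\ref{new-mu}), and integrate using the boundary conditions (\ref{mm-bc}). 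The only cosmetic difference is that you substitute via $\rho^{n+1}(c^{n+1}-c^{n})=-\frac{1}{\alpha\rho^{n}}(\rho^{n+1}-\rho^{n})$ before rescaling, whereas the paper multiplies the whole relation by $-\alpha\rho^{n}$ directly; the algebra and conclusions are identical.
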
\label{pm-dis-mass-theorem}
\begin{pf}Multiplying (\ref{mm-mass2-1}) by $1/\alpha$ and substituting into (\ref{mm-phase-1}), we obtain
\begin{align}
&\rho^{n+1}\cfrac{c^{n+1}-c^{n}}{\delta t}+ \rho^{n}\boldsymbol{u}^{n+1} \cdot\boldsymbol{\nabla}c^{n}=\cfrac{1}{\alpha}\boldsymbol{\nabla} \cdot  \boldsymbol{u}^{n+1}.\label{new-sub}
\end{align}
Multiplying the above by $-\alpha\rho^{n}$, we obtain the continuity equation at the time discrete level
\begin{align}
\cfrac{\rho^{n+1}-\rho^{n}}{\delta t}+ \boldsymbol{\nabla} \cdot  (\rho^n\boldsymbol{u}^{n+1})=0,\label{part-phvar-2a}
\end{align}
where we have used the identity (\ref{def-rhoal}) and (\ref{def-r}). Integrating of Eq.(\ref{part-phvar-2a}) over $\Omega$, thanks to the boundary condition (\ref{mm-bc}), we obtain the mass conservation for the binary fluid $(\rho^{n+1},1)_{L^2}=(\rho^{n},1)_{L^2}$. Multiplying (\ref{part-phvar-2a}) by $c^{n}$, and adding to Eq.(\ref{mm-phase-1}), we obtain
\begin{align}
\cfrac{\rho^{n+1}c^{n+1}-\rho^{n}c^{n}}{\delta t}+\boldsymbol{\nabla}\cdot(\rho^{n}\boldsymbol{u}^{n+1} c^{n})&=\cfrac{1}{Pe}\boldsymbol{\nabla} \cdot\big( m(c^{n})\boldsymbol{\nabla}  {\bar{\mu}_{c}}^{n+1}\big)\notag\\
&+\cfrac{\alpha }{Pe}\boldsymbol{\nabla}\cdot\big(  m(c^{n}) \boldsymbol{\nabla} \bar{p}^{n+1}\big).\label{part-phvar-2bb}
\end{align}
Integrating of Eq.(\ref{part-phvar-2bb}) over $\Omega$, thanks to the definition (\ref{new-mu}) and boundary condition (\ref{mm-bc}), we obtain the mass conservation for the single fluid $(\rho^{n+1}c^{n+1},1)_{L^2}=(\rho^{n}c^{n},1)_{L^2}$.
\end{pf}
\begin{thm}
The time-discrete primitive method (\ref{mm-mom-1})-(\ref{mm-chem-1}) is energy stable, ${\rm i.e}$, 
\begin{align}
E^{n+1}-E^{n}&=\bigg(\cfrac{1}{2} ||\sqrt{\rho^{n+1}}\boldsymbol{u}^{n+1}||^2_{L^2}+\cfrac{\eta\epsilon}{2We}||\sqrt{\rho^{n+1}}\boldsymbol{\nabla} c^{n+1}||^2_{L^2}\notag\\
&+\int_{\Omega}\big(\cfrac{\eta}{\epsilon We}\rho^{n+1}F(c^{n+1})+\cfrac{1}{Fr}\rho^{n+1} y\big){\rm d}\boldsymbol{x}\bigg)\notag\\
&-\bigg(\cfrac{1}{2} ||\sqrt{\rho^{n}}\boldsymbol{u}^{n}||^2_{L^2}+\cfrac{\eta\epsilon}{2We}||\sqrt{\rho^{n}}\boldsymbol{\nabla} c^{n}||^2_{L^2}\notag\\
&+\int_{\Omega}\big(\cfrac{\eta}{\epsilon We}\rho^{n}F(c^{n})+\cfrac{1}{Fr}\rho^{n} y\big){\rm d}\boldsymbol{x}\bigg)\notag\\
&=-\cfrac{\delta t}{Re}||\sqrt{\mu(c^{n})}\boldsymbol{\nabla} \boldsymbol{u}^{n+1}||^2_{L^2}-\cfrac{\delta t}{3Re}||\sqrt{\mu(c^{n})}\boldsymbol{\nabla} \cdot \boldsymbol{u}^{n+1}||^2_{L^2}\notag\\
&-\cfrac{\delta t}{MPe}||\sqrt{m(c^{n})}\boldsymbol{\nabla} \mu_{c}^{n+1}||^2_{L^2}- \cfrac{1}{2}||\sqrt{\rho^{n}}(\boldsymbol{u}^{n+1}-\boldsymbol{u}^{n})||^2_{L^2}\le 0,\label{sdfas-e}
\end{align}
where $E^{n+1}$ is the total energy at the time discrete level.
\end{thm}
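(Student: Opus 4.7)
The plan is to mirror the four-step energy identity used in the continuous proof of Theorem 3.2, testing Eq.(\ref{mm-mom-1}) against $\boldsymbol{u}^{n+1}$, Eq.(\ref{mm-mass2-1}) against $\bar{p}^{n+1}/M$, Eq.(\ref{mm-phase-1}) against $\bar{\mu}_{c}^{n+1}/M$, and Eq.(\ref{mm-chem-1}) against $(c^{n+1}-c^{n})/(M\delta t)$, then integrating over $\Omega$ and summing. As in the continuous argument, the pressure coupling $\tfrac{1}{M}\bar{p}^{n+1}\boldsymbol{\nabla}\!\cdot\!\boldsymbol{u}^{n+1}$ cancels between the momentum and mass equations, the advective coupling $\tfrac{1}{M}\rho^{n}\bar{\mu}_{c}^{n+1}\boldsymbol{u}^{n+1}\!\cdot\!\boldsymbol{\nabla}c^{n}$ cancels between the momentum and phase equations, and the mixed Darcy-type cross term $\tfrac{\alpha}{PeM}m(c^{n})\boldsymbol{\nabla}\bar{\mu}_{c}^{n+1}\!\cdot\!\boldsymbol{\nabla}\bar{p}^{n+1}$ cancels between the mass and phase equations. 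What remains on the left should be the time-difference of the four energy components, and on the right the desired dissipative terms.

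The kinetic-energy manipulation is where the ``half'' zero term added in Eq.(\ref{mm-mom-1}) becomes essential. Using the pointwise identity $a\cdot(a-b)=\tfrac{1}{2}(|a|^{2}-|b|^{2}+|a-b|^{2})$ on the time derivative, integration-by-parts on the convective term under the boundary condition (\ref{mm-bc}), and the discrete continuity equation (\ref{part-phvar-2a}) derived in the proof of Theorem 4.1, I would collapse the three kinetic contributions into
\[
\tfrac{1}{2\delta t}\bigl(\|\sqrt{\rho^{n+1}}\boldsymbol{u}^{n+1}\|^{2}_{L^{2}}-\|\sqrt{\rho^{n}}\boldsymbol{u}^{n}\|^{2}_{L^{2}}+\|\sqrt{\rho^{n}}(\boldsymbol{u}^{n+1}-\boldsymbol{u}^{n})\|^{2}_{L^{2}}\bigr),
\]
so that the numerical dissipation term in (\ref{sdfas-e}) arises automatically. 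The gravity contribution $-\tfrac{\rho^{n}}{Fr}\boldsymbol{j}$ tested against $\boldsymbol{u}^{n+1}$ produces $\int\rho^{n}\boldsymbol{u}^{n+1}\!\cdot\!\boldsymbol{\nabla}y\,d\boldsymbol{x}$, which after integration-by-parts and another use of (\ref{part-phvar-2a}) telescopes to $\tfrac{1}{Fr\,\delta t}\int y(\rho^{n+1}-\rho^{n})\,d\boldsymbol{x}$, the discrete analogue of (\ref{gravity-e}).

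The most delicate piece, and the step I expect to be the main obstacle, is the chemical-potential identity: here the custom approximations $g(c^{n+1},c^{n})$ and $r(c^{n+1},c^{n})$ together with the averages $\rho^{n+\frac{1}{2}}$, $F^{n+\frac{1}{2}}(c)$ and $(\boldsymbol{\nabla}c\cdot\boldsymbol{\nabla}c)^{n+\frac{1}{2}}$ must conspire exactly. Testing Eq.(\ref{mm-chem-1}) by $(c^{n+1}-c^{n})/(M\delta t)$ and invoking the chain-rule-type identities $F(c^{n+1})-F(c^{n})=g(c^{n+1},c^{n})(c^{n+1}-c^{n})$ and $\rho^{n+1}-\rho^{n}=r(c^{n+1},c^{n})(c^{n+1}-c^{n})$, the first two terms on the right of (\ref{mm-chem-1}) recombine via the elementary identity
\[
\tfrac{1}{2}(a+b)(c-d)+\tfrac{1}{2}(c+d)(a-b)=ac-bd,
\]
applied with $a=\rho^{n+1}$, $b=\rho^{n}$, $c=F(c^{n+1})$, $d=F(c^{n})$, to telescope into $\tfrac{\eta}{\epsilon We\,\delta t}\int(\rho^{n+1}F(c^{n+1})-\rho^{n}F(c^{n}))\,d\boldsymbol{x}$. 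The same averaging identity applied to the last two terms of (\ref{mm-chem-1}), after integration-by-parts on $\boldsymbol{\nabla}\cdot(\rho^{n+\frac{1}{2}}\boldsymbol{\nabla}c^{n+\frac{1}{2}})$, yields the gradient-energy telescoping $\tfrac{\eta\epsilon}{2We\,\delta t}(\|\sqrt{\rho^{n+1}}\boldsymbol{\nabla}c^{n+1}\|^{2}_{L^{2}}-\|\sqrt{\rho^{n}}\boldsymbol{\nabla}c^{n}\|^{2}_{L^{2}})$.

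Finally, the $L^{2}$ diffusion contributions from Eqs.(\ref{mm-mass2-1}) and (\ref{mm-phase-1}) produce, after integration-by-parts, the three pieces $-\tfrac{1}{PeM}\|\sqrt{m(c^{n})}\boldsymbol{\nabla}\bar{\mu}_{c}^{n+1}\|^{2}_{L^{2}}$, $-\tfrac{\alpha^{2}}{PeM}\|\sqrt{m(c^{n})}\boldsymbol{\nabla}\bar{p}^{n+1}\|^{2}_{L^{2}}$, together with two copies of the cross term $-\tfrac{\alpha}{PeM}\int m(c^{n})\boldsymbol{\nabla}\bar{\mu}_{c}^{n+1}\!\cdot\!\boldsymbol{\nabla}\bar{p}^{n+1}\,d\boldsymbol{x}$; by the definition (\ref{new-mu}), i.e. $\mu_{c}^{n+1}=\bar{\mu}_{c}^{n+1}+\alpha\bar{p}^{n+1}$, these reassemble into $-\tfrac{1}{PeM}\|\sqrt{m(c^{n})}\boldsymbol{\nabla}\mu_{c}^{n+1}\|^{2}_{L^{2}}$, exactly the Cahn--Hilliard dissipation in (\ref{sdfas-e}). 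The viscous dissipation $-\tfrac{1}{Re}\|\sqrt{\mu(c^{n})}\boldsymbol{\nabla}\boldsymbol{u}^{n+1}\|^{2}_{L^{2}}-\tfrac{1}{3Re}\|\sqrt{\mu(c^{n})}\boldsymbol{\nabla}\!\cdot\!\boldsymbol{u}^{n+1}\|^{2}_{L^{2}}$ drops out directly from integration-by-parts on the viscous terms of Eq.(\ref{mm-mom-1}). Multiplying the assembled identity by $\delta t$ then gives (\ref{sdfas-e}); the remainder is bookkeeping, with the chemical-potential telescoping being the only genuinely algebraic subtlety.
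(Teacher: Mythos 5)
Your proposal is correct and follows essentially the same route as the paper's proof: the same four test functions (up to the placement of the $\delta t$ factor), the same cancellations of the pressure, advective, and Darcy cross terms, the same kinetic-energy telescoping using the added zero term and the discrete continuity equation, and the same use of the identities for $g$ and $r$ to telescope the chemical-potential terms. The recombination of the three diffusion terms into $-\tfrac{\delta t}{MPe}\|\sqrt{m(c^{n})}\boldsymbol{\nabla}\mu_{c}^{n+1}\|^{2}_{L^{2}}$ via $\mu_{c}^{n+1}=\bar{\mu}_{c}^{n+1}+\alpha\bar{p}^{n+1}$ is exactly what the paper's final statement relies on, so no gap remains.
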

\begin{proof}
Multiplying Eq.(\ref{mm-mom-1}) by $\delta t \boldsymbol{u}^{n+1}$, using integration-by-parts and dropping the boundary terms, we obtain
\begin{align}
\cfrac{1}{2} ||\sqrt{\rho^{n+1}}&\boldsymbol{u}^{n+1}||^2_{L^2}- \cfrac{1}{2}||\sqrt{\rho^{n}}\boldsymbol{u}^{n}||^2_{L^2}+\cfrac{1}{Fr}\int_{\Omega}\bigg((\rho^{n+1}-\rho^{n})y\bigg){\rm d}\boldsymbol{x}\notag\\
&=\cfrac{\delta t}{M} \big(\bar{p}^{n+1},(\boldsymbol{\nabla} \cdot \boldsymbol{u}^{n+1})\big)_{L^2}+\cfrac{\delta t}{M}\big(\rho^{n} \boldsymbol{u}^{n+1}\cdot \boldsymbol{\nabla} c^{n},{\bar{\mu}_{c}}^{n+1}\big)_{L^2}\notag\\
&- \cfrac{\delta t}{Re}||\sqrt{\mu(c^{n})}\boldsymbol{\nabla} \boldsymbol{u}^{n+1}||^2_{L^2}-\cfrac{\delta t}{3Re}||\sqrt{\mu(c^{n})}\boldsymbol{\nabla} \cdot \boldsymbol{u}^{n+1}||^2_{L^2}\notag\\
&-\cfrac{1}{2} ||\sqrt{\rho^{n}}(\boldsymbol{u}^{n+1}-\boldsymbol{u}^{n})||^2_{L^2},\label{method-inner-mom2-1}
\end{align}
where we have used the homogeneous boundary condition (\ref{mm-bc}), the identities (\ref{gravity-e}), (\ref{homo-bc-u-1}), (\ref{part-phvar-2a}) and
\begin{align}
\int_{\Omega}\bigg(\rho^{n}\boldsymbol{j}\cdot \boldsymbol{u}^{n+1}\bigg){\rm d }\boldsymbol{x}=\int_{\Omega}\bigg(\rho^{n}\boldsymbol{u}^{n+1}\cdot \boldsymbol{\nabla}y\bigg){\rm d }\boldsymbol{x}&=-\int_{\Omega}\bigg(\boldsymbol{\nabla}\cdot(\rho^{n}\boldsymbol{u}^{n+1}) y\bigg){\rm d }\boldsymbol{x}\notag\\
&=\int_{\Omega}\bigg(\cfrac{\rho^{n+1}-\rho^{n}}{\delta t} y\bigg){\rm d }\boldsymbol{x}.\label{potential}
\end{align}
Note that for all the following derivations, the boundary terms originated from the integration-by-parts can be dropped by using the homogeneous boundary condition (\ref{mm-bc}).\\
Multiplying Eq.(\ref{mm-mass2-1}) by $\delta t \bar{p}^{n+1}/ M$ and using integration-by-parts, we obtain
\begin{align}
0=&-\cfrac{\delta t}{M}\big((\boldsymbol{\nabla} \cdot \boldsymbol{u}^{n+1}),\bar{p}^{n+1}\big)_{L^2}-\cfrac{\delta t\alpha}{M Pe}\big( m(c^{n})\boldsymbol{\nabla} {\bar{\mu}_{c}}^{n+1}, \boldsymbol{\nabla} \bar{p}^{n+1}\big)_{L^2}\notag\\
&-\cfrac{\delta t\alpha^2}{M Pe} ||\sqrt{m(c^{n})}\boldsymbol{\nabla} \bar{p}^{n+1}||^2_{L^2}.\label{method-inner-mass-1-1}
\end{align}
Multiplying Eq.(\ref{mm-phase-1}) by $\delta t{\bar{\mu}_{c}}^{n+1}/M$ and using integration-by-parts, we obtain
\begin{align}
0=&-\cfrac{1}{M}\big(\rho^{n+1}(c^{n+1}-c^{n}),{\bar{\mu}_{c}}^{n+1}\big)_{L^2}-\cfrac{\delta t}{M}\big(\rho^{n}\boldsymbol{u}^{n+1}\cdot\boldsymbol{\nabla} c^{n}, {\bar{\mu}_{c}}^{n+1}\big)_{L^2}\notag\\
& -\cfrac{\delta t }{MPe}||\sqrt{ m(c^{n})}\boldsymbol{\nabla} {\bar{\mu}_{c}}^{n+1} ||^2_{L^{2}} -\cfrac{\delta t \alpha}{MPe}\big(m(c^{n})\boldsymbol{\nabla} {\bar{\mu}_{c}}^{n+1}, \boldsymbol{\nabla} \bar{p}^{n+1}\big)_{L^2}.\label{method-inner-phvar-1}
\end{align}
Multiplying (\ref{mm-chem-1}) by $(c^{n+1}-c^{n})/ M$ and using integration-by-parts, we obtain
\begin{align}
&\cfrac{\eta\epsilon}{2We}(||\sqrt{\rho^{n+1}}\boldsymbol{\nabla} c^{n+1}||^2_{L^2}-||\sqrt{\rho^{n}}\boldsymbol{\nabla} c^{n}||^2_{L^2})+\cfrac{\eta}{\epsilon We}\big(\int_{\Omega}\rho^{n+1}F(c^{n+1}){\rm d}\boldsymbol{x}\notag\\
&-\int_{\Omega}\rho^{n}F(c^{n}){\rm d} \boldsymbol{x}\big)=\cfrac{1}{M}\big(\rho^{n+1}{\bar{\mu}_{c}}^{n+1},c^{n+1}-c^{n}\big)_{L^2}.\label{method-inner-chem-1-1}
\end{align}
Summing up the four relations, (\ref{method-inner-mom2-1}) and (\ref{method-inner-mass-1-1})-(\ref{method-inner-chem-1-1}), we obtain the energy stability (\ref{sdfas-e}) for time-discrete primitive method.
\end{proof}
\subsection{Time-Discrete Projection Method}
To design an efficient projection-type methods, we follow the projection formulation to decouple the computation of the velocity $\boldsymbol{u}$ and pressure $p$. In particular, an intermediate velocity that does not satisfy the quasi-incompressibility constraint (\ref{nond-mass}) is computed at each time step, the pressure is then used to correct the intermediate velocity to get the next updated velocity that satisfies the quasi-incompressible constraint. Our projection method differs from traditional projection methods in that traditional methods usually use the pressure to project the intermediate velocity onto a space of divergence-free velocity fields (See \cite{JieSIAM2010} as example). Moreover, in our projection method, a pressure-Poisson equation (\ref{pj-mass}) naturally occurs in the reformulated system equations, and an extra pressure-Poisson equation is not required, which, however, is usually compulsory in traditional projection methods (This can be done applying the divergence operator to Eq.(\ref{pj-pj}), see \cite{Jie-proj} as a review). Our projection method for the q-NSCH system (\ref{nond-mom})-(\ref{nond-chem}) is the following: given $\tilde{\boldsymbol{u}}^{n}$, $\boldsymbol{u}^{n}$, $\bar{p}^{n}$, $c^{n}$, $\bar{\mu}_{c}^{n}$, find the solution $\tilde{\boldsymbol{u}}^{n+1}$, $\boldsymbol{u}^{n+1}$, $\bar{p}^{n+1}$, $c^{n+1}$, $\bar{\mu}_{c}^{n+1}$ satisfying 
\begin{align}
&\rho^{n} \cfrac{\tilde{\boldsymbol{u}}^{n+1}-\boldsymbol{u}^{n}}{\delta t}+\rho^{n} (\boldsymbol{u}^{n}\cdot \boldsymbol{\nabla} )\tilde{\boldsymbol{u}}^{n+1}+\cfrac{1}{2}\tilde{\boldsymbol{u}}^{n+1}\big(\cfrac{\rho^{n+1}-\rho^{n}}{\delta t}+\boldsymbol{\nabla} \cdot (\rho^{n} \boldsymbol{u}^{n})\big)\notag\\
&=\cfrac{1}{Re}\boldsymbol{\nabla} \cdot (\mu(c^{n}) \boldsymbol{\nabla}\tilde{ \boldsymbol{u}}^{n+1})+\cfrac{1}{3Re}\boldsymbol{\nabla}\big(\mu(c^{n})(\boldsymbol{\nabla} \cdot \tilde{\boldsymbol{u}}^{n+1}) \big),\label{pj-mom}\\
&\rho^{n+1} \cfrac{\boldsymbol{u}^{n+1}-\tilde{\boldsymbol{u}}^{n+1}}{\delta t}=-\cfrac{1}{M} \boldsymbol{\nabla} \bar{p}^{n+1}+\cfrac{1}{M}\rho^{n+1}{\bar{\mu}_{c}}^{n+1}  \boldsymbol{\nabla}c^{n+1}-\cfrac{\rho^{n+1}}{Fr} \boldsymbol{j}\label{pj-pj}\\
&\boldsymbol{\nabla} \cdot \boldsymbol{u}^{n+1}=\cfrac{\alpha}{Pe}\boldsymbol{\nabla} \cdot\big( m(c^{n})\boldsymbol{\nabla}  {\bar{\mu}_{c}}^{n+1}\big)+\cfrac{\alpha^2 }{Pe}\boldsymbol{\nabla}\cdot\big(  m(c^{n}) \boldsymbol{\nabla} \bar{p}^{n+1}\big),\label{pj-mass}\\
&\rho^{n}\cfrac{c^{n+1}-c^{n}}{\delta t}+\rho^{n+1}\boldsymbol{u}^{n+1} \cdot\boldsymbol{\nabla} c^{n+1}=\cfrac{1}{Pe}\boldsymbol{\nabla} \cdot\big( m(c^{n})\boldsymbol{\nabla}  {\bar{\mu}_{c}}^{n+1}\big)\notag\\
&\hspace{52mm}+\cfrac{\alpha }{Pe}\boldsymbol{\nabla}\cdot\big(  m(c^{n}) \boldsymbol{\nabla} \bar{p}^{n+1}\big),\label{pj-phase}\\
&\rho^{n}{\bar{\mu}_{c}}^{n+1}=\cfrac{M\eta}{\epsilon We}\rho^{n+\frac{1}{2}}g(c^{n+1},c^{n})+\cfrac{M\eta}{\epsilon We} F^{n+\frac{1}{2}}(c)~r(c^{n+1},c^{n})\notag\\
&+\cfrac{\epsilon\eta M}{2 We}(\boldsymbol{\nabla} c\cdot \boldsymbol{\nabla} c)^{n+\frac{1}{2}}~r(c^{n+1},c^{n})-\cfrac{\epsilon\eta M}{ We}\boldsymbol{\nabla} \cdot (\rho_{n+\frac{1}{2}}\boldsymbol{\nabla} c_{n+\frac{1}{2}}),\label{pj-chem}
\end{align}
with the following boundary conditions
\begin{align}
&{\tilde{\boldsymbol{u}}^{n+1}|}_{\partial\Omega}=0,~~~~\boldsymbol{n}\cdot{\boldsymbol{u}|^{n+1}}_{\partial \Omega}=0,~~~~\boldsymbol{n}\cdot \boldsymbol{\nabla}c^{n+1}\rvert_{\partial \Omega}=\boldsymbol{n}\cdot \boldsymbol{\nabla}\mu_{c}^{n+1}\rvert_{\partial \Omega}=0\label{pj-bc}
\end{align}
Here the intermediate velocity $\tilde{\boldsymbol{u}}$ is solved first in (\ref{pj-mom}), then the pressure $\bar{p}$ is solved in (\ref{pj-mass}) and is used to correct $\tilde{\boldsymbol{u}}$ to obtain the velocity $\boldsymbol{u}$ that satisfies the quasi-incompressible constraint through the projection equation (\ref{pj-pj}).
\begin{thm}
The time-discrete projection scheme (\ref{pj-mom})-(\ref{pj-chem}) is mass conservative for the binary fluid and single fluid, ${\rm i.e}$, 
\begin{align}
\big(\rho^{n+1},1\big)_{L^2}&=\big(\rho^{n},1\big)_{L^2},~~{\rm and}~~\big(\rho^{n+1} c^{n+1},1\big)_{L^2}=\big(\rho^{n}c^{n},1\big)_{L^2},~~\forall n\ge 0.
\end{align}
\end{thm}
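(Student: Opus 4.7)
The plan is to follow the same strategy used in the proof of the corresponding theorem for the primitive scheme: first distill a discrete continuity equation for $\rho$ out of the quasi-incompressibility constraint and the phase equation, and then combine that continuity equation with the phase equation to obtain a discrete conservation law for $\rho c$. The only structural change is the time indexing of the density coefficients in (\ref{pj-phase}) versus (\ref{mm-phase-1}): in the projection scheme the discrete time derivative term carries $\rho^{n}$ and the convective term carries $\rho^{n+1}$, which is the opposite of the primitive scheme. Everything else in the argument is driven by this switch. Note that $\boldsymbol{u}^{n+1}$ (not $\tilde{\boldsymbol{u}}^{n+1}$) is the velocity appearing in (\ref{pj-mass})-(\ref{pj-phase}), so the intermediate momentum equation (\ref{pj-mom}) and the projection step (\ref{pj-pj}) are not needed for the mass argument.

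For the first assertion, I would multiply (\ref{pj-mass}) by $1/\alpha$ and substitute into (\ref{pj-phase}) to cancel the right-hand sides, producing
\begin{equation*}
\rho^{n}\,\frac{c^{n+1}-c^{n}}{\delta t}+\rho^{n+1}\boldsymbol{u}^{n+1}\cdot\boldsymbol{\nabla}c^{n+1}=\frac{1}{\alpha}\boldsymbol{\nabla}\cdot\boldsymbol{u}^{n+1}.
\end{equation*}
Multiplying this identity by $-\alpha\rho^{n+1}$, using $-\alpha\rho^{n+1}\rho^{n}=r(c^{n+1},c^{n})$ from (\ref{def-rhoal}) together with the telescoping identity (\ref{def-r}) on the time-derivative term, and using $\rho'(c)=-\alpha\rho^{2}$ in the form $-\alpha(\rho^{n+1})^{2}\boldsymbol{\nabla}c^{n+1}=\boldsymbol{\nabla}\rho^{n+1}$ on the convective term, I obtain the discrete continuity equation
\begin{equation*}
\frac{\rho^{n+1}-\rho^{n}}{\delta t}+\boldsymbol{\nabla}\cdot(\rho^{n+1}\boldsymbol{u}^{n+1})=0.
\end{equation*}
Integrating over $\Omega$ and using $\boldsymbol{n}\cdot\boldsymbol{u}^{n+1}|_{\partial\Omega}=0$ from (\ref{pj-bc}) yields $(\rho^{n+1},1)_{L^{2}}=(\rho^{n},1)_{L^{2}}$.

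For the second assertion, I would use the splitting
\begin{equation*}
\rho^{n+1}c^{n+1}-\rho^{n}c^{n}=\rho^{n}(c^{n+1}-c^{n})+(\rho^{n+1}-\rho^{n})\,c^{n+1},
\end{equation*}
which is chosen so that the first term matches the time-derivative term of (\ref{pj-phase}); the alternate splitting would not close. I would then use the discrete continuity equation just derived to replace $(\rho^{n+1}-\rho^{n})c^{n+1}$ by $-\delta t\,c^{n+1}\boldsymbol{\nabla}\cdot(\rho^{n+1}\boldsymbol{u}^{n+1})$ and substitute (\ref{pj-phase}) into the first term. Combining the advective pieces via the product rule $c^{n+1}\boldsymbol{\nabla}\cdot(\rho^{n+1}\boldsymbol{u}^{n+1})+\rho^{n+1}\boldsymbol{u}^{n+1}\cdot\boldsymbol{\nabla}c^{n+1}=\boldsymbol{\nabla}\cdot(\rho^{n+1}\boldsymbol{u}^{n+1}c^{n+1})$, and collapsing the potential terms using $\boldsymbol{\nabla}\bar{\mu}_{c}^{n+1}+\alpha\boldsymbol{\nabla}\bar{p}^{n+1}=\boldsymbol{\nabla}\mu_{c}^{n+1}$ from (\ref{new-mu}), yields
\begin{equation*}
\frac{\rho^{n+1}c^{n+1}-\rho^{n}c^{n}}{\delta t}+\boldsymbol{\nabla}\cdot(\rho^{n+1}\boldsymbol{u}^{n+1}c^{n+1})=\frac{1}{Pe}\boldsymbol{\nabla}\cdot\bigl(m(c^{n})\boldsymbol{\nabla}\mu_{c}^{n+1}\bigr).
\end{equation*}
Integrating over $\Omega$ and invoking both $\boldsymbol{n}\cdot\boldsymbol{u}^{n+1}|_{\partial\Omega}=0$ and $\boldsymbol{n}\cdot\boldsymbol{\nabla}\mu_{c}^{n+1}|_{\partial\Omega}=0$ from (\ref{pj-bc}) completes the proof.

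The main obstacle is the careful bookkeeping around the flipped density indices: one must pair $\rho^{n+1}$ (rather than $\rho^{n}$) as the multiplier in the derivation of the continuity equation, and one must choose the right splitting of $\rho^{n+1}c^{n+1}-\rho^{n}c^{n}$ in the second step, so that the approximation $r(c^{n+1},c^{n})=-\alpha\rho(c^{n+1})\rho(c^{n})$ and the identity (\ref{def-r}) collapse cleanly. Once these matching choices are made, the argument is a direct adaptation of the primitive-method proof.
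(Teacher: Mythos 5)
Your proposal is correct and follows essentially the same route as the paper: multiply (\ref{pj-mass}) by $1/\alpha$, substitute into (\ref{pj-phase}), multiply by $-\alpha\rho^{n+1}$ and use (\ref{def-rhoal})--(\ref{def-r}) to obtain the discrete continuity equation $\frac{\rho^{n+1}-\rho^{n}}{\delta t}+\boldsymbol{\nabla}\cdot(\rho^{n+1}\boldsymbol{u}^{n+1})=0$, then combine it (your splitting of $\rho^{n+1}c^{n+1}-\rho^{n}c^{n}$ is exactly the paper's ``multiply by $c^{n+1}$ and add to (\ref{pj-phase})'') and integrate using (\ref{pj-bc}). Your explicit tracking of the flipped density indices and the collapse of the $\bar{\mu}_{c},\bar{p}$ terms into $\mu_{c}$ via (\ref{new-mu}) only spells out details the paper leaves implicit by reference to the primitive-method proof.
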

\begin{proof}
Using the same strategy that used for Theorem {\bf{\ref{pm-dis-mass-theorem}}}, we obtain the mass conservation equation from our time-discrete projection method:
\begin{align}
\cfrac{\rho^{n+1}-\rho^{n}}{\delta t}+ \boldsymbol{\nabla} \cdot  (\rho^{n+1}\boldsymbol{u}^{n+1})=0,\label{part-phvar-2a1}
\end{align}
where we have used the identity (\ref{def-r}). Integrating of Eq.(\ref{part-phvar-2a1}) over $\Omega$, thanks to the homogeneous boundary conditions (\ref{pj-bc}), we obtain the mass conservation for the binary fluid $(\rho^{n+1},1)_{L^2}=(\rho^{n},1)_{L^2}$. Again, multiplying Eq.(\ref{part-phvar-2a1}) by $c^{n+1}$, and adding to Eq.(\ref{pj-phase}) we obtain
\begin{align}
&\cfrac{\rho^{n+1}c^{n+1}-\rho^{n}c^{n}}{\delta t}+\boldsymbol{\nabla}\cdot(\rho^{n+1}\boldsymbol{u}^{n+1} c^{n+1})=\cfrac{1}{Pe}\boldsymbol{\nabla} \cdot\big( m(c^{n})\boldsymbol{\nabla}  {\bar{\mu}_{c}}^{n+1}\big)\notag\\
&+\cfrac{\alpha }{Pe}\boldsymbol{\nabla}\cdot\big(  m(c^{n}) \boldsymbol{\nabla} \bar{p}^{n+1}\big).\label{part-phvar-2bc}
\end{align}
Integrating of Eq.(\ref{part-phvar-2bc}) over $\Omega$, thanks to the boundary condition (\ref{pj-bc}), we obtain the mass conservation for fluid components $(\rho^{n+1}c^{n+1},1)_{L^2}=(\rho^{n}c^{n},1)_{L^2}$.
\end{proof}
\begin{thm}\label{energy-pj}
The time-discrete projection scheme (\ref{pj-mom})-(\ref{pj-chem}) is energy stable, ${\rm i.e}$, 
\begin{align}
&E^{n+1}-E^{n}=\notag\\
&\bigg(\cfrac{1}{2} ||\sqrt{\rho^{n+1}}\boldsymbol{u}^{n+1}||^2_{L^2}+\cfrac{\eta\epsilon}{2We}||\sqrt{\rho^{n+1}}\boldsymbol{\nabla} c^{n+1}||^2_{L^2}+\int_{\Omega}\big(\cfrac{\eta}{\epsilon We}\rho^{n+1}F(c^{n+1})+\cfrac{1}{Fr}\rho^{n+1}  y\big){\rm d}\boldsymbol{x}\bigg)\notag\\
&-\bigg(\cfrac{1}{2} ||\sqrt{\rho^{n}}\boldsymbol{u}^{n}||^2_{L^2}+\cfrac{\eta\epsilon}{2We}||\sqrt{\rho^{n}}\boldsymbol{\nabla} c^{n}||^2_{L^2}+\int_{\Omega}\big(\cfrac{\eta}{\epsilon We}\rho^{n}F(c^{n})+\cfrac{1}{Fr}\rho^{n}  y\big){\rm d}\boldsymbol{x}\bigg)\notag\\
&=-\cfrac{\delta t}{Re}||\sqrt{\mu(c^{n})}\boldsymbol{\nabla} \tilde{\boldsymbol{u}}^{n+1}||^2_{L^2}-\cfrac{\delta t}{3Re}||\sqrt{\mu(c^{n})}\boldsymbol{\nabla} \cdot \tilde{\boldsymbol{u}}^{n+1}||^2_{L^2}\notag\\
&-\cfrac{\delta t}{MPe}||\sqrt{m(c^{n})}\boldsymbol{\nabla} \mu_{c}^{n+1}||^2_{L^2}- \cfrac{1}{2}||\sqrt{\rho^{n}}(\tilde{\boldsymbol{u}}^{n+1}-\boldsymbol{u}^{n})||^2_{L^2}\notag\\
&-\cfrac{1}{2} ||\sqrt{\rho^{n+1}}(\boldsymbol{u}^{n+1}-\tilde{\boldsymbol{u}}^{n+1})||^2_{L^2}\le 0,\label{adfsafds-dasd}
\end{align}
where $E^{n+1}$ is the total energy at time discrete level.
\end{thm}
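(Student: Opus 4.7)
The strategy is the same five-step testing argument used for the primitive method, but now with the momentum split across two equations (the viscous predictor (\ref{pj-mom}) and the pressure/body-force corrector (\ref{pj-pj})), so the kinetic-energy bookkeeping produces two separate numerical dissipation terms of the form $\tfrac{1}{2}\|\sqrt{\rho^n}(\tilde{\boldsymbol{u}}^{n+1}-\boldsymbol{u}^n)\|^2$ and $\tfrac{1}{2}\|\sqrt{\rho^{n+1}}(\boldsymbol{u}^{n+1}-\tilde{\boldsymbol{u}}^{n+1})\|^2$. As in the primitive proof, I first derive the discrete continuity equation $\delta t^{-1}(\rho^{n+1}-\rho^n)+\boldsymbol{\nabla}\cdot(\rho^{n+1}\boldsymbol{u}^{n+1})=0$, Eq.(\ref{part-phvar-2a1}), which is needed to rewrite the gravity work term and to run the skew-symmetry identity (\ref{homo-bc-u-1}).

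Next I would carry out five inner-product tests and sum them: (i) take the $L^2$ inner product of (\ref{pj-mom}) with $\delta t\,\tilde{\boldsymbol{u}}^{n+1}$; the Vanka-type identity (\ref{homo-bc-u-1}) kills the convective term, and the polarization identity $a\cdot(a-b)=\tfrac{1}{2}(|a|^2-|b|^2+|a-b|^2)$ together with the algebraic identity $\rho^n|\tilde{\boldsymbol{u}}^{n+1}|^2+(\rho^{n+1}-\rho^n)|\tilde{\boldsymbol{u}}^{n+1}|^2=\rho^{n+1}|\tilde{\boldsymbol{u}}^{n+1}|^2$ produce the jump $\tfrac{1}{2}\|\sqrt{\rho^{n+1}}\tilde{\boldsymbol{u}}^{n+1}\|^2-\tfrac{1}{2}\|\sqrt{\rho^n}\boldsymbol{u}^n\|^2$ plus the first numerical-dissipation quadratic and the viscous terms in $\tilde{\boldsymbol{u}}^{n+1}$; (ii) take the inner product of the projection step (\ref{pj-pj}) with $\delta t\,\boldsymbol{u}^{n+1}$; polarization yields $\tfrac{1}{2}\|\sqrt{\rho^{n+1}}\boldsymbol{u}^{n+1}\|^2-\tfrac{1}{2}\|\sqrt{\rho^{n+1}}\tilde{\boldsymbol{u}}^{n+1}\|^2+\tfrac{1}{2}\|\sqrt{\rho^{n+1}}(\boldsymbol{u}^{n+1}-\tilde{\boldsymbol{u}}^{n+1})\|^2$, while the gravity work $-\tfrac{\delta t}{Fr}(\rho^{n+1}\boldsymbol{j},\boldsymbol{u}^{n+1})$ is rewritten as $-\tfrac{1}{Fr}\int(\rho^{n+1}-\rho^n)y\,d\boldsymbol{x}$ using (\ref{part-phvar-2a1}) and (\ref{potential}); (iii)/(iv) test (\ref{pj-mass}) with $\delta t\,\bar p^{n+1}/M$ and (\ref{pj-phase}) with $\delta t\,\bar\mu_c^{n+1}/M$, integrating by parts in the homogeneous-BC setting (\ref{pj-bc}).

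Step (v) is the main work: test (\ref{pj-chem}) with $(c^{n+1}-c^n)/M$ and check that the four terms on the right reassemble exactly the time increments of the interfacial and bulk free energies. Here the choices of $g(c^{n+1},c^n)$ and $r(c^{n+1},c^n)$, via the identities $F(c^{n+1})-F(c^n)=g(c^{n+1},c^n)(c^{n+1}-c^n)$ and $\rho^{n+1}-\rho^n=r(c^{n+1},c^n)(c^{n+1}-c^n)$, together with the symmetric averaging $\rho^{n+1/2}$, $F^{n+1/2}$, $(\boldsymbol{\nabla} c\cdot\boldsymbol{\nabla} c)^{n+1/2}$, give (after writing $2ab=(a+b)(a+b')$-type splits) the exact telescoping
\begin{align*}
\tfrac{1}{M}(\rho^n\bar\mu_c^{n+1},c^{n+1}-c^n)_{L^2}
&=\tfrac{\eta}{\epsilon We}\!\int_\Omega\!\bigl(\rho^{n+1}F(c^{n+1})-\rho^n F(c^n)\bigr)\,d\boldsymbol{x}\\
&\quad+\tfrac{\eta\epsilon}{2We}\bigl(\|\sqrt{\rho^{n+1}}\boldsymbol{\nabla} c^{n+1}\|^2_{L^2}-\|\sqrt{\rho^n}\boldsymbol{\nabla} c^n\|^2_{L^2}\bigr),
\end{align*}
with the $\rho^n$ factor on the left (rather than $\rho^{n+1}$ as in the primitive method) being exactly what makes the cancellation with the phase-equation test consistent.

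Finally I add all five identities. The pressure-divergence term from (i)+(ii) cancels with its counterpart from (iii); the mixed term $(\rho^{n+1}\bar\mu_c^{n+1}\boldsymbol{\nabla} c^{n+1},\boldsymbol{u}^{n+1})$ cancels between (ii) and (iv); the $(\rho^n\bar\mu_c^{n+1},c^{n+1}-c^n)$ term cancels between (iv) and (v); and the three mobility-weighted quadratics combine through $\mu_c=\bar\mu_c+\alpha\bar p$ as
\begin{equation*}
\tfrac{\delta t}{MPe}\bigl\|\sqrt{m(c^n)}\boldsymbol{\nabla}(\bar\mu_c^{n+1}+\alpha\bar p^{n+1})\bigr\|_{L^2}^2
=\tfrac{\delta t}{MPe}\|\sqrt{m(c^n)}\boldsymbol{\nabla}\mu_c^{n+1}\|_{L^2}^2,
\end{equation*}
producing exactly the right-hand side of (\ref{adfsafds-dasd}). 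The main obstacle I anticipate is bookkeeping in step (v) to make sure the averaged density $\rho^{n+1/2}$ pairs correctly with the averaged $F$, $\boldsymbol{\nabla} c\cdot\boldsymbol{\nabla} c$, and the divergence term to telescope; and making sure the $\rho^n$ on the left of (\ref{pj-chem}), as opposed to $\rho^{n+1}$, is compatible with the factor in (\ref{pj-phase}) so that the phase/chemical cross-cancellation is clean.
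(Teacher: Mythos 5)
Your proposal is correct and follows exactly the route the paper itself prescribes: the paper omits a detailed proof of this theorem and instead gives (in the remark following it) precisely your five-test recipe — multiply (\ref{pj-mom}) by $\delta t\tilde{\boldsymbol{u}}^{n+1}$, (\ref{pj-pj}) by $\delta t\boldsymbol{u}^{n+1}$, (\ref{pj-mass}) by $\delta t\bar{p}^{n+1}/M$, (\ref{pj-phase}) by $\delta t\bar{\mu}_{c}^{n+1}/M$, (\ref{pj-chem}) by $(c^{n+1}-c^{n})/M$, integrate by parts with the homogeneous boundary conditions, and sum. Your filled-in details (the two polarization identities producing the two numerical-dissipation terms, the gravity rewriting via (\ref{part-phvar-2a1}), the $\rho^{n}$-vs-$\rho^{n+1}$ compatibility between (\ref{pj-phase}) and (\ref{pj-chem}), the symmetric-average telescoping, and the completion of the square to recover $\boldsymbol{\nabla}\mu_{c}^{n+1}$) all check out and match the corresponding steps in the paper's proof of the primitive-method analogue.
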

\begin{rmk}
Here we omit the details of the proof, as the derivations here are similar with the proof for the primitive method in many aspects. The primary differences are that in the projection method, we have one more projection equation (\ref{pj-pj}), and the mass conservation equation (\ref{part-phvar-2a1}) here is slightly different to that of the primitive methods, moreover the test functions are different as well. In particular, to show the energy stability of the projection method, we multiply Eq.(\ref{pj-mom}) by $\delta t\tilde{\boldsymbol{u}}^{n+1}$, Eq.(\ref{pj-pj}) by $\delta t\boldsymbol{u}^{n+1}$, Eq.(\ref{pj-mass}) by $\delta t\bar{p}^{n+1}/ M$, Eq.(\ref{pj-phase}) by ${\delta t\bar{\mu}_{c}}^{n+1}/M$ and Eq.(\ref{pj-chem}) by $(c^{n+1}-c^{n})/M$. After using the integration-by-parts with the homogeneous boundary conditions, we sum up the resulted relations to obtain the energy stability for the projection method (\ref{adfsafds-dasd}). 
\end{rmk}
\begin{rmk}
Both numerical methods are highly coupled and non-linear, however we never observe a problem with existence and uniqueness of the solution in all our extensive numerical experiments. Here we refer to \cite{Han20177} for some results and analysis methods about this issue.
\end{rmk}
\section{Finite Difference Discretization on Staggered Grid}
Before we present our fully discrete finite difference schemes, we first show some basic definitions and notations for the finite difference discretization on a staggered grid. Here we use the notation and results for cell-centered functions from \cite{SteveMultigrid-2010, SteveMultigrid2013, SteveMultigrid-2009}. Let $\Omega = (0,L_x )\times(0,L_y )$, with $L_x = m_{1} \cdot h$ and $L_y = m_{1} \cdot h$, where $m_{1}$ and $m_{2}$ are positive integers and $h > 0$ is the spatial step size. For simplicity we assume that $L_{x}=L_{y}$. Consider the following four sets
\begin{align}
E_{m_{1}} &=\{ x_{i+\frac{1}{2}} | i = 0,\cdots,{m_{1}}  \},~~~~E_{\overline{m}_{1}} =\{ x_{i+\frac{1}{2}} | i = -1,\cdots,{m_{1}+1}  \},\\
C_{m_{1}}&=\{  x _{i} | i = 1, \cdots,m_{1} \},~~~~~~~~C_{\overline{m}_{1}}=\{  x_{i} | i = 0, \cdots,m_{1}+1\},
\end{align}
where $x_{i+\frac{1}{2}}=i\cdot h$ and $x_{i}=(i-\frac{1}{2})\cdot h$. Here $E_{m_{1}}$ and $E_{\overline{m}_{1}}$ are called the uniform partition of $[0,L_{x}]$ of size $m_1$, and its elements are called edge-centered points. The two points belonging to $E_{\overline{m}_1}$\big\backslash $E_{m_1}$ are called ghost points. The elements of $C_{m_1}$ and $C_{\overline{m}_1}$ are called cell-centered points. Again, the two points belonging to $C_{\overline{m}_1}$\big\backslash $C_{m_1}$ are called ghost points. Analogously, the sets $E_{m_{2}}$ and $E_{\overline{m}_{2}}$ contain the edge-centered points, and $C_{m_2}$ and $C_{\overline{m}_{2}}$ contain the cell-centered points of the interval $[0,L_y]$.\\
We then define the following function spaces
\begin{align}
&\mathcal{C}_{m_{1}\times m_{2}} = \{ \phi: C_{m_{1}}\times C_{m_{2}}\rightarrow  \boldsymbol{R}\},\hspace{2 mm} \mathcal{V}^{vc}_{m_{1}\times m_{2}} = \{f: E_{m_{1}}\times E_{m_{2}}\rightarrow  \boldsymbol{R}\},\\
&\mathcal{E}^{ew}_{m_{1}\times m_{2}} = \{u: E_{m_{1}}\times C_{m_{2}}\rightarrow  \boldsymbol{R}\},\hspace{2 mm} \mathcal{E}^{ns}_{m_{1}\times m_{2}} = \{ v: C_{m_{1}}\times E_{m_{2}}\rightarrow  \boldsymbol{R}\},
\end{align}
for cell-centered functions, vertex-centered functions, east-west edge-centered functions and north-south edge-centered functions respectively. Due to the different locations of the functions, we define several average and difference operators as follows:
\begin{align}
{\rm edge~to~center~average~and~difference}:&~a_{x},~a_{y},~d_x,~d_{y};\notag\\
{\rm center~to~edge~average~and~difference}:&~A_{x},~A_{y},~D_x,~D_{y};\notag\\
{\rm vertex~to~edge~average~and~difference}:&~\mathfrak{A}_{x},~\mathfrak{A}_{y},~\mathfrak{D}_{x},~\mathfrak{D}_{y};\notag\\
{\rm edge~to~vertex~average~and~difference}:&~\mathcal{A}_{x},~\mathcal{A}_{y},~\mathcal{D}_{x},~\mathcal{D}_{y};\notag\\
{\rm center~to~vertex~average}:&~\mathcal{A}.\notag
\end{align}
We also define an average operator $\boldsymbol{A}=(\begin{smallmatrix} 
A_{x} & 0 \\
0 & A_{y}
\end{smallmatrix})$ and the following divergence operator:
\begin{align}
&\boldsymbol{\nabla}_{\hspace{-1mm}d}=(d_{x},d_{y}),~~\boldsymbol{\nabla}_{\hspace{-1mm}D}=(D_{x},D_{y}),~~\boldsymbol{\nabla}_{\hspace{-1mm}(d,\mathcal{D})}=(d_{x},\mathcal{D}_{y}),\notag\\
&\boldsymbol{\nabla}_{\hspace{-1mm}(\mathcal{D},d)}=(\mathcal{D}_{x},d_{y}),~~\boldsymbol{\nabla}_{\hspace{-1mm}(D,\mathfrak{D})}=(D_{x},\mathfrak{D}_{y}),~~~~\boldsymbol{\nabla}_{\hspace{-1mm}(\mathfrak{D},D)}=(\mathfrak{D}_{x},D_{y}).
\end{align}
We refer the reader to $\ref{app-A1}$ and $\ref{app-A2}$ for a description of our notations for the above spaces and operators. Moreover $(\cdot ,\cdot)_{2}$, and $[\cdot,\cdot ]_{ew}$, $[\cdot,\cdot ]_{ns}$, and $\langle\cdot ,\cdot \rangle_{vc}$ denote the fully discrete inner product of the cell-centered, edge-centered and vertex-centered variables respectively which are defined in $\ref{app-A3}$. \\
Note that in this paper, the cell-centered functions are the phase variable $c$, chemical potential $\mu_{c}$, $\bar{\mu}_{c}$, and pressure $p$ and $\bar{p}$, the east-west edge-centered function is the x-component of the velocity, $u$ and $\tilde{u}$(for projection method), and the north-south edge-centered function is the y-component of the velocity, $v$ and $\tilde{v}$ (for the projection method). 
\section{Fully Discrete Mass Conservative, Energy Stable Schemes}
In this section we describe and analyze our two staggered grid finite difference schemes for q-NSCH model. We show that the property of mass conservation and energy stability can be achieved at the fully discrete level for both schemes.
\subsection{Fully Discrete Primitive Method}
The fully-discrete scheme for the primitive method (\ref{mm-mom-1})-(\ref{mm-chem-1}) is the following: Let $\delta t>0$ represent the time step, and the grid functions $c^{n}, {\bar{\mu}}_{c}^{n}, \bar{p}^{n} \in \mathcal {C}_{\overline{m}_{1}\times \overline{m}_{2}}$, $u^{n}\in  \mathcal {E}^{ew}_{m_{1}\times m_{2}}$ and $v^{n}\in \mathcal {E}^{ns}_{m_{1}\times m_{2}}$, and $\boldsymbol{u}^{n}=(u^{n},v^{n})$ be the solution at time $t=n\delta t$, find $c^{n+1}, {\bar{\mu}}^{n+1}_{c}, \bar{p}^{n+1} \in  \mathcal {C}_{\overline{m}_{1}\times \overline{m}_{2}}$, $u^{n+1}\in  \mathcal {E}^{ew}_{m_{1}\times m_{2}}$, $v^{n+1}\in \mathcal {E}^{ns}_{m_{1}\times m_{2}}$, and $\boldsymbol{u}^{n+1}=(u^{n+1},v^{n+1})$ at $t=(n+1)\delta t$ such that:
\begin{align}
&\boldsymbol{A}\rho^{n} \cfrac{\boldsymbol{u}^{n+1}-\boldsymbol{u}^{n}}{\delta t}+\rho^{n}\boldsymbol{u}^{n}\cdot \boldsymbol{\nabla}\boldsymbol{u}^{n+1}+\cfrac{\boldsymbol{A}\rho^{n+1}-\boldsymbol{A}\rho^{n}}{2\delta t}\boldsymbol{u}^{n+1}+\cfrac{1}{2}\boldsymbol{\nabla}\cdot (\rho ^{n}\boldsymbol{u} ^{n})\boldsymbol{u}^{n+1}\notag\\
&=-\cfrac{1}{M} \boldsymbol{\nabla}_{\hspace{-1mm}D}~\bar{p}^{n+1} +\cfrac{1}{M}\rho^{n} {\bar{\mu}_{c}}^{n+1}\boldsymbol{\nabla}~c^{n}+\cfrac{1}{Re}\boldsymbol{\nabla}_{\hspace{-1mm}(D,\mathfrak{D})}\cdot (\mu(c^{n}) \boldsymbol{\nabla}_{\hspace{-1mm}(d,\mathcal{D})} \boldsymbol{u}^{n+1})\notag\\
&+\cfrac{1}{3Re}\boldsymbol{\nabla}_{\hspace{-1mm}D}\big(\mu(c^{n})\boldsymbol{\nabla}_{\hspace{-1mm}d}\cdot  \boldsymbol{u}^{n+1} \big)-\cfrac{1}{Fr}\boldsymbol{A}\rho^{n} g \boldsymbol{j},\label{pm-num-mx}\\
&\boldsymbol{\nabla}_{\hspace{-1mm}d}\cdot \boldsymbol{u}^{n+1}=\cfrac{\alpha}{Pe} \boldsymbol{\nabla}_{\hspace{-1mm}d} \cdot   \big(\boldsymbol{ A}m(c^{n}) \boldsymbol{\nabla}_{\hspace{-1mm}D} {\bar{\mu}_{c}}^{n+1}\big)+\cfrac{\alpha^2}{Pe} \boldsymbol{\nabla}_{\hspace{-1mm}d} \cdot   \big(\boldsymbol{ A}m(c^{n}) \boldsymbol{\nabla}_{\hspace{-1mm}D} {\bar{p}}^{n+1}\big),\label{pm-num-mass}\\
&\rho^{n+1}\cfrac{c^{n+1}-c^{n}}{\delta t}+\rho^{n}\boldsymbol{u}^{n+1}\cdot \boldsymbol{\nabla}c^n=\cfrac{1}{Pe} \boldsymbol{\nabla}_{\hspace{-1mm}d} \cdot   \big(\boldsymbol{ A}m(c^{n}) \boldsymbol{\nabla}_{\hspace{-1mm}D} {\bar{\mu}_{c}}^{n+1}\big)\notag\\
&+\cfrac{\alpha}{Pe} \boldsymbol{\nabla}_{\hspace{-1mm}d} \cdot   \big(\boldsymbol{ A}m(c^{n}) \boldsymbol{\nabla}_{\hspace{-1mm}D} {\bar{p}}^{n+1}\big),\label{pm-num-phase}\\
&\rho^{n+1}{\bar{\mu}_{c}}^{n+1}=\cfrac{M\eta}{\epsilon We}\rho^{n+\frac{1}{2}}g(c^{n+1},c^{n})+\cfrac{M\eta}{\epsilon We} F^{n+\frac{1}{2}}(c)~r(c^{n+1},c^{n})\notag\\
&+\cfrac{\epsilon\eta M}{2 We} \lVert\boldsymbol{\nabla }_{D}c\rVert_{2}^{n+\frac{1}{2}}~r(c^{n+1},c^{n})-\cfrac{\epsilon\eta M}{ We}\boldsymbol{\nabla}_{\hspace{-1mm}d}\cdot  (\boldsymbol{A}\rho^{n+\frac{1}{2}}\boldsymbol{\nabla}_{\hspace{-1mm}D}~c^{n+\frac{1}{2}}).\label{pm-num-chem}
\end{align}
In Eq.(\ref{pm-num-mx}), we let
\begin{align}
\rho^{n}\boldsymbol{u}^{n}\cdot \boldsymbol{\nabla}\boldsymbol{u}^{n+1}&=\begin{pmatrix}
A_{x}\big(\rho^{n} a_{x}u^{n}d_{x}u^{n+1}\big)+ \mathfrak{A}_{y}\big(\mathcal{A}\rho^{n}\mathcal{A}_{x}v^{n} \mathcal{D}_{y}u^{n+1}\big)\\
 \mathfrak{A}_{x} (\mathcal{A}\rho^{n} \mathcal{A}_{y}u^{n}\mathcal{D}_{x} v^{n+1}\big)+A_{y}\big(\rho^{n}a_{y}v^{n}d_{y} v^{n+1}\big)
\end{pmatrix},\label{e1e1-1}\\
\cfrac{1}{2}\boldsymbol{\nabla}\cdot (\rho^{n} \boldsymbol{u}^{n})\boldsymbol{u}^{n+1}&=\begin{pmatrix}
\big(D_{x} (\mathcal{A}\rho^{n} \mathcal{A}_{y}u^{n})+\mathfrak{D}_{y} (\rho^{n} a_{y}v^{n})\big)u^{n+1}\\
\big(\mathfrak{D}_{x} (\mathcal{A}\rho^{n} \mathcal{A}_{y}u^{n})+D_{y} (\rho^{n} a_{y}v^{n})\big)v^{n+1}
\end{pmatrix},\label{e1e1-2}\\
\rho^{n} {\bar{\mu}_{c}}^{n+1}\boldsymbol{\nabla}c^{n}&=\begin{pmatrix}
A_{x}(\rho^{n}_{ew} {\bar{\mu}_{c}}^{n+1})D_{x} c^{n}\\
A_{y}(\rho^{n}_{ns} {\bar{\mu}_{c}}^{n+1})D_{y} c^{n}
\end{pmatrix}.\label{e1e1}
\end{align}
In Eq.(\ref{pm-num-phase}) we let
\begin{align}
\rho^{n}\boldsymbol{u}^{n+1}\cdot \boldsymbol{\nabla}c^n=a_{x} (\rho^{n}_{ew}D_{x}c^{n} u^{n+1})+a_{y} (\rho^{n}_{ns}D_{y}c^{n} v^{n+1}).\label{e1e2}
\end{align}
Note that the special discretization for the advection terms $a_{x}(\rho^{n}_{ew}u^{n+1}D_x c^{n})$ and $a_{y}(\rho^{n}_{ns}v^{n+1}D_y c^{n})$ in (\ref{e1e2}), and the surface tension terms $A_{x}(\rho^{n}_{ew} {\bar{\mu}_{c}}^{n+1})D_{x} c^{n}$  and $A_{y}(\rho^{n}_{ns} {\bar{\mu}_{c}}^{n+1})D_{y} c^{n}$ in (\ref{e1e1}),  are introduced in $\ref{mass-der-rho-dis}$ and $\ref{surface-ten}$ respectively. These discretizations are critical for deriving the fully mass conservation and energy stability of our primitive method. Moreover, $\lVert\boldsymbol{\nabla }_{D}c^{n+\frac{1}{2}}\rVert_{2}=\lVert\boldsymbol{\nabla }_{D}c^{n+1}\rVert_{2}+\lVert\boldsymbol{\nabla }_{D}c^{n}\rVert_{2}$ is the temperal average of the norm $\lVert\boldsymbol{\nabla }_{D}c\rVert_{2}$, which is described in \ref{app-norm}. The following expression for $m(c^{n+1})$ is used for the computations:
\begin{align}
m(c^{n+1})=\sqrt{(c^{n+1})^2(1-{c^{n+1}})^2+\epsilon}.
\end{align}
We assume the cell-centered functions satisfy the following homogeneous Neumann boundary conditions
\begin{align}
\boldsymbol{n}\cdot \boldsymbol{\nabla}_{D}~c^{n+1}\rvert_{\partial \Omega}=\boldsymbol{n}\cdot \boldsymbol{\nabla}_{D}~\mu_{c}^{n+1}\rvert_{\partial \Omega}=0,\label{pm-dis-bc1}
\end{align}
and the velocity $\boldsymbol{u}^{n+1}=(u^{n+1},v^{n+1})$ satisfies the no-slip boundary condition
\begin{align}
u^{n+1}\rvert_{\partial \Omega}=v^{n+1}\rvert_{\partial \Omega}=0.\label{pm-dis-bc2}
\end{align}
A detailed description of the discrete boundary conditions is provided in $\ref{app-bc}$.
\begin{thm}\label{them-mass-conservation}
The fully discrete primitive scheme (\ref{pm-num-mx})-(\ref{pm-num-chem}) is mass conservative for the binary fluid, ${\rm i.e}$, 
\begin{align}
\big(\rho^{n+1}, 1\big)_2&=\big(\rho^{n}, 1\big)_2,~~\hspace{0.5mm}~~\forall n\ge 0.
\end{align}
\end{thm}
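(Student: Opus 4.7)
The plan is to mimic the semi-discrete argument used in Theorem 4.3 of the paper at the fully discrete level, namely to derive a discrete continuity equation from the coupling between the quasi-incompressibility equation (\ref{pm-num-mass}) and the phase equation (\ref{pm-num-phase}), and then sum over all cells to exploit the no-slip boundary condition (\ref{pm-dis-bc2}).

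First, I would divide the discrete mass equation (\ref{pm-num-mass}) by $\alpha$ and subtract it from the discrete phase equation (\ref{pm-num-phase}). Since both right-hand sides contain the same diffusion terms $(1/Pe)\boldsymbol{\nabla}_{\hspace{-1mm}d}\cdot(\boldsymbol{A}m(c^n)\boldsymbol{\nabla}_{\hspace{-1mm}D}\bar{\mu}_c^{n+1})$ and $(\alpha/Pe)\boldsymbol{\nabla}_{\hspace{-1mm}d}\cdot(\boldsymbol{A}m(c^n)\boldsymbol{\nabla}_{\hspace{-1mm}D}\bar{p}^{n+1})$ (up to the factor $\alpha$), they cancel exactly, leaving the discrete analog of (\ref{new-sub}):
\begin{equation*}
\rho^{n+1}\frac{c^{n+1}-c^n}{\delta t} + a_x(\rho^n_{ew}D_x c^n\, u^{n+1}) + a_y(\rho^n_{ns}D_y c^n\, v^{n+1}) = \frac{1}{\alpha}\boldsymbol{\nabla}_{\hspace{-1mm}d}\cdot\boldsymbol{u}^{n+1}.
\end{equation*}

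Next, I would multiply this cell-centered relation by $-\alpha\rho^n$. For the temporal term, applying the identity (\ref{def-r}) together with the definition (\ref{def-rhoal}) gives $-\alpha\rho^n\rho^{n+1}(c^{n+1}-c^n)=r(c^{n+1},c^n)(c^{n+1}-c^n)=\rho^{n+1}-\rho^n$. For the advection terms, the argument hinges on the special choice of the edge-averaged densities $\rho^n_{ew}$ and $\rho^n_{ns}$ introduced in Appendix \ref{mass-der-rho-dis}: these are designed so that the discrete chain rule $-\alpha\rho^n\cdot a_x(\rho^n_{ew}D_x c^n\,u^{n+1}) + \rho^n\cdot d_x u^{n+1} = d_x(\rho^n_{ew} u^{n+1})$ holds identically (and analogously in $y$), which is the discrete counterpart of $-\alpha(\rho^n)^2\boldsymbol{\nabla} c^n=\boldsymbol{\nabla}\rho^n$ combined with the product rule. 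Collecting everything yields the fully discrete continuity equation
\begin{equation*}
\frac{\rho^{n+1}-\rho^n}{\delta t} + d_x(\rho^n_{ew}u^{n+1}) + d_y(\rho^n_{ns}v^{n+1}) = 0.
\end{equation*}

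Finally, I would take the discrete $L^2$ inner product of this identity with the constant function $1$. By the standard summation-by-parts property of the operators $d_x, d_y$ on cell-centered fields and the no-slip boundary condition $u^{n+1}|_{\partial\Omega}=v^{n+1}|_{\partial\Omega}=0$, the divergence terms telescope to zero, and we obtain $(\rho^{n+1}-\rho^n,1)_2=0$, which is the claimed mass conservation.

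The main obstacle is Step 2: establishing the discrete product rule on staggered grids. This requires matching the cell-centered factor $\rho^n$ with the edge-centered average $\rho^n_{ew}$ (respectively $\rho^n_{ns}$) and verifying that the specific averaging adopted in the appendix is exactly the one that makes $-\alpha\rho^n\rho^n_{ew}D_x c^n$ equal to $D_x\rho^n$ along east–west edges. Once this identity is in hand, the rest is routine telescoping.
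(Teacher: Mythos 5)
Your proposal is correct and follows essentially the same route as the paper: substitute the quasi-incompressibility equation into the phase equation to cancel the diffusive fluxes, multiply by $-\alpha\rho^{n}$, invoke the identity $\rho_{i+1,j}-\rho_{i,j}=-\alpha\rho_{i+1,j}\rho_{i,j}(c_{i+1,j}-c_{i,j})$ behind the one-sided averages $\rho^{n}_{ew},\rho^{n}_{ns}$, and telescope the resulting conservative flux using the no-slip boundary condition. The only (harmless) imprecision is that the conservative flux produced by the discrete product rule is $d_{x}(A_{x}\rho^{n}u^{n+1})+d_{y}(A_{y}\rho^{n}v^{n+1})$ with the symmetric center-to-edge average $A_{x}\rho^{n}$, not $d_{x}(\rho^{n}_{ew}u^{n+1})$ — the quantity $\rho_{ew}$ is not a single-valued edge function — but since any discrete divergence of an edge flux sums to the same boundary terms, your conclusion is unaffected.
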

\begin{proof}
Multiply Eq.(\ref{pm-num-mass}) by $1/\alpha$ and substituting into Eq.(\ref{pm-num-phase}), we obtain
\begin{align}
&\rho^{n+1}\cfrac{c^{n+1}-c^{n}}{\delta t}+a_{x} (\rho^{n}_{ew}D_{x}c^{n} u^{n+1})+a_{y} (\rho^{n}_{ns}D_{y}c^{n} v^{n+1})= \cfrac{1}{\alpha}d_{x} u^{n+1}+\cfrac{1}{\alpha}d_{y} v^{n+1}.
\end{align}
Multiplying the above equation by $-\delta t~\alpha \rho^{n}$, we obtain the continuity equation at the fully discrete level
\begin{align}
\big((\rho^{n+1}-\rho^{n}),1\big)_2=&-\delta t~\big(a_{x} (u^{n+1} D_{x}\rho^{n} ),1\big)_2-\delta t~\big(a_{y} (v^{n+1}D_{y}\rho^{n} ),1\big)_2\notag\\
& -\delta t~ \big(d_{x} u^{n+1}~\rho^{n},1\big)_2-\delta t~\big( d_{y} v^{n+1}~\rho^{n},1\big)_2,\label{pm-mass-con-dis0}
\end{align}
where we have used the relation (\ref{def-r}) and the following identity 
\begin{align}
&\delta t~\bigg(\big(a_{x} (\rho^{n}_{ew}D_{x}c^{n} u^{n+1})+a_{y} (\rho^{n}_{ns}D_{y}c^{n} v^{n+1})\big), -\alpha\rho^{n}\bigg)_{2}\notag\\
&=\delta t~\big(a_{x} (u^{n+1} D_{x}\rho^{n} ),1\big)_2+\delta t~\big(a_{y} (v^{n+1}D_{y}\rho^{n} ),1\big)_{2}.\label{afdsf-mass}
\end{align}
Note that the definition of $\rho_{ew}$, $\rho_{ns}$ and a detailed derivation of (\ref{afdsf-mass}) are given in $\ref{mass-der-rho-dis}$. Applying summation-by-parts to Eq.(\ref{pm-mass-con-dis0}) and utilizing the homogeneous boundary conditions (\ref{pm-dis-bc2}), we obtain the fully discrete mass conservation for the binary fluid:
\begin{align}
\big((\rho^{n+1}-\rho^{n}),1\big)_2=&\delta t~\big(d_{x}(A_{x}\rho^{n}u^{n+1})+d_{y}(A_{y}\rho^{n}v^{n+1}), 1\big)_2=0.\label{pm-mass-con-dis}
\end{align}
\end{proof}
\begin{thm}\label{pm-en-stable-theo}
The fully discrete primitive scheme (\ref{pm-num-mx})-(\ref{pm-num-chem}) is energy stable at the fully discrete level, ${\rm i.e}$, 
\begin{align}
E^{n+1}_{h}-E^{n}_{h}&=\bigg(\cfrac{1}{2}|| \sqrt{\rho^{n+1}}\boldsymbol{ u}^{n+1}||_{2}^2+\cfrac{\epsilon\eta }{2 We}\rVert \sqrt{\rho^{n+1}}\boldsymbol{\nabla}_{D} c^{n+1}\rVert_2^2 +\cfrac{\eta h^2}{\epsilon We}\big(\rho^{n+1} F(c^{n+1}),1\big)_{2}\notag\\
&+\cfrac{h^2}{Fr\delta t}\big(\rho^{n+1}y, 1\big)_{2}\bigg) \notag\\
&-\bigg(\cfrac{1}{2}|| \sqrt{\rho^{n}} \boldsymbol{u}^{n}||_{2}^2+ \cfrac{\epsilon\eta  }{2 We}\rVert \sqrt{\rho^{n}}\boldsymbol{\nabla}_{D} c^{n}\rVert_2^2)+\cfrac{\eta h^2}{\epsilon We}\big(\rho^{n} F(c^{n}), 1\big)_{2}+\cfrac{h^2}{Fr}\big( \rho^{n} y, 1\big)_{2}\bigg)\notag\\
&=-\cfrac{\delta t}{Re}||\sqrt{\mu(c^{n})}\boldsymbol{\nabla}_{d} \boldsymbol{u}^{n+1}||^2_2-\cfrac{\delta t}{3Re}||\sqrt{\mu(c^{n})}\boldsymbol{\nabla}_{d}\cdot \boldsymbol{u}^{n+1}||^2_2\notag\\
&-\cfrac{\delta t}{Pe}||\sqrt{ m(c^{n})}\boldsymbol{\nabla}_{D} {\mu_{c}}^{n+1}||_{2}^2-\cfrac{1}{2}|| \sqrt{\rho^{n}} (\boldsymbol{u}^{n+1}-\boldsymbol{u}^{n})||_{2}^2\le 0.\label{asdfas-asdfsa-1}
\end{align}
where $E^{n+1}_{h}$ is the total energy of the system at the fully discrete level. Here all the norms are defined by Eqs.(\ref{norm-rho-vel})-(\ref{norm-grad-dot-vel}). Here $\rVert \cdot\rVert_{2}$ is the fully discrete norm that is defined in $\ref{app-norm}$.
\end{thm}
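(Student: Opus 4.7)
The plan is to mimic the time-discrete proof of Theorem 4.2, replacing integration-by-parts with summation-by-parts on the staggered grid, and to rely on the specially designed discretizations of the advection term (\ref{e1e2}) and the surface tension term (\ref{e1e1}) (described in \ref{mass-der-rho-dis} and \ref{surface-ten}) to obtain the crucial cancellations that close the energy identity. First I would test each of the four equations with its energy-conjugate variable: take the edge-centered inner products $[\cdot,\cdot]_{ew}$ and $[\cdot,\cdot]_{ns}$ of (\ref{pm-num-mx}) against $\delta t\,\boldsymbol{u}^{n+1}$, the cell-centered inner product $(\cdot,\cdot)_2$ of (\ref{pm-num-mass}) against $\delta t\,\bar{p}^{n+1}/M$, of (\ref{pm-num-phase}) against $\delta t\,\bar{\mu}_c^{n+1}/M$, and of (\ref{pm-num-chem}) against $(c^{n+1}-c^{n})/M$. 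Each resulting identity is the discrete analogue of the corresponding line in the continuous proof (cf.\ (\ref{method-inner-mom2-1})--(\ref{method-inner-chem-1-1})), with summation-by-parts moving derivatives across inner products and the homogeneous boundary conditions (\ref{pm-dis-bc1})--(\ref{pm-dis-bc2}) killing the boundary contributions.

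The heart of the calculation is the momentum identity. I would process the time-derivative and transport blocks of (\ref{pm-num-mx}) together by exploiting the discrete continuity equation (\ref{pm-mass-con-dis}) already proved in Theorem \ref{them-mass-conservation}: combining $\boldsymbol{A}\rho^{n}\,\delta_t\boldsymbol{u}^{n+1}$, the convective part (\ref{e1e1-1}), the symmetrized term (\ref{e1e1-2}), and the $(\boldsymbol{A}\rho^{n+1}-\boldsymbol{A}\rho^{n})/2\delta t$ piece gives the telescoping kinetic energy increment
\begin{equation*}
\tfrac12\|\sqrt{\rho^{n+1}}\boldsymbol{u}^{n+1}\|_2^2-\tfrac12\|\sqrt{\rho^{n}}\boldsymbol{u}^{n}\|_2^2+\tfrac12\|\sqrt{\rho^{n}}(\boldsymbol{u}^{n+1}-\boldsymbol{u}^{n})\|_2^2,
\end{equation*}
which is the discrete analogue of (\ref{homo-bc-u-1}) combined with the algebraic identity $a(a-b)=\tfrac12(a^2-b^2)+\tfrac12(a-b)^2$. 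The viscous terms yield $-\tfrac{\delta t}{Re}\|\sqrt{\mu(c^n)}\boldsymbol{\nabla}_d \boldsymbol{u}^{n+1}\|_2^2-\tfrac{\delta t}{3Re}\|\sqrt{\mu(c^n)}\boldsymbol{\nabla}_d\cdot\boldsymbol{u}^{n+1}\|_2^2$ after summation-by-parts. The gravity contribution is handled via the discrete analogue of (\ref{potential}), using (\ref{pm-mass-con-dis}) to rewrite $(\boldsymbol{A}\rho^n\boldsymbol{j},\boldsymbol{u}^{n+1})$ as the telescoping potential-energy increment $\tfrac{1}{Fr}((\rho^{n+1}-\rho^{n})y,1)_2/\delta t$.

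The remaining work is to show that the mixed terms $\tfrac{\delta t}{M}(\bar{p}^{n+1},\boldsymbol{\nabla}_d\!\cdot\!\boldsymbol{u}^{n+1})_2$ and $\tfrac{\delta t}{M}(\rho^n\bar{\mu}_c^{n+1}\boldsymbol{\nabla}c^n,\boldsymbol{u}^{n+1})$ produced by the pressure and surface-tension terms in the momentum identity cancel against the corresponding terms produced by testing (\ref{pm-num-mass}) with $\bar{p}^{n+1}/M$ and (\ref{pm-num-phase}) with $\bar{\mu}_c^{n+1}/M$. The pressure cancellation is straightforward; the surface-tension cancellation is the main obstacle, since the advection discretization (\ref{e1e2}) and the surface-tension discretization (\ref{e1e1}) are non-obvious --- this is precisely where the conservative forms involving $\rho^n_{ew}$ and $\rho^n_{ns}$ matter, and the discrete duality $\big(a_x(\rho^n_{ew}D_xc^n u^{n+1}),\bar{\mu}_c^{n+1}\big)_2=\big(A_x(\rho^n_{ew}\bar{\mu}_c^{n+1})D_xc^n,u^{n+1}\big)_{ew}$ (plus the $y$-analogue) from \ref{surface-ten} is the required summation-by-parts identity.

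Finally, testing (\ref{pm-num-chem}) with $(c^{n+1}-c^n)/M$ generates the telescoping Cahn--Hilliard free-energy increment on the left-hand side through the key pointwise identities $F(c^{n+1})-F(c^n)=g(c^{n+1},c^n)(c^{n+1}-c^n)$ and $\rho(c^{n+1})-\rho(c^n)=r(c^{n+1},c^n)(c^{n+1}-c^n)$, combined with the discrete product-rule identity $(\rho^{n+1}|\boldsymbol{\nabla}_Dc^{n+1}|^2-\rho^n|\boldsymbol{\nabla}_Dc^n|^2)=\rho^{n+1/2}(|\boldsymbol{\nabla}_Dc^{n+1}|^2-|\boldsymbol{\nabla}_Dc^n|^2)+(\rho^{n+1}-\rho^n)\lVert\boldsymbol{\nabla}_Dc\rVert_2^{n+1/2}/1$, matched against the $\boldsymbol{A}\rho^{n+1/2}$ discretization in the last term of (\ref{pm-num-chem}) after summation-by-parts. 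On the right-hand side it produces $\tfrac{1}{M}\big(\rho^{n+1}\bar{\mu}_c^{n+1},c^{n+1}-c^n\big)_2$, which cancels the matching term coming from the phase equation test. Summing the four identities then cancels all mixed pressure/chemical-potential/advection terms and leaves precisely the dissipation structure (\ref{asdfas-asdfsa-1}), with the three dissipation norms and the numerical dissipation $-\tfrac12\|\sqrt{\rho^n}(\boldsymbol{u}^{n+1}-\boldsymbol{u}^n)\|_2^2$, completing the proof.
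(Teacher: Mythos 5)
Your proposal is correct and follows essentially the same route as the paper's proof: test the four equations with $\delta t\,\boldsymbol{u}^{n+1}$, $\delta t\,\bar{p}^{n+1}/M$, $\delta t\,\bar{\mu}_c^{n+1}/M$ and $(c^{n+1}-c^n)/M$, apply summation-by-parts with the homogeneous boundary conditions, invoke the discrete continuity equation for the kinetic and gravitational telescoping, use the special advection/surface-tension discretizations of \ref{mass-der-rho-dis} and \ref{surface-ten} for the crucial duality cancellation, and use the $g$ and $r$ identities for the free-energy telescoping before summing. The only (cosmetic) difference is that you carry the $1/M$ factor consistently in the test functions for (\ref{pm-num-mass}) and (\ref{pm-num-phase}), which in fact matches the time-discrete proof and is the normalization under which the mixed terms actually cancel.
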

\begin{proof}
Multiplying Eq.(\ref{pm-num-mx}) by $\delta t \boldsymbol{u}^{n+1}=\delta t(u^{n+1},v^{n+1})$ in x and y direction respectively, using summation-by-parts equations and dropping the boundary terms, we obtain
\begin{align}
\cfrac{1}{2} \rVert \sqrt{\rho^{n+1}} \boldsymbol{u}^{n+1} \rVert^{2}_{2}&-\cfrac{1}{2} \rVert \sqrt{\rho^{n}} \boldsymbol{u}^{n} \rVert^{2}_{2}+\cfrac{h^2}{Fr}\big((\rho^{n+1}-\rho^{n}) y ,1\big)_{2}\notag\\
=&-\cfrac{1}{2}\rVert \sqrt{\rho^{n}}(\boldsymbol{u}^{n+1}-\boldsymbol{u}^{n}) \rVert ^{2}_{2}+\cfrac{h^2\delta t}{M}\big(\boldsymbol{\nabla}_{d}\cdot \boldsymbol{u}^{n+1},\bar{p}^{n+1}\big)_{2}\notag\\
&-\cfrac{\delta t}{Re}\rVert \sqrt{\mu(c^{n})}\boldsymbol{\nabla}_{d}\boldsymbol{u}^{n+1}\rVert^{2}_{2}-\cfrac{\delta t}{3Re}\rVert\sqrt{\mu(c^{n})}\boldsymbol{\nabla}_{d}\cdot \boldsymbol{u}^{n+1}\rVert^{2}_{2}\notag\\
&+\cfrac{h^2\delta t}{M}\big(A_{x}(\rho^{n}_{ew}u^{n+1})D_x c^{n},\bar{\mu}^{n+1}_{c}\big)_{2}\notag\\
&+\cfrac{h^2\delta t}{M}\big(A_{y}(\rho^{n}_{ns}v^{n+1})D_y c^{n}, \bar{\mu}^{n+1}_{c}\big)_{2},\label{pm-num-mx-int}
\end{align}
where a special discretization for surface tension terms $A_{x}(\rho^{n}_{ew}{\bar{\mu}_{c}}^{n+1})D_x c^{n}$ and $A_{y}(\rho^{n}_{ns}{\bar{\mu}_{c}}^{n+1})D_y c^{n}$, and the corresponding derivations are introduced in $\ref{surface-ten}$. The various summation-by-parts equations we have used here are Eqs.(\ref{sum-bp-2ew-2c})-(\ref{sum-bp-3yc-3c}). Note that in all the derivations throughout this theorem, the boundary terms that originated from summation-by-parts can be eliminated by utilizing the homogeneous boundary conditions ({\ref{pm-dis-bc1}}) and ({\ref{pm-dis-bc2}}). We have also used the fully discrete mass conservation (\ref{pm-mass-con-dis}) and the following identity
\begin{align}
-\cfrac{h^2}{Fr}\big(A_{y}\rho^{n+1},v^{n+1}\big)_{2}=&-\cfrac{h^2}{Fr}\big(A_{x}\rho^{n+1}u^{n+1},0\big)_{2}-\cfrac{h^2}{Fr}\big(A_{y}\rho^{n+1}v^{n+1},D_{y}y\big)_{2}\notag\\
=&-\cfrac{h^2}{Fr}\big(A_{x}\rho^{n+1}u^{n+1},D_{x}y\big)_{2}-\cfrac{h^2}{Fr}\big(A_{y}\rho^{n+1}v^{n+1},D_{y}y\big)_{2}\notag\\
=&~~~~\cfrac{h^2}{Fr}\bigg(\big(d_{x}(A_{x}\rho^{n+1}u^{n+1})+d_{y}(A_{y}\rho^{n+1}v^{n+1})\big),y\bigg)_{2}\notag\\
=&-\cfrac{h^2}{Fr \delta t}\big((\rho^{n+1}-\rho^{n}) ,y\big)_{2}.\label{potential-discrete}
\end{align}
Multiplying Eq.(\ref{pm-num-mass}) by $\delta t  {\bar{p}}^{n+1}$ and using the summation-by-parts Eqs.(\ref{sum-bp-3xc-3c}) and (\ref{sum-bp-3yc-3c}), we obtain
\begin{align}
h^2\delta t\big(\boldsymbol{\nabla}_{d}\cdot  \boldsymbol{u}^{n+1},\bar{p}^{n+1}\big)_{2} =&-\cfrac{\alpha^2 \delta t}{Pe}\rVert \sqrt{ m(c^{n})} \boldsymbol{\nabla}_{D}\bar{p}\rVert_{2}^{2}\notag\\
&-\cfrac{h^2\alpha \delta t}{Pe} \big( m(c^{n})\boldsymbol{\nabla}_{D} {\bar{\mu}_{c}}^{n+1}, \boldsymbol{\nabla}_{D} {\bar{p}}^{n+1}\big).\label{pm-num-mass-int}
\end{align}
Multiplying Eq.(\ref{pm-num-phase}) by $\delta t{\bar{\mu}_{c}}^{n+1}$ and using the summation-by-parts Eqs.(\ref{sum-bp-3xc-3c}) and (\ref{sum-bp-3yc-3c}), we obtain
\begin{align}
&h^2(\rho^{n+1}(c^{n+1}-c^{n}), {\bar{\mu}_{c}}^{n+1})_{2}+h^2\delta t\big(a_{x} (\rho^{n}_{ew}D_{x}c^{n} u^{n+1})),{\bar{\mu}_{c}}^{n+1}\big)_{2}\notag\\
&+h^2\delta t\big(a_{y} (\rho^{n}_{ns} D_y c^{n}v^{n+1}), {\bar{\mu}_{c}}^{n+1}\big)_{2}\notag\\
&=-\cfrac{h^2\alpha \delta t}{Pe}\big( m(c^{n})\boldsymbol{\nabla}_{D} {\bar{p}}^{n+1}, \boldsymbol{\nabla}_{D} {\bar{\mu}_{c}}^{n+1}\big)-\cfrac{\delta t}{Pe}\rVert \sqrt{m(c^{n})} \boldsymbol{\nabla }_{D} {\bar{\mu}_{c}}^{n+1}\rVert^{2}_{2}.\label{pm-num-phase-int}
\end{align}
Again the special discretization for advection terms $a_{x} (\rho^{n}_{ew}D_{x}c^{n} u^{n+1}))$ and $a_{y} (\rho^{n}_{ns} D_y c^{n}v^{n+1})$, and the corresponding derivations are introduced in $\ref{mass-der-rho-dis}$.
Multiplying Eq.(\ref{pm-num-chem}) by $({c}^{n+1}-{c}^{n})/M$ and using the norm definition (\ref{norm-cgradc}), we obtain
\begin{align}
\cfrac{h^2}{M}\big(\rho^{n+1}(c^{n+1}-c^{n}),  {\bar{\mu}_{c}}^{n+1}\big)_{2}&=\cfrac{\eta h^2 }{\epsilon We}\big((\rho^{n+1} F(c^{n+1}), 1)_{2}-(\rho^{n} F(c^{n}), 1)_{2}\big)\notag\\
&+\cfrac{\epsilon\eta }{2 We}(\rVert \sqrt{\rho^{n+1}}\boldsymbol{\nabla}_{D} c^{n+1}\rVert_2^2 - \rVert \sqrt{\rho^{n}}\boldsymbol{\nabla}_{D} c^{n}\rVert_2^2).\label{pm-num-chem-int}
\end{align}
Summing up the four relations (\ref{pm-num-mx-int}), (\ref{pm-num-mass-int})-(\ref{pm-num-chem-int}), we obtain the energy stability (\ref{asdfas-asdfsa-1}) of the primitive method at the fully discrete level.  Here all the norms are defined by Eqs.(\ref{norm-rho-vel})-(\ref{norm-grad-dot-vel}).
\end{proof}

\subsection{Fully Discrete Projection Method}
The fully-discrete scheme for projection method (\ref{pj-mom})-(\ref{pj-chem}) is the following:
given $c^{n}$, $\bar{\mu}^{n}_c$, $\bar{p}^{n} \in \mathcal {C}_{\overline{m}_{1}\times \overline{m}_{2}}$, $u^{n}\in  \mathcal {E}^{ew}_{m_{1}\times m_{2}}$, $v^{n}\in \mathcal {E}^{ns}_{m_{1}\times m_{2}}$, and $\boldsymbol{u}^{n}=(u^{n},v^{n})$ at time $t=n\delta t$, find grid functions $c^{n+1},~\bar{\mu}^{n+1}_c,~\bar{p}^{n+1}\in  \mathcal {C}_{\overline{m}_{1}\times \overline{m}_{2}}$, $u^{n+1}$, $\tilde{u}^{n+1}\in  \mathcal {E}^{ew}_{m_{1}\times m_{2}}$, $v^{n+1}$, $\tilde{v}^{n+1}\in \mathcal {E}^{ns}_{m_{1}\times m_{2}}$, and $\boldsymbol{u}^{n+1}=(u^{n+1},v^{n+1})$ at time $t=(n+1)\delta t$:
\begin{align}
&\boldsymbol{A}\rho^{n} \cfrac{\tilde{\boldsymbol{u}}^{n+1}-\boldsymbol{u}^{n}}{\delta t}+\rho^{n} \boldsymbol{u}^{n}\boldsymbol{\nabla}\tilde{\boldsymbol{u}}^{n+1}+\cfrac{1}{2}\tilde{\boldsymbol{u}}^{n+1}\big(\cfrac{\boldsymbol{A}\rho^{n+1}-\boldsymbol{A}\rho^{n}}{\delta t}+\boldsymbol{\nabla}\cdot (\rho^{n}\boldsymbol{u}^{n})\big)\notag\\
&=\cfrac{1}{Re}\boldsymbol{\nabla}_{\hspace{-1mm}(D,\mathfrak{D})}\cdot (\mu(c^{n}) \boldsymbol{\nabla}_{\hspace{-1mm}(d,\mathcal{D})} \tilde{\boldsymbol{u}}^{n+1})+\cfrac{1}{3Re}\boldsymbol{\nabla}_{\hspace{-1mm}D}\big(\mu(c^{n})\boldsymbol{\nabla}_{\hspace{-1mm}d}\cdot  \tilde{\boldsymbol{u}}^{n+1} \big)-\cfrac{1}{Fr}\boldsymbol{A}\rho^{n+1} g \boldsymbol{j},\label{pj-num-mx}\\
&\boldsymbol{A}\rho^{n+1}\cfrac{\boldsymbol{u}^{n+1}-\tilde{\boldsymbol{u}}^{n+1}}{\delta t}=-\cfrac{1}{M}\boldsymbol{\nabla}_{D}\bar{p}^{n+1} +\cfrac{1}{M}\rho^{n+1}{\bar{\mu}_{c}}^{n+1}\boldsymbol{\nabla} c^{n+1},\label{pj-num-pjx}\\
&\boldsymbol{\nabla}_{\hspace{-1mm}d}\cdot \boldsymbol{u}^{n+1}=\cfrac{\alpha}{Pe} \boldsymbol{\nabla}_{\hspace{-1mm}d} \cdot   \big(\boldsymbol{ A}m(c^{n}) \boldsymbol{\nabla}_{\hspace{-1mm}D} {\bar{\mu}_{c}}^{n+1}\big)+\cfrac{\alpha^2}{Pe} \boldsymbol{\nabla}_{\hspace{-1mm}d} \cdot   \big(\boldsymbol{ A}m(c^{n}) \boldsymbol{\nabla}_{\hspace{-1mm}D} {\bar{p}}^{n+1}\big),\label{pj-num-mass}\\
&\rho^{n}\cfrac{c^{n+1}-c^{n}}{\delta t}+\rho^{n+1}\boldsymbol{u}^{n+1}\cdot \boldsymbol{\nabla}c^{n+1}=\cfrac{1}{Pe} \boldsymbol{\nabla}_{\hspace{-1mm}d} \cdot   \big(\boldsymbol{ A}m(c^{n}) \boldsymbol{\nabla}_{\hspace{-1mm}D} {\bar{\mu}_{c}}^{n+1}\big)\notag\\
&+\cfrac{\alpha}{Pe} \boldsymbol{\nabla}_{\hspace{-1mm}d} \cdot   \big(\boldsymbol{ A}m(c^{n}) \boldsymbol{\nabla}_{\hspace{-1mm}D} {\bar{p}}^{n+1}\big),\label{pj-num-phase}\\
&\rho^{n}{\bar{\mu}_{c}}^{n+1}=\cfrac{M\eta}{\epsilon We}\rho^{n+\frac{1}{2}}g(c^{n+1},c^{n})+\cfrac{M\eta}{\epsilon We} F^{n+\frac{1}{2}}(c)~r(c^{n+1},c^{n})\notag\\
&+\cfrac{\epsilon\eta M}{2 We} \lVert\boldsymbol{\nabla }_{D}c\rVert_{2}^{n+\frac{1}{2}} r(c^{n+1},c^{n})-\cfrac{\epsilon\eta M}{ We}\boldsymbol{\nabla}_{\hspace{-1mm}d}\cdot  (\boldsymbol{A}\rho^{n+\frac{1}{2}}\boldsymbol{\nabla}_{\hspace{-1mm}D}~c^{n+\frac{1}{2}}).\label{pj-num-chem}
\end{align}
Note that the terms $\rho^{n}\boldsymbol{u}^{n}\cdot \boldsymbol{\nabla}\tilde{\boldsymbol{u}}^{n+1}$ and $\boldsymbol{\nabla}\cdot (\rho^{n} \boldsymbol{u}^{n})\boldsymbol{u}^{n+1}/2$ in Eq.(\ref{pj-num-mx}), the term $\rho^{n+1} {\bar{\mu}_{c}}^{n+1}\boldsymbol{\nabla}c^{n+1}$ in Eq.(\ref{pj-num-pjx}), and the term $\rho^{n+1}\boldsymbol{u}^{n+1}\cdot \boldsymbol{\nabla}c^{n+1}$ in Eq.(\ref{pj-num-phase}) are defined analogously as in Eqs.(\ref{e1e1-1})-(\ref{e1e2}), where the main difference is that in the current method, the above three terms have different upper subscript representing the solution at the different time step. Again, $\lVert\boldsymbol{\nabla }_{D}c\rVert_{2}^{n+\frac{1}{2}}=\lVert\boldsymbol{\nabla }_{D}c^{n+1}\rVert_{2}+\lVert\boldsymbol{\nabla }_{D}c^{n}\rVert_{2}$ is the temperal average. Once again, the reader is referred to the Appendix and \cite{SteveMultigrid-2009,SteveMultigrid-2010,SteveMultigrid2013} for a description of the finite difference notation used here. We assume the cell-centered functions satisfy the following Neumann boundary conditions
\begin{align}
\boldsymbol{n}\cdot \boldsymbol{\nabla}_{D}c^{n+1}\rvert_{\partial \Omega}=\boldsymbol{n}\cdot \boldsymbol{\nabla}_{D}\mu_{c}^{n+1}\rvert_{\partial \Omega}=0,\label{pj-dis-bc1}
\end{align}
and the intermediate velocity $\tilde{\boldsymbol{u}}^{n+1}=(\tilde{u}^{n+1},\tilde{v}^{n+1})$ satisfies the no-slip boundary condition
\begin{align}
\tilde{u}^{n+1}\rvert_{\partial\Omega}=\tilde{v}^{n+1}\rvert_{\partial\Omega}=0,\label{pj-dis-bc2}
\end{align}
and the velocity $\boldsymbol{u}^{n+1}=(u^{n+1},v^{n+1})$ satisfies the following boundary condition 
\begin{align}
\boldsymbol{n}\cdot \boldsymbol{u}^{n+1}|_{\partial\Omega}=0.\label{pj-dis-bc3}
\end{align}
A detailed description for the boundary condition is provided in $\ref{app-bc}$.
\begin{thm}
The scheme (\ref{pj-num-mx})-(\ref{pj-num-chem}) is mass conservative for the two-phase fluid, ${\rm i.e}$, 
\begin{align}
\big(\rho^{n+1},  1\big)&=\big(\rho^{n},  1\big),~~\hspace{0.5mm}~~\forall n\ge 0.
\end{align}
\end{thm}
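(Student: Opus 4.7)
The plan is to mirror the structure of the primitive-method proof (Theorem~\ref{them-mass-conservation}), making the appropriate substitutions for the projection method's time-indexing conventions. The essential structural difference is that the projection-method phase equation (\ref{pj-num-phase}) carries $\rho^{n}$ in front of the time derivative and discretizes the advective flux as $\rho^{n+1}\boldsymbol{u}^{n+1}\cdot\boldsymbol{\nabla} c^{n+1}$, whereas the primitive scheme used $\rho^{n+1}$ and $\rho^{n}\boldsymbol{u}^{n+1}\cdot\boldsymbol{\nabla} c^{n}$. Consequently the density-continuity identity that emerges will involve fluxes of the form $A_{x}\rho^{n+1}u^{n+1}$ and $A_{y}\rho^{n+1}v^{n+1}$, consistent with the semi-discrete relation (\ref{part-phvar-2a1}) that the projection scheme mimics.

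First I would multiply (\ref{pj-num-mass}) by $1/\alpha$ and subtract from (\ref{pj-num-phase}) to eliminate the Cahn-Hilliard right-hand side, obtaining
\begin{equation*}
\rho^{n}\,\frac{c^{n+1}-c^{n}}{\delta t}
+ a_{x}(\rho^{n+1}_{ew}\,D_{x}c^{n+1}\,u^{n+1})
+ a_{y}(\rho^{n+1}_{ns}\,D_{y}c^{n+1}\,v^{n+1})
= \frac{1}{\alpha}\,\boldsymbol{\nabla}_{d}\cdot\boldsymbol{u}^{n+1}.
\end{equation*}
Next I would multiply through by $-\delta t\,\alpha\rho^{n+1}$ and invoke the algebraic identity (\ref{def-r}), i.e.\ $r(c^{n+1},c^{n})(c^{n+1}-c^{n}) = \rho^{n+1}-\rho^{n}$ together with $r=-\alpha\rho^{n+1}\rho^{n}$, to convert the time-difference term into $\rho^{n+1}-\rho^{n}$. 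The result is a pointwise fully-discrete mass-balance equation whose form parallels (\ref{pm-mass-con-dis0}) but with every $\rho^{n}$ weight outside the finite-difference operators replaced by $\rho^{n+1}$.

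Then I would take the discrete inner product with $1$, applying the analogue of the identity (\ref{afdsf-mass}) that is constructed in \ref{mass-der-rho-dis}. The key observation is that that identity is derived solely from the definitions of $\rho_{ew}$, $\rho_{ns}$ and the relation $\partial\rho/\partial c = -\alpha\rho^{2}$ together with the discrete product rule $a_{x}(u\,D_{x}\rho) + \rho\,d_{x}u = d_{x}(A_{x}\rho\cdot u)$ (and its $y$-analogue); nothing in the argument depends on the time level. Shifting all indices from $n$ to $n+1$ therefore yields
\begin{equation*}
\delta t\bigl(a_{x}(\rho^{n+1}_{ew}D_{x}c^{n+1}u^{n+1})+a_{y}(\rho^{n+1}_{ns}D_{y}c^{n+1}v^{n+1}),\,-\alpha\rho^{n+1}\bigr)_{2}
=\delta t\bigl(a_{x}(u^{n+1}D_{x}\rho^{n+1})+a_{y}(v^{n+1}D_{y}\rho^{n+1}),1\bigr)_{2}.
\end{equation*}
Combining this with the $\rho^{n+1}\boldsymbol{\nabla}_{d}\cdot\boldsymbol{u}^{n+1}$ contribution from the right-hand side and using the discrete product rule collapses the sum to $\delta t\bigl(d_{x}(A_{x}\rho^{n+1}u^{n+1})+d_{y}(A_{y}\rho^{n+1}v^{n+1}),1\bigr)_{2}$.

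Finally, summation-by-parts plus the no-penetration boundary condition (\ref{pj-dis-bc3}) makes this flux-divergence term vanish, yielding $\bigl(\rho^{n+1}-\rho^{n},1\bigr)_{2}=0$. I anticipate the only non-routine step to be verifying that the analog of (\ref{afdsf-mass}) truly only requires structural properties that are preserved under the time-index shift; once that is confirmed, the rest is bookkeeping. It is also worth noting that the weaker no-penetration boundary condition on $\boldsymbol{u}^{n+1}$ (rather than the full no-slip imposed on $\tilde{\boldsymbol{u}}^{n+1}$) is exactly enough to annihilate the boundary flux, so no additional hypothesis is needed.
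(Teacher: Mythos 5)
Your proposal follows essentially the same route as the paper's proof: eliminate the Cahn--Hilliard right-hand side by combining (\ref{pj-num-phase}) with $1/\alpha$ times (\ref{pj-num-mass}), convert the result into a discrete continuity equation via the identity (\ref{def-r}) and the appendix computation behind (\ref{afdsf-mass}) with indices shifted to level $n+1$, and annihilate the resulting flux divergence by summation-by-parts using the boundary condition (\ref{pj-dis-bc3}). The only difference is that you multiply by $-\alpha\rho^{n+1}$ where the paper's text says $-\alpha\rho^{n}$; your choice is the one actually consistent with the time-indexing of (\ref{pj-num-phase}) and with the paper's displayed result (\ref{pj-dis-mass-con}), so this amounts to correcting an apparent typo rather than a genuine deviation.
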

\begin{proof}
Multiplying Eq.(\ref{pj-num-mass}) by $1/\alpha$ and substituting into (\ref{pj-num-phase}), we obtain
\begin{align}
&\rho^{n}\cfrac{c^{n+1}-c^{n}}{\delta t}+a_{x} (\rho^{n+1}_{ew}D_{x}c^{n+1} u^{n+1})+a_{y} (\rho^{n+1}_{ns}D_{y}c^{n+1} v^{n+1})\notag\\
&= \cfrac{1}{\alpha}d_{x} u^{n+1}+\cfrac{1}{\alpha}d_{y} v^{n+1}.
\end{align}
Again the definition of $\rho_{ew}$, $\rho_{ns}$ and a detailed derivation of (\ref{afdsf-mass}) are given in $\ref{mass-der-rho-dis}$. Multiplying the above equation by $-\alpha \rho^{n}$ and using the same treatment in Eqs.(\ref{afdsf-mass}) and (\ref{pm-mass-con-dis}), we obtain the mass conservation of the binary fluids from our projection method:
\begin{align}
\cfrac{1}{\delta t}\big((\rho^{n+1}-\rho^{n}),1\big)_{2}=&\big(d_{x}(A_{x}\rho^{n+1}u^{n+1})_{2}+d_{y}(A_{y}\rho^{n+1}v^{n+1}) ,1\big)_{2}.\label{pj-dis-mass-con}
\end{align}
Using summation by parts, we obtain
\begin{align}
\big((\rho^{n+1}-\rho^{n}), 1\big)_{2}&=0,
\end{align}
where we have shown that our projection method preserves the mass of the binary fluid at the fully discrete level. \end{proof}
\begin{thm}\label{pj-en-stable-theo}
The fully discrete projection scheme (\ref{pj-num-mx})-(\ref{pj-num-chem}) is energy stable, ${\rm i.e}$, 
\begin{align}
&E^{n+1}_{h}-E^{n}_{h}=\notag\\
&\bigg(\cfrac{1}{2}|| \sqrt{\rho^{n+1}}\boldsymbol{ u}^{n+1}||_{2}^2+\cfrac{\epsilon\eta }{2 We}\rVert \sqrt{\rho^{n+1}}\boldsymbol{\nabla}_{D} c^{n+1}\rVert_2^2+\cfrac{\eta h^2}{\epsilon We}\big(\rho^{n+1} F(c^{n+1}), 1\big)_{2}+\cfrac{h^2}{Fr}\big(\rho^{n+1} y , 1\big)_{2}\bigg)\notag\\
&-\bigg(\cfrac{1}{2}|| \sqrt{\rho^{n}} \boldsymbol{u}^{n}||_{2}^2+\cfrac{\epsilon\eta }{2 We} \rVert \sqrt{\rho^{n}}\boldsymbol{\nabla}_{D} c^{n}\rVert_2^2)+\cfrac{\eta h^2}{\epsilon We}\big(\rho^{n} F(c^{n}),  1\big)_{2}+\cfrac{h^2}{Fr}\big(\rho^{n} y , 1\big)_{2}\bigg)\notag\\
&=-\cfrac{\delta t}{Re}||\sqrt{\mu(c^{n})}\boldsymbol{\nabla}_{d} \tilde{\boldsymbol{u}}^{n+1}||^2_2-\cfrac{\delta t}{3Re}||\sqrt{\mu(c^{n})}\boldsymbol{\nabla}_{d}\cdot\tilde{ \boldsymbol{u}}^{n+1}||^2_2-\cfrac{\delta t}{Pe}||\sqrt{ m(c^{n})}\boldsymbol{\nabla}_{D} {\mu_{c}}^{n+1}||_{2}^2\notag\\
&-\cfrac{1}{2}|| \sqrt{\rho^{n}} (\tilde{\boldsymbol{u}}^{n+1}-\boldsymbol{u}^{n})||_{2}^2-\cfrac{1}{2}|| \sqrt{\rho^{n+1}} (\boldsymbol{u}^{n+1}-\tilde{\boldsymbol{u}}^{n+1})||_{2}^2\le 0.\label{pj-sdafa}
\end{align}
where $E^{n+1}_{h}$ is the total energy of the system at the fully discrete level.
\end{thm}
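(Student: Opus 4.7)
The plan is to adapt the proof of Theorem \ref{pm-en-stable-theo} to the two-stage velocity update, following the recipe outlined in the remark after Theorem \ref{energy-pj}. I would test the five equations (\ref{pj-num-mx})--(\ref{pj-num-chem}) respectively against $\delta t\,\tilde{\boldsymbol{u}}^{n+1}$, $\delta t\,\boldsymbol{u}^{n+1}$, $\delta t\,\bar{p}^{n+1}/M$, $\delta t\,\bar{\mu}_c^{n+1}/M$, and $(c^{n+1}-c^n)/M$ in the corresponding discrete inner products, apply the summation-by-parts identities (\ref{sum-bp-2ew-2c})--(\ref{sum-bp-3yc-3c}) under the homogeneous boundary conditions (\ref{pj-dis-bc1})--(\ref{pj-dis-bc3}), and sum the five resulting relations.

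For the predictor step (\ref{pj-num-mx}), the structural term $\tfrac{1}{2}\tilde{\boldsymbol{u}}^{n+1}\left((\boldsymbol{A}\rho^{n+1}-\boldsymbol{A}\rho^n)/\delta t + \boldsymbol{\nabla}\cdot(\rho^n\boldsymbol{u}^n)\right)$ is designed precisely so that the discrete convective contribution vanishes after testing by $\tilde{\boldsymbol{u}}^{n+1}$, while the polarization identity $a(a-b)=\tfrac{1}{2}(a^2-b^2)+\tfrac{1}{2}(a-b)^2$ upgrades the kinetic-energy weight from $\rho^n$ to $\rho^{n+1}$ and produces
\begin{equation*}
\tfrac{1}{2}\|\sqrt{\rho^{n+1}}\tilde{\boldsymbol{u}}^{n+1}\|_2^2 - \tfrac{1}{2}\|\sqrt{\rho^n}\boldsymbol{u}^n\|_2^2 + \tfrac{1}{2}\|\sqrt{\rho^n}(\tilde{\boldsymbol{u}}^{n+1}-\boldsymbol{u}^n)\|_2^2,
\end{equation*}
together with the two viscous dissipations and a gravity-work term. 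Testing the projection step (\ref{pj-num-pjx}) by $\boldsymbol{u}^{n+1}$ and polarizing in the $\rho^{n+1}$ weight yields
\begin{equation*}
\tfrac{1}{2}\|\sqrt{\rho^{n+1}}\boldsymbol{u}^{n+1}\|_2^2 - \tfrac{1}{2}\|\sqrt{\rho^{n+1}}\tilde{\boldsymbol{u}}^{n+1}\|_2^2 + \tfrac{1}{2}\|\sqrt{\rho^{n+1}}(\boldsymbol{u}^{n+1}-\tilde{\boldsymbol{u}}^{n+1})\|_2^2,
\end{equation*}
plus the pressure work $h^2\delta t/M\,(\boldsymbol{\nabla}_d\cdot\boldsymbol{u}^{n+1},\bar{p}^{n+1})_2$ and a surface-tension work term made symmetric by the discretization of \ref{surface-ten}. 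The two kinetic-energy increments telescope to $\tfrac{1}{2}\|\sqrt{\rho^{n+1}}\boldsymbol{u}^{n+1}\|_2^2-\tfrac{1}{2}\|\sqrt{\rho^n}\boldsymbol{u}^n\|_2^2$ plus the two numerical-dissipation terms on the right of (\ref{pj-sdafa}).

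The remaining three tests mirror the primitive case. Testing (\ref{pj-num-mass}) by $\delta t\bar{p}^{n+1}/M$ cancels the pressure work and produces the quasi-incompressible dissipation $(\alpha^2\delta t/(MPe))\|\sqrt{m(c^n)}\boldsymbol{\nabla}_D\bar{p}^{n+1}\|_2^2$ plus a mixed $\bar{p}$--$\bar{\mu}_c$ term; testing (\ref{pj-num-phase}) by $\delta t\bar{\mu}_c^{n+1}/M$ cancels the surface-tension work (through the special advective discretization of \ref{mass-der-rho-dis}) as well as the mixed term, and produces $\delta t/(MPe)\|\sqrt{m(c^n)}\boldsymbol{\nabla}_D\bar{\mu}_c^{n+1}\|_2^2$; and testing (\ref{pj-num-chem}) by $(c^{n+1}-c^n)/M$ invokes the algebraic identities built into $g$ and, via (\ref{def-r}), into $r$, together with the norm identity (\ref{norm-cgradc}), to produce the telescoping interfacial energy $\epsilon\eta/(2We)\,\|\sqrt{\rho}\boldsymbol{\nabla}_D c\|_2^2$ and bulk double-well energy $\eta h^2/(\epsilon We)\,(\rho F(c),1)_2$. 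Using the relation $\bar{\mu}_c=\mu_c-\alpha\bar{p}$ consolidates the separate $\bar{p}$- and $\bar{\mu}_c$-gradient dissipations into the single $\mu_c$-gradient dissipation in (\ref{pj-sdafa}).

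The main obstacle I anticipate, and the point requiring the most care, is the handling of the gravity term in (\ref{pj-num-mx}), which is weighted by $\rho^{n+1}$ yet tested by the non-mass-conserving intermediate velocity $\tilde{\boldsymbol{u}}^{n+1}$ rather than by $\boldsymbol{u}^{n+1}$. To recover the clean telescoping increment $h^2/Fr\,((\rho^{n+1}-\rho^n)y,1)_2$ I would substitute the projection relation (\ref{pj-num-pjx}) to replace $\boldsymbol{A}\rho^{n+1}\tilde{\boldsymbol{u}}^{n+1}$ by $\boldsymbol{A}\rho^{n+1}\boldsymbol{u}^{n+1}$ plus $O(\delta t)$ correction terms, then apply the discrete analogue of (\ref{potential-discrete}) together with the fully discrete continuity (\ref{pj-dis-mass-con}) to telescope the leading piece. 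The correction pieces involving $\boldsymbol{\nabla}_D\bar{p}^{n+1}$ and $\rho^{n+1}\bar{\mu}_c^{n+1}\boldsymbol{\nabla} c^{n+1}$ must be shown to cancel against the analogous contributions appearing in the tests of the mass and phase equations, leaving only the non-positive dissipative terms. Once this bookkeeping is complete, summing the five tested relations yields (\ref{pj-sdafa}) exactly.
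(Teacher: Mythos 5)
Your overall recipe is exactly the paper's: Theorem \ref{pj-en-stable-theo} is not proved in full in the text, but the remark following it prescribes precisely the five test functions you list ($\delta t\,\tilde{\boldsymbol{u}}^{n+1}$, $\delta t\,\boldsymbol{u}^{n+1}$, $\delta t\,\bar{p}^{n+1}/M$, $\delta t\,\bar{\mu}_{c}^{n+1}/M$, $(c^{n+1}-c^{n})/M$), followed by summation-by-parts and summation of the five relations, mirroring the written-out proof of Theorem \ref{pm-en-stable-theo}. Your account of the two polarization identities, the telescoping of the kinetic energy through the intermediate velocity, the cancellation of the pressure work, the surface-tension work (via the discretization of \ref{surface-ten} and \ref{mass-der-rho-dis}), the mixed $\bar{p}$--$\bar{\mu}_{c}$ terms, and the role of $g$, $r$ and (\ref{norm-cgradc}) is all consistent with that template.

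The one point where you go beyond the paper's sketch --- the gravity term --- is where your argument does not close as described. Replacing $\tilde{\boldsymbol{u}}^{n+1}$ by $\boldsymbol{u}^{n+1}$ via (\ref{pj-num-pjx}) leaves the correction
\[
\frac{\delta t\,h^{2}}{Fr}\big[A_{y}\rho^{n+1},\,v^{n+1}-\tilde{v}^{n+1}\big]_{ns}
=\frac{\delta t^{2}h^{2}}{M\,Fr}\Big[\,1,\;-D_{y}\bar{p}^{n+1}+A_{y}(\rho^{n+1}_{ns}\bar{\mu}_{c}^{n+1})D_{y}c^{n+1}\Big]_{ns},
\]
an $O(\delta t^{2})$ quantity paired with the constant function. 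The tests of (\ref{pj-num-mass}) and (\ref{pj-num-phase}) only generate terms paired with $\boldsymbol{\nabla}_{D}\bar{p}^{n+1}$ and $\boldsymbol{\nabla}_{D}\bar{\mu}_{c}^{n+1}$ against $m(c^{n})$ weights, so there is nothing for this residual to cancel against, and it does not vanish under the stated boundary conditions (e.g., $(a_{y}(D_{y}\bar{p}^{n+1}),1)_{2}$ telescopes to interior differences of $\bar{p}$, not to zero). The clean resolution, consistent with the semi-discrete scheme where gravity sits in the projection step (\ref{pj-pj}), is to test the gravity term with $\boldsymbol{u}^{n+1}$ rather than $\tilde{\boldsymbol{u}}^{n+1}$: then $-\frac{h^{2}\delta t}{Fr}\big[A_{y}\rho^{n+1},v^{n+1}\big]_{ns}$ telescopes to $-\frac{h^{2}}{Fr}\big((\rho^{n+1}-\rho^{n})y,1\big)_{2}$ exactly as in (\ref{potential-discrete}), using the projection method's discrete continuity equation (\ref{pj-dis-mass-con}), whose flux is $\rho^{n+1}\boldsymbol{u}^{n+1}$. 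The placement of $-\frac{1}{Fr}\boldsymbol{A}\rho^{n+1}g\boldsymbol{j}$ in the predictor (\ref{pj-num-mx}) is inconsistent with (\ref{pj-pj}) and is best read as a transcription slip; with that reading no substitution is needed and the rest of your bookkeeping goes through.
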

\begin{rmk}
Here we omit the details of the proof, as the derivations here are similar with the proof for the primitive method in many aspects. The primary differences is that in the projection method has one more projection equation (\ref{pj-num-pjx}), and the test functions are different. In particular, to show the energy stability of the projection method, we multiply Eq.(\ref{pj-num-mx}) by $\delta t\tilde{\boldsymbol{u}}^{n+1}$, Eq.(\ref{pj-num-pjx}) by $\delta t\boldsymbol{u}^{n+1}$, Eq.(\ref{pj-num-mass}) by $\delta t\bar{p}^{n+1}/ M$, Eq.(\ref{pj-num-phase}) by ${\delta t\bar{\mu}_{c}}^{n+1}/M$ and Eq.(\ref{pj-num-chem}) by $(c^{n+1}-c^{n})/M$. After using the summation-by-parts with the homogeneous boundary conditions, we sum up the resulted relations to obtain the energy stability for the projection method (\ref{pj-sdafa}). 
\end{rmk}
\section{Multigrid Solver}
In this paper, we present an efficient nonlinear FAS multigrid solver for our schemes. The solver is motivated by that described for the Cahn-Hilliard-Brinkman scheme in an existing paper \cite{SteveMultigrid2013}, where a finite difference method in primitive variable formulations is used. The primary difference is that in the present paper, the multigrid solver for our primitive method is designed for a much more complicated and highly non-linear problem comprised of a full Navier-Stokes equation and Cahn-Hilliard equation with variable density. 
\subsection{Primitive Methods}
In the multigrid solver for our the primitive method (\ref{pm-num-mx})-(\ref{pm-num-chem}), the smoothing operators for the Cahn-Hilliard equation and Navier-Stoke equation are decoupled. Specifically, for each grid cell $(i, j)$, we perform the following steps:\\
\begin{enumerate}[Step 1.]
\item Update $c^{k+1}_{i,j}$, $\bar{\mu}^{k+1}_{c~i,j}$ using a non-linear Gauss-Seidel method on the CH equations (\ref{pm-num-phase}) and (\ref{pm-num-chem}).
\item Update the five variables $u^{k+1}_{i\pm\frac{1}{2},j}$, $v^{k+1}_{i,j\pm\frac{1}{2}}$, and $\bar{p}^{k+1}_{i,j}$ using a Vanka-type smoothing strategy \cite{Oosterlee2008, Multigrid, Vanka1986, Wesseling2004} on the NS equations (\ref{pm-num-mx})-(\ref{pm-num-mass}) with the updated values for $c^{k+1}_{i,j}$, $\bar{\mu}^{k+1}_{c~i,j}$.
\end{enumerate}
Here $k$ stands for the iteration step at the current time step. Note that $u$ and $v$ are edge-centered variables and this contributes to the complication of the method. Here we omit the details for the relaxation, and we refer to the \cite{SteveMultigrid-2010} as a description for the Vanka-type smoother for the fluid equation in primitive variable formulation. Note that the smoother operator for CH equation and the Vanka-type smoother for NS equation is performed in the RedBlack order.
\subsection{Projection Methods}
For the projection method (\ref{pj-num-mx})-(\ref{pj-num-chem}), we first perform a relaxation on the Cahn-Hilliard equations (\ref{pj-num-phase}) and (\ref{pj-num-chem}), which is the same as the primitive method; and then relax the flow equation (\ref{pj-num-mx}) to obtain the intermediate velocity by using a Vanka-type smoother. Note that the smoother used here differs from that of the primitive method in that in the present case the pressure is not updated together with the four intermediate velocity variables. We next relax the mass conservation equation (\ref{pj-num-mass}) to obtain the pressure, and finally we update the velocity through the projection equation (\ref{pj-num-pjx}). \\
Specifically, in the proposed smoother, for each grid cell $(i, j)$, we perform the following steps:
\begin{enumerate}[Step 1.]
\item Update $c^{k+1}_{i,j}$, $\bar{\mu}^{k+1}_{c~i,j}$ by using a nonlinear Gauss-Seidel method on CH equations (\ref{pj-num-phase}) and (\ref{pj-num-chem}).
\item Update the four intermediate velocity variables $\tilde{u}^{k+1}_{i\pm\frac{1}{2},j}$, $\tilde{v}^{k+1}_{i,j\pm\frac{1}{2}}$ using a Vanka-type smoothing strategy on the fluid equation (\ref{pj-num-mx}), with the updated $c^{k+1}_{i,j}$, $\bar{\mu}^{k+1}_{c~i,j}$.
\item Update the pressure $\bar{p}^{k+1}_{i,j}$ by using a nonlinear Gauss-Seidel method on the the mass conservation equation (\ref{pj-num-mass}) with the updated $c^{k+1}_{i,j}$, $\bar{\mu}^{k+1}_{c~i,j}$ and $\tilde{u}^{k+1}_{i\pm\frac{1}{2},j}$, $\tilde{v}^{k+1}_{i,j\pm\frac{1}{2}}$.
\item Update the four velocity variables $u^{k+1}_{i\pm\frac{1}{2},j}$ and $v^{k+1}_{i,j\pm\frac{1}{2}}$ through the projection equation (\ref{pj-num-pjx}) with the updated $c^{k+1}_{i,j}$, $\bar{\mu}^{k+1}_{c~i,j}$, $\tilde{u}^{k+1}_{i\pm\frac{1}{2},j}$, $\tilde{v}^{k+1}_{i,j\pm\frac{1}{2}}$ and $\bar{p}^{k+1}_{i,j}$.
\end{enumerate}
Here we omit the full details of the implementation and smoothing strategy and refer the reader to \cite{SteveMultigrid2013} for the remaining details of the solver. Moreover, for both methods, we use a standard FAS V-cycle approach that can be found in \cite{SteveMultigrid2013}. 
\section{Numerical example}\label{sim-sec}
In this section we investigate the performance of our numerical schemes by solving several test problems. For the advantage of using the finite difference method, we will focus only on rectangular domains. Due to the page limit, only two figures are shown in the \ref{app-B} to illustrate the mass conservation of our methods for Case 1 in Example 1.
\subsection{Capillary Wave}\label{sec-CW}
The first test is the damping of a sinusoidal, capillary wave, which takes into account the surface tension, gravity, and two phase flows with variable density. In \cite{Prosperetti1981}, an analytical solution was found in the case of the small-amplitude waves on an interface between incompressible viscous fluids in an infinite domain. To simulate this problem, we choose a computational domain, $\Omega = \{(x,y) : 0\leqslant x\leqslant 1, 0\leqslant y\leqslant 1\}$. We assume that the equilibrium position of the interface coincides with $x$ axis, and the capillary wave-length equals to the length of the domain in $x$-direction. We further assume zero initial velocity, and that the initial profile of the interface given by
\begin{align}
c(y,0) = \cfrac{1}{2}\bigg(1-{\rm tanh}\big(\cfrac{y-\tilde{y}}{2\sqrt{2 \epsilon}}  \big)\bigg)
\end{align}
with the perturbation $\tilde{y}(x) =0.5- H_{0}~{\rm{cos}} kx$, $k={2 \pi}/{\lambda_{w}}= 2 \pi$ and the initial amplitude of the perturbation wave $H_{0} = 0.01$. We set the gravity $g=1$ and surface tension $\sigma=1$. Moreover, the ratio parameter $\eta$ that relates the sharp interface model and phase field model is determined through the following equation \cite{Guo2014JFM}:
\begin{align}
\eta=\cfrac{(\rho_{2}-\rho_{1})^3}{2\sqrt{2}\rho_{1}\rho_{2}(\rho_{2}^2-\rho_{1}^{2}-2\rho_{1}\rho_{2}{\rm ln}\frac{\rho_{2}}{\rho_{1}})}.
\end{align}
To test our schemes, two cases with different density and viscosity ratios are considered. In Case 1 and 2, we choose the following values for kinematic viscosities and densities for the two fluids respectively:
\begin{align}
\nu&=\cfrac{\mu_{1}}{\rho_{1}}=\cfrac{\mu_{2}}{\rho_{2}}=0.01,~\rho_{1}=1,~\rho_{2}=10,\notag\\
\nu&=\cfrac{\mu_{1}}{\rho_{1}}=\cfrac{\mu_{2}}{\rho_{2}}=0.01,~\rho_{1}=1,~\rho_{2}=1000.
\end{align}
The other non-dimensional parameters are set as
\begin{align}
Re=100,~We=1,~Fr=1,~M=\epsilon,~Pe=1/\epsilon,
\end{align}
which is corresponding to the asymptotic analysis of the q-NSCH model \cite{Lowengrub1998}. Periodic conditions are imposed on the left and right boundary for the velocity $\boldsymbol{u}$, phase-field function $c$ and chemical potential $\mu_{c}$. At the upper and lower boundaries, we impose the no-slip boundary condition for the velocity, and no-flux boundary conditions for the phase-field functions $c$, $\mu_{c}$. The time step is set as $\delta t = 10^{-3}$. For each case, we use two values of $\epsilon = 0.005,~{\rm and}~0.0025$ with the corresponding grid size $[256 \times 256]$ and $[512 \times 512]$ respectively. Due to the sharp interface analysis \cite{Guo2014JFM}, the numerical results of this phase-field model approaches to that of the sharp interface model as the value of $\epsilon$ decreases. Both schemes are computed and the numerical results are compared with the analytical solution. Figure \ref{CAP-Den1to10-intf} shows the capillary wave amplitude for Case 1 with density ratio $1:10$, where for both schemes, the numerical results all agree well with the analytical solution. Moreover, as $\epsilon$ decreases, the numerical results converge to the analytical solutions. In \ref{app-B}, Figures \ref{CAP-Den1to10-SMass} and \ref{CAP-Den1to10-TMass} show the time evolution of the mass of single component $\rho c$ and binary fluids $\rho$, where it can be observed that both methods preserve the mass well. In particular, the mass can be conserved up to $10^{-10}$ with the primitive method, which performs slightly better than the projection method that preserves the mass up to $10^{-9}$. The similar results can be observed in Figure \ref{CAP-Den1to1000-intf} for the Case 2 with density ratio $1:1000$. Moreover, it has been confirmed that the mass of the single component and the binary fluids are preserved up to $10^{-9}$ by primitive method and up to $10^{-8}$ by the projection method for Case 2. In Figure \ref{CAP-Den1to10-En}, we show the energy dissipation for both cases with both methods. As predicted by the Theorem \ref{pm-en-stable-theo} and \ref{pj-en-stable-theo}, the energy decreases for both methods, exhibiting a similar dissipation way.
\subsection{Rising Droplets}\label{sec-RS}
As a second test, we simulate the dynamics of rising droplets. The test setup is taken from in \cite{Mark-JFM-1997}. In particular, the computational domain $\Omega=[0,1]\times [0,2]$ is filled with the heavier fluid ($c=0$) and a initially circular shaped lighter fluid ($c=1$) is placed inside. The initial drop has a radius of 0.25 and is centered at $[0.5,0.5]$. This leads to the initial profile of the interface given by
\begin{align}
c(r,0) = \cfrac{1}{2}\bigg(1-{\rm tanh}\big(\cfrac{r-R_{0}}{2\sqrt{2 }\epsilon}  \big)\bigg)
\end{align}
with $r=\sqrt{(x-0.5)^2+(y-0.5)^2}$ and $R_{0}=0.25$. The parameters of the outer fluid are $\rho_{2}=1000$, $\mu_{2}=10$, and $\rho_{1}=100$, $\mu_{1}=1$ for the drop fluid. The gravity is $\boldsymbol{g}=(0, 0.98)$, and the surface tension is $\sigma=24.5$, which lead to the values of following non-dimensional parameters
\begin{align}
Re=100,~We=1,~Fr=0.98,~M=\epsilon,~Pe=1/\epsilon.
\end{align}
As in the previous example, periodic conditions are imposed on the left and right boundary for the velocity $\boldsymbol{u}$, pressure $\bar{p}$, phase-field function $c$ and chemical potential $\mu_c$, and the no-slip boundary condition for the velocity, and no-flux boundary conditions for the phase-field functions $c$, $\mu_{c}$ are imposed at the upper and lower boundaries. The time step is $\delta t = 10^{-3}$ for the two methods. Moreover, to test the convergence of the diffuse-interface, we use two values of $\epsilon = 0.005~{\rm and}~0.0025$, which corresponds to the grid size $[256 \times 256]$ and $[512 \times 512]$ respectively.\\
Because the droplet is lighter than the surrounding fluid, the droplet rises. For a rigorous estimate of the accuracy of the simulation, we calculated the rising velocity that is determined by:
\begin{align}
V_{c}=\cfrac{\int_{\Omega_{}}v c~{\rm d}\boldsymbol{x}}{\int_{\Omega}c~{ \rm d} \boldsymbol{x}}
\end{align}
where $v$ is the second (vertical) component of the velocity $\boldsymbol{u}$. Moreover, to show the quasi-incompressibility of the q-NSCH model, we calculate the divergence of velocity $\boldsymbol{\nabla}_{d} \cdot \boldsymbol{u}$ at the fully discrete level.\\
Snapshots of the deformed droplet interfaces and the $\boldsymbol{\nabla} \cdot \boldsymbol{u}$ (quasi-incompressibility) are presented in Figure \ref{RS-Den1to10-quasi}, where we observe that the drop deforms slowly, resulting in a mushroom shape. Recall that the divergence-free condition does not hold for quasi-incompressible fluids with different densities because the fluids may mix slightly across the interface. The two incompressible fluids can be compressible across the interface where the two components are mixed. It can be observed that the fluid is incompressible ($\boldsymbol{\nabla} \cdot \boldsymbol{u}=0$) almost everywhere except along the moving interface. Near the interface, waves of expansion ($\boldsymbol{\nabla} \cdot \boldsymbol{u} >0$ ) and compression ($\boldsymbol{\nabla} \cdot \boldsymbol{u}<0$ ) are observed. Figure \ref{RS-Den1to10-Intf} shows the droplet shapes at the final time ($t=3$), where we observe that the droplet shapes differ clearly for different values of $\epsilon$ but seem to converge so that there is no big difference for the finest values $\epsilon=0.0025$ and the result obtained from the \cite{Mark-JFM-1997} by using a sharp interface model. Figure \ref{RS-Den1to10-Vel} plots the rising velocity of our numerical relusts and the result obtained from \cite{Mark-JFM-1997}, where the agreement improves as $\epsilon\rightarrow 0$. In Figure \ref{RS-Den1to10-En}, we show the energy dissipation of the binary fluid system obtained from both methods by using different values of $\epsilon$. Note that, to show the energy dissipation converges as $\epsilon$ is decreased, we also compute the example by using a even smaller $\epsilon=0.00125$. As expected, the energy decreases and yields very similar way for both methods. it can be observed that the  It has been confirmed that the mass of the single component and the binary fluids are preserved up to $10^{-10}$ by primitive method and up to $10^{-9}$ by the projection method. 
\subsection{Rayleigh-Taylor Instability}\label{sec-RT}
Our last test is the Rayleigh-Taylor instability which would occur for any perturbation along the interface between a heavy fluid ($c=0$) on top of a light fluid ($c=1$), and is characterised by the density difference between the two fluids. The instability is characterized by the non-dimensional parameter Atwood ratio, that $At = (\rho_{A}-\rho_{B})/(\rho_{A}+\rho_{B})$. The initial growth and long-time evolution of Rayleigh-Taylor instability was investigated by Tryggvason \cite{Tryggvason1988} for inviscid incompressible flows with zero surface tension, at $At = 0.50$. Guermond et al \cite{Guermond1967} studied this instability at the same value of At but accounted for viscous effects. Ding \cite{Ding2007} studied this instability problem by using the a different phase-field model where the velocity satisfies the divergence free constraint. We validate our code here by investigating the same problem as Guermond et al \cite{Guermond1967}, i.e., at $At = 0.50$ and $Re (=\rho_{l} L^{3/2}g^{1/2}/\mu_{l}) = 3000$, with the initial interface being located in a rectangular domain $[0, L] \times [0, 4L]$ at $\tilde{y}(x) = 2L + 0.1 L \rm{cos} (2\pi x/L)$, which represents a planar interface superimposed by a perturbation of wave number $k = 1$ and amplitude $0.1L$. Here we set $L=1$, and we take
\begin{align}
c(y,0) = \cfrac{1}{2}\bigg(1-{\rm tanh}\big(\cfrac{y-\tilde{y}}{2\sqrt{2 \epsilon}}  \big)\bigg)
\end{align}
Here we set the gravitational acceleration $g=1$ and surface tension $\sigma=0$. In the present case of zero surface tension, the Cahn-Hilliard equation simply amounts to interface tracking only. The non-dimensional parameters are set as
\begin{align}
Re=3000,~We=200,~Fr=1,~M=\epsilon,~Pe=1/\epsilon.
\end{align}
Periodic conditions are imposed on the left and right boundary for $\boldsymbol{u}$, $c$ and $\mu_{c}$. At the upper and lower boundaries, we impose the no-slip boundary condition for $\boldsymbol{u}$, and no-flux boundary conditions for $c$ and $\mu_{c}$. We use two values of $\epsilon = 0.005,~{\rm and}~0.0025$ which corresponds to the grid size $[128 \times 512]$ and $[256 \times 1024]$. We set time-step $\delta t=10^{-3}$. Results are presented in Figure \ref{RT-position} in terms of the y-coordinate of the top of the rising fluid and the bottom of the falling fluid, together with the corresponding previous results of Tryggvason \cite{Tryggvason1988}, Guermond et al \cite{Guermond1967} (sharp interface models) and Ding \cite{Ding2007} (another phase-field model). For both methods, good agreement is observed with these results. As the value of $\epsilon$ decreases, our numerical resutls converge to the other numerical results. The evolution of the interface of our numerical results (projection with $\epsilon=0.0025$) and the results in \cite{Ding2007} are shown in Figure \ref{RT-2}, in which the rolling-up of the falling fluid can be clearly seen. At the early time, two counter-rotating vortices are formed along the sides of the falling filament and grow with time. To a certain extent, the two vortices are shed and a pair of secondary vortices occurs at the tails of the roll-ups. Our results agree with those obtained in \cite{Ding2007}. However, comparing to their results, the small structures around the vortices are preserved and can be observed more clearly due to the mass conservative property of our numerical methods. The time evolution of the energy is shown in Figure \ref{RT-En}, which decreases as expected. It has been confirmed that the mass of the single component and the binary fluids are preserved up to $10^{-12}$ by primitive method and up to $10^{-10}$ by the projection method. 
\section{Convergence test}\label{accuracy-sec}
To show our two methods are both first order accurate in time and second order accurate in space, we carry out a convergence test by considering the Cauchy sequence. We compute our q-NSCH system with the following function as the initial condition for the phase variable $c$, 
	\begin{align}
c(x,y)=\cos( 2\pi x )+\cos( 2\pi y ),
	\label{eqn-3d-exact-solution}
	\end{align}
and zero for all the other variables, including $\mu$, $p$ and $\boldsymbol{u}$.	 The computational domain is $[0,1]\times [0,1]$, and the homogeneous Neumann boundary conditions for $c$ $\mu$ and $p$, and $\boldsymbol{u}$ are applied on the boundary. We refine the mesh and time step according to the schedule
	\begin{align*}
m=16 \qquad &{\rm and} \qquad \Delta t= 1/16,
	\\
m=32 \qquad &\mbox{and} \qquad \Delta t = 1/64,
	\\
m=64 \qquad &\mbox{and} \qquad \Delta t = 1/256,
	\\
m=128  \qquad &\mbox{and} \qquad \Delta t = 1/1024.
	\\
m=256  \qquad &\mbox{and} \qquad \Delta t = 1/4096.
	\end{align*}
Here $m$ is the grid points in both $x$ direction and $y$ direction, $\Delta t$ is the time step. To compare solutions on different grid resultions, we push the solution at the coarse grid up to the next fine grid to calculate the difference in $L_2$ norm, and then obtain the convergence rate which are shown in Table \ref{tab01}. The second order convergence rate are achieved for both methods, which indicates that both of our methods are second order accurate in space and first order accurate in time. Moreover, we also present the average computational cost for each time step of both methods with different grid resolutions in Table \ref{tab02}, which indicates that the Projection method is much more efficient than the Primitive method. All the tests are carried out on a desktop with 4.0GHz AMD(R) FX(TM)-8350 processor.
	\begin{table}
	\begin{center}
	\begin{tabular}{|c||c|c||c|c|}
\hline	
&\multicolumn{2}{c||}{Primitive}&\multicolumn{2}{c|}{Projection}
	\\
	\hline	
$m$ & error&rate& error&rate
	\\
	\hline
16~{\&}~32 &6.063$\times 10^{-3}$ & ---&6.409$\times 10^{-3}$ &---
	\\
	\hline

32~{\&}~64&2.310$\times 10^{-3}$&1.40&2.603$\times 10^{-3}$ &1.30 	\\
	\hline

64~{\&}~128&6.812$\times 10^{-4}$&1.76&7.814$\times 10^{-4}$ &1.73
	\\
	\hline
128~{\&}~256& 1.622$\times 10^{-4}$&2.06&1.917$\times 10^{-4}$ &2.03
	\\
	\hline
	\end{tabular}
	\end{center}

\caption{Convergence test for the primitive method and projection method. }
	\label{tab01}
	\end{table} 

	\begin{table}
	\begin{center}
	\begin{tabular}{|c||c|c||}
\hline	
&\multicolumn{2}{c||}{Ave. CPU time per time step (sec)} 
	\\
	\hline	
&{~~~~~Primitive~~~~~}&{Projection}
	\\
	\hline
16 &2.913$\times 10^{-2}$ & 1.075$\times 10^{-2}$
	\\
	\hline
32 &1.659$\times 10^{-1}$ & 3.816$\times 10^{-2}$
	\\
	\hline
64&2.659$\times 10^{-1}$&1.149$\times 10^{-1}$ 
\\
	\hline
128&1.027$\times 10^{0}$&4.170$\times 10^{-1}$ 
	\\
	\hline
256& 2.622$\times 10^{0}$&1.246$\times 10^{0}$ 
	\\
	\hline
	\end{tabular}
	\end{center}

\caption{Average computational cost for the Primitive method and Projection method.}
	\label{tab02}
	\end{table}

\section{Conclusion}
In this paper we presented and analyzed two fully mass conservative and energy stabel finite difference schemes for the q-NSCH sytem governing the binary incompressible fluid flows with variable density and viscosity. At the continuous level, we reformulated the system equations and show that the system conserves both the component and binary fluid mass and the energy is non-increasing due to the energy dissipation law that underlies the model. Based on the reformulated system, we introduced two time-discrete and fully discrete numerical methods using primitive variable and projection-type formulation. Both schemes are fully mass conserving and the extra mass correction is not necessary. Moreover, the fully discrete energy stability are achieved for both methods, where the enregy functionals are always non-increasing. In particular, our projection method differs from the traditional projection methods in two ways: \\
(a) due to the quasi-incompressibility of the q-NSCH model, the pressure here is used to correct the intermediate velocity to get the updated velocity that satisfies the quasi-incompressible constraint, whereas in most of the existing projection methods the intermediate velocity is projected onto a space of divergence-free velocity field due to the corresponding model constraints (See \cite{JieSIAM2010} as example);\\
(b) in our projection method, a pressure-Poisson equation (continuity equation (\ref{pj-mass})) naturally occurs in the reformulated system equations, whereas, in most of the projection methods, the extra construction of the pressure-Poisson equation is required (this is usually done by applying the divergence operator to the decoupled equation (\ref{pj-pj}));
We also present an efficient nonlinear FAS multigrid method for each method, which is motivated by a Vanka-type smoothing strategy for the Cahn-Hilliard-Brinkman equation \cite{SteveMultigrid2013}.\\
Three numerical examples are investigated numerically, including the Capillary Wave, Rayleigh-Taylor instability and rising droplets. Quantitative comparisons are made with the existing analytical solution or the previous numerical results to validate the accuracy of our numerical schemes. Moreover it has been confirmed that the mass of the single fluid and the binary fluids are preserved up to $10^{-8}$, and the energy is always non-increasing for all the examples. The convergence property of the q-NSCH is investigated as well. In particular, our numerical results converge to the analytical or numerical solutions of the other sharp interface models as the thickness of the diffuse-interface decreases. Moreover, in the rising droplet example, we show that the quasi-incompressibility ($\boldsymbol{\nabla} \cdot \boldsymbol{u}\ne 0$) is captured smoothly together with the move interface which indicates that the quasi-incompressibility does not give any problems to our schemes.
\section*{Acknowledgement}
ZG gratefully acknowledge partial support from the $150^{\rm th}$ Anniversary Postdoctoral Mobility Grant of London Mathematical Society (PMG14-15 09). JL and ZG gratefully acknowledge partial support from National Science Foundation Grants NSF-DMS-1719960, NSF-DMS-1522775 and the National Institute of Health grant P50GM76516 for the Center of Excellence in Systems Biology at the University of California, Irvine. PL is partially supported by the National Natural Science Foundation of China (No. 91430106) and the Fundamental Research Funds for the Central Universities (No. 06500073). SMW gratefully acknowledges support from a grant from the National Science Foundation (NSF-DMS 1418692) and partial support from the Mathematics Department at the University of California, Irvine. Part of this work was completed during a sabbatical visit (03/2015-06/2015) spent at that institution.
\newpage
\begin{figure}
        \begin{subfigure}{0.5\textwidth}
                \centering
                \includegraphics[width=\textwidth]{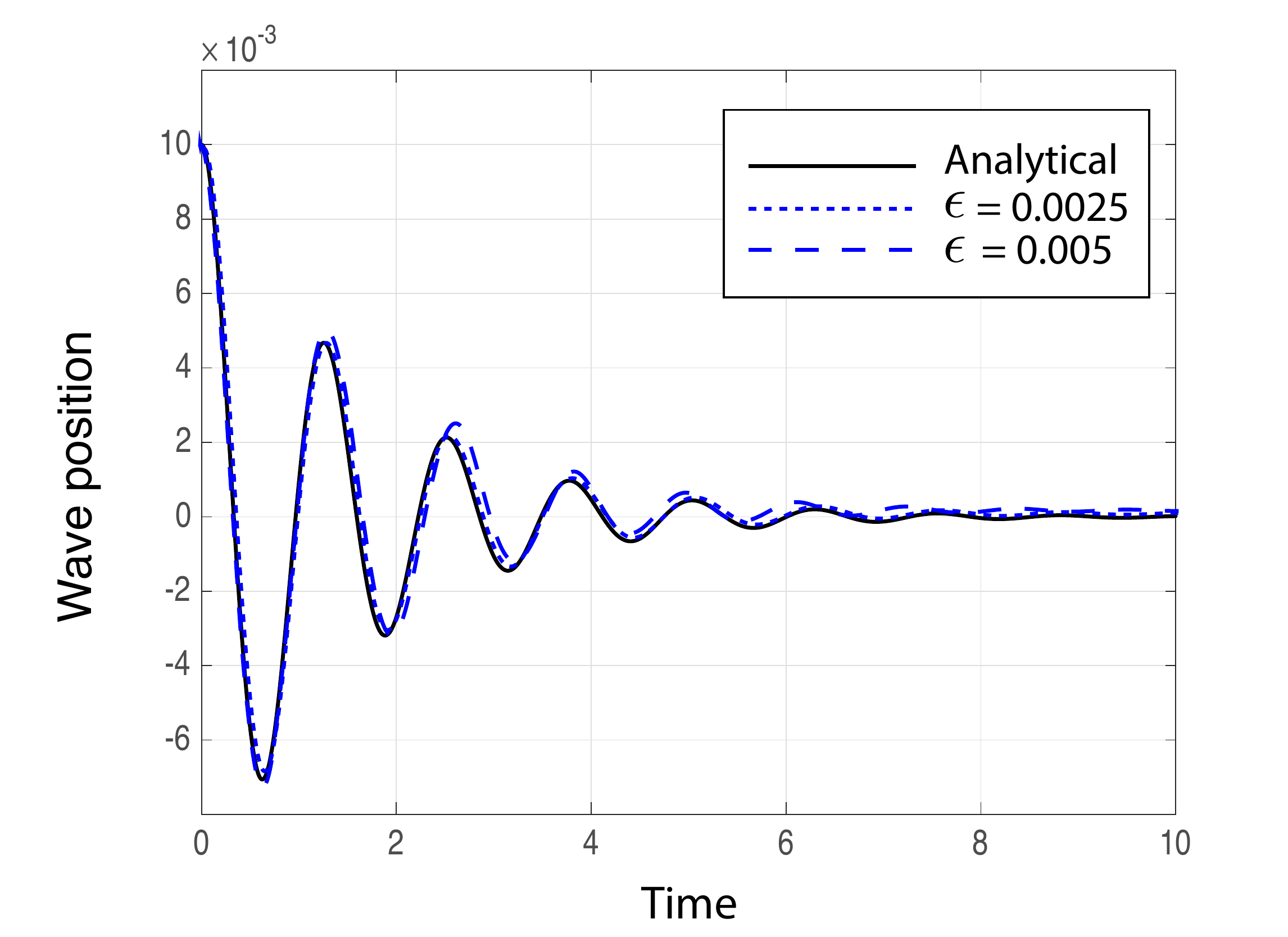}
                               \caption{Primitive method}
        \end{subfigure}%
        \begin{subfigure}{0.5\textwidth}
                \centering
                \includegraphics[width=\textwidth]{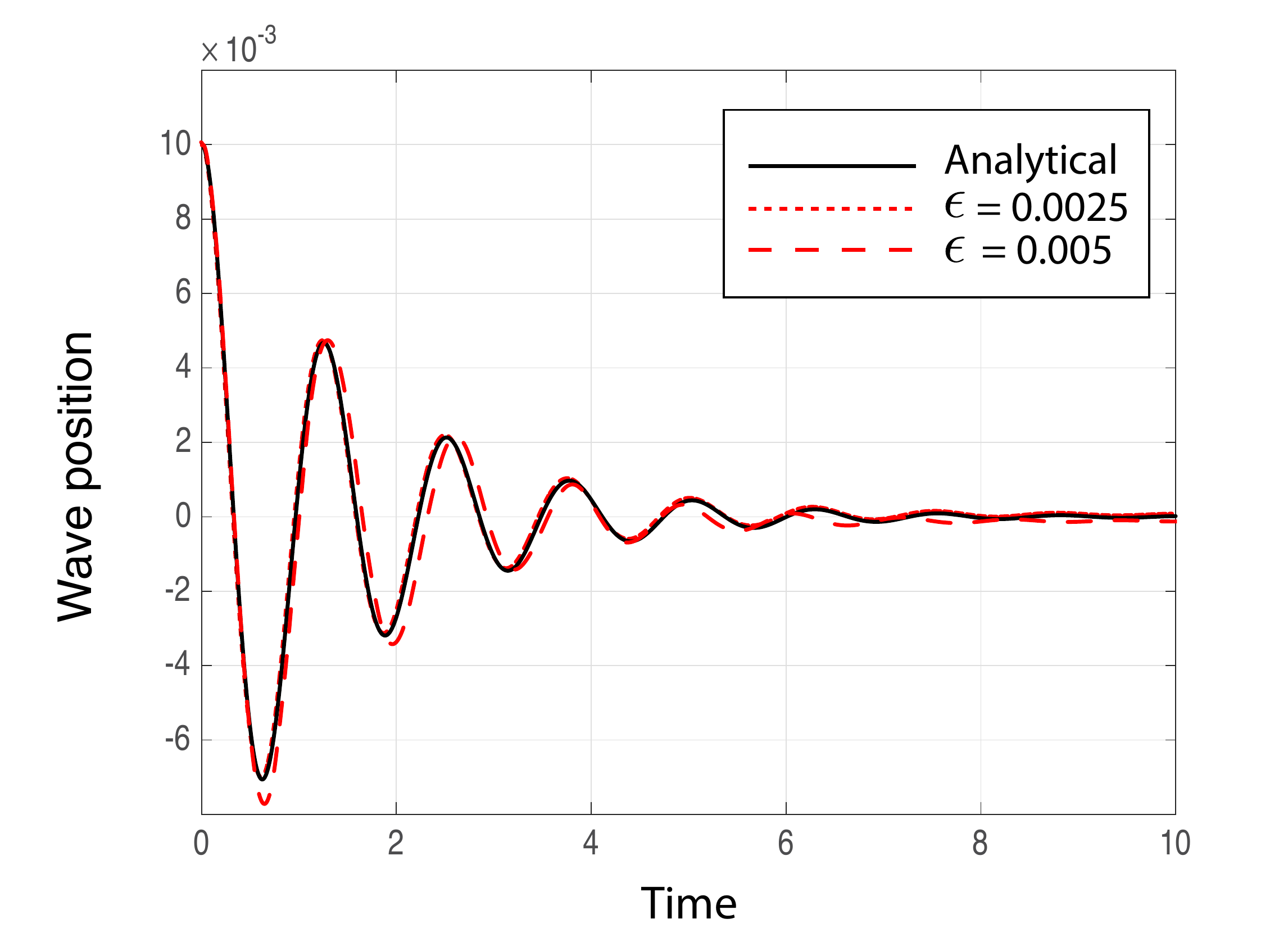}
                               \caption{Projection method}
        \end{subfigure}%
               \caption{Time evolution of capillary wave amplitude $H(t)$ in Case 1 with density ratio $1:10$ in \S \ref{sec-CW}. (a): results from Primitive methods; (b) results from Projection method.}\label{CAP-Den1to10-intf}
\end{figure}
\begin{figure}
        \begin{subfigure}{0.5\textwidth}
                \centering
                \includegraphics[width=\textwidth]{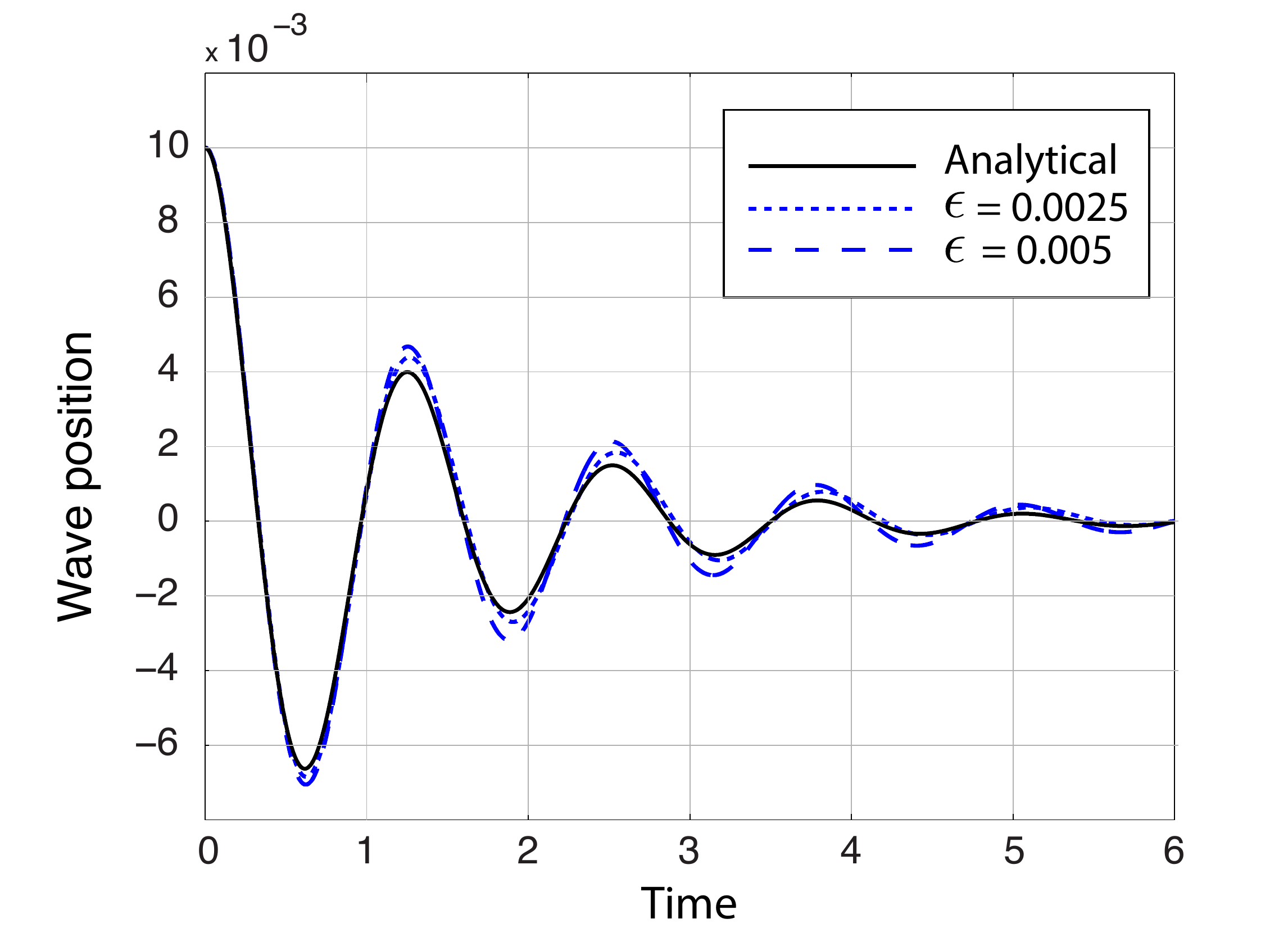}
                               \caption{Primitive method}
        \end{subfigure}%
        \begin{subfigure}{0.5\textwidth}
                \centering
                \includegraphics[width=\textwidth]{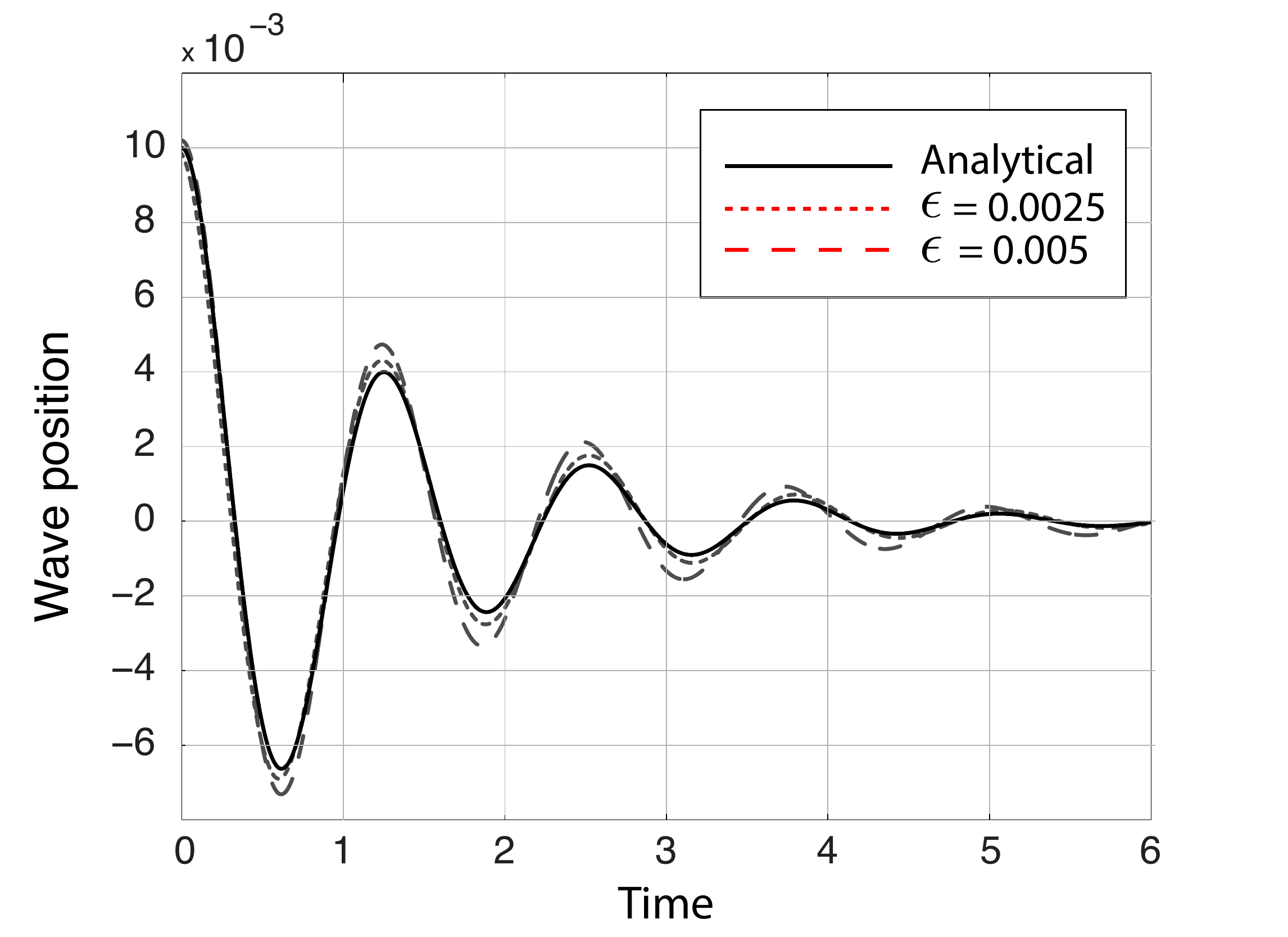}
                               \caption{Projection method}
        \end{subfigure}%
               \caption{Time evolution of capillary wave amplitude $H(t)$ in Case 2 with density ratio $1:1000$ in \S\ref{sec-CW}. (a): results from Primitive methods; (b) results from Projection method.}\label{CAP-Den1to1000-intf}
\end{figure}
\begin{figure}
        \begin{subfigure}{0.5\textwidth}
                \centering
                \includegraphics[width=\textwidth]{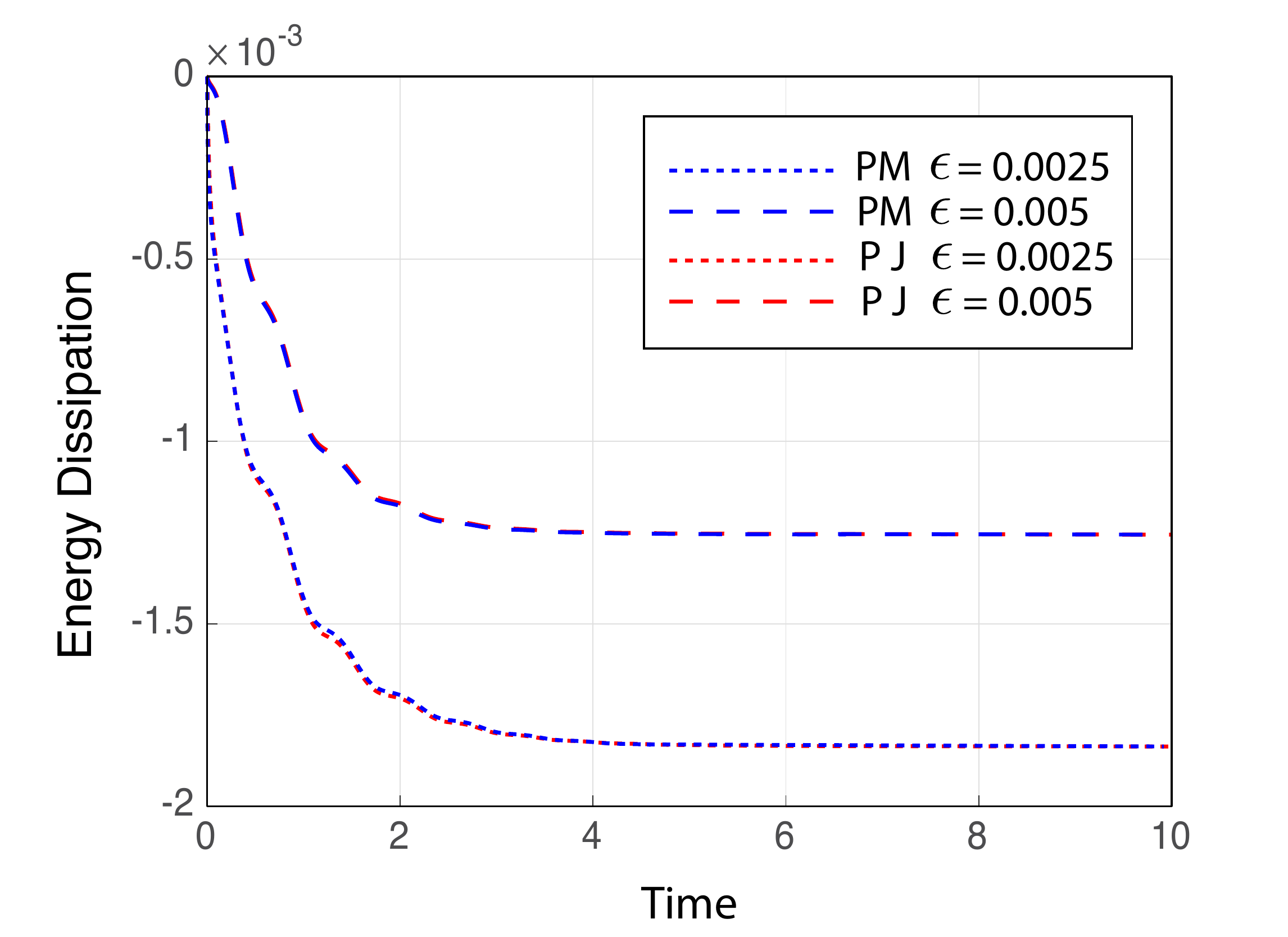}
                \caption{Case 1: Density ratio 1:10}
         \end{subfigure}
        \begin{subfigure}{0.5\textwidth}
                \centering
                \includegraphics[width=\textwidth]{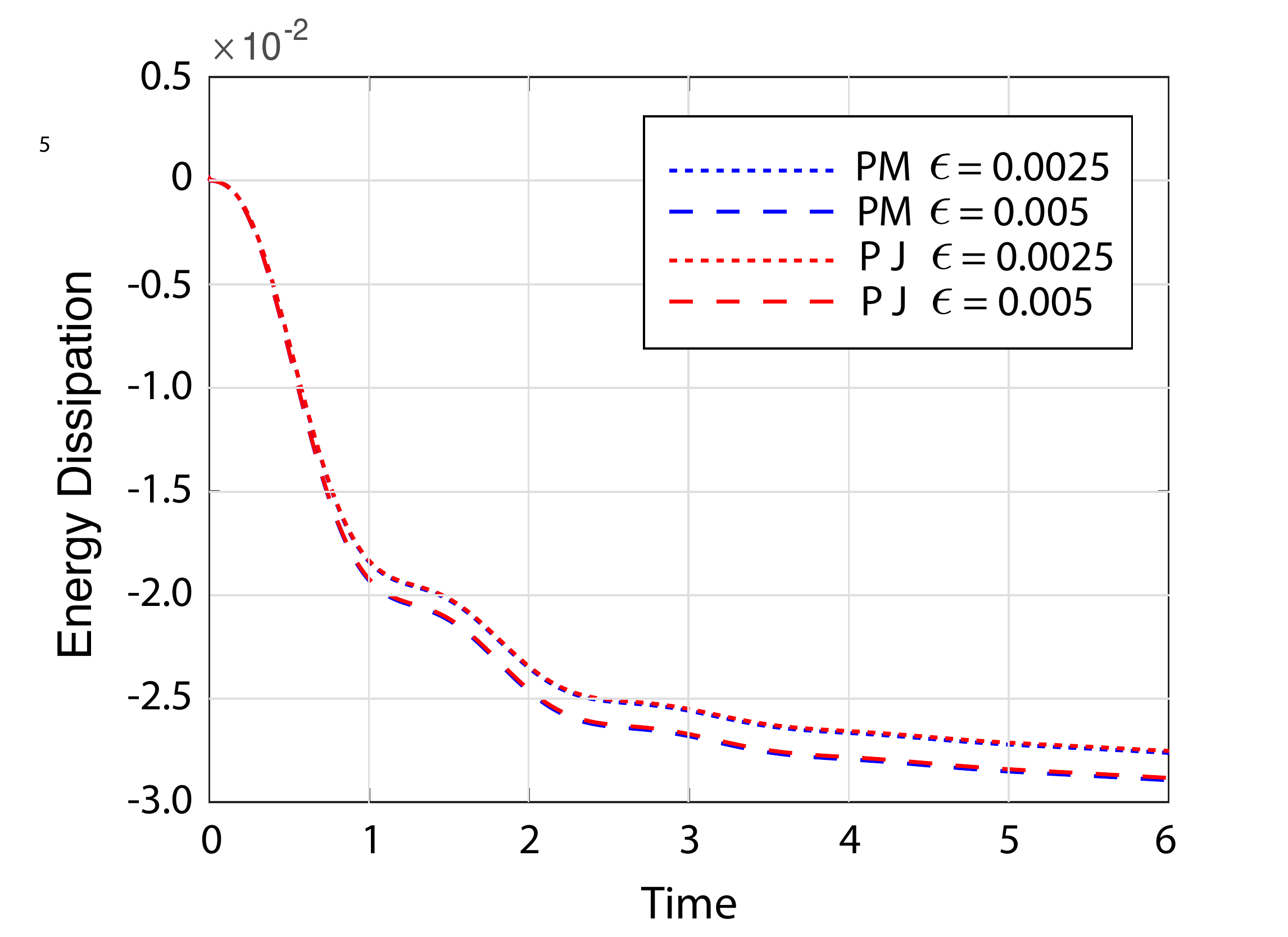}
                \caption{Case 2: Density Ratio 1:1000}
                         \end{subfigure}
               \caption{Time evolution of the energy difference $E^{n}-E^{0}$ of the binary fluid system for (a) Case 1 and (b) Case 2 with density ratio $1:10$ and $1:1000$ respectively in \S \ref{sec-CW}. The blue (red) dotted lines denotes the solution from Primitive (Projection) method with different values of $\epsilon$. Both methods give very similar results.}\label{CAP-Den1to10-En}
\end{figure}
\begin{figure}
        \begin{subfigure}{0.33\textwidth}
                \centering
                \includegraphics[width=\textwidth]{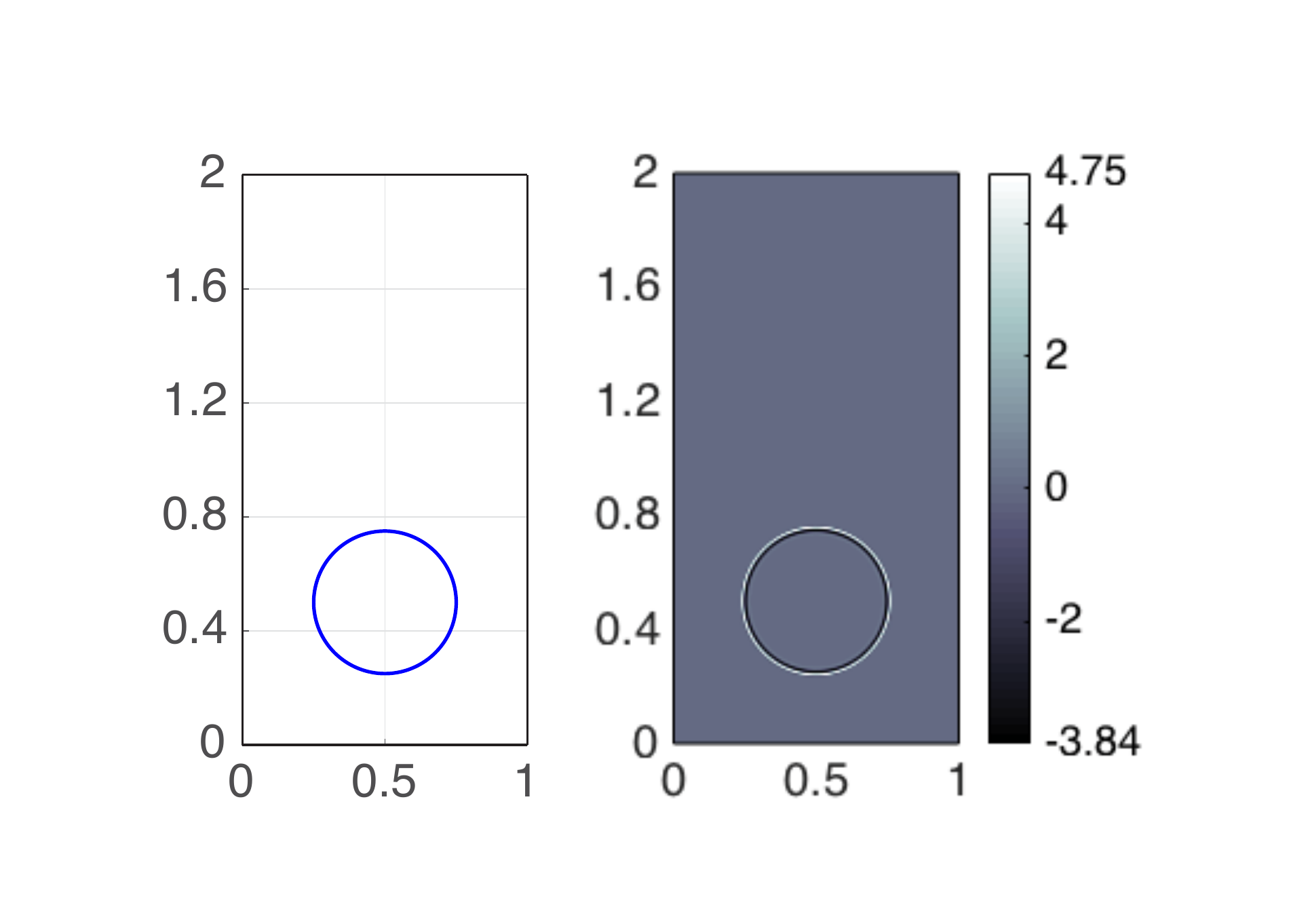}
                \caption*{t=0}
        \end{subfigure}%
        \begin{subfigure}{0.33\textwidth}
                \centering
                \includegraphics[width=\textwidth]{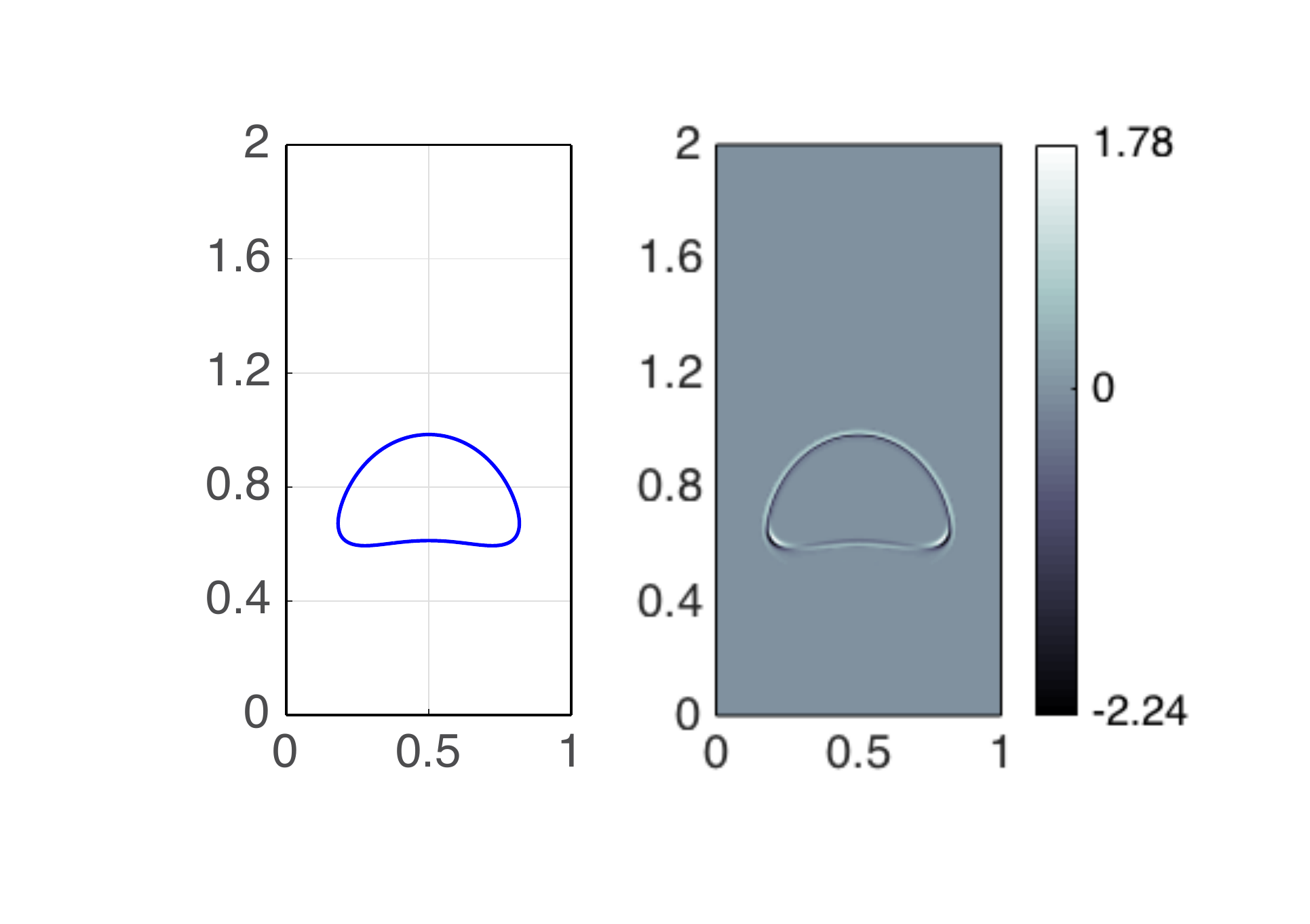}
                \caption*{t=1.5}
        \end{subfigure}%
        \begin{subfigure}{0.33\textwidth}
                \centering
                \includegraphics[width=\textwidth]{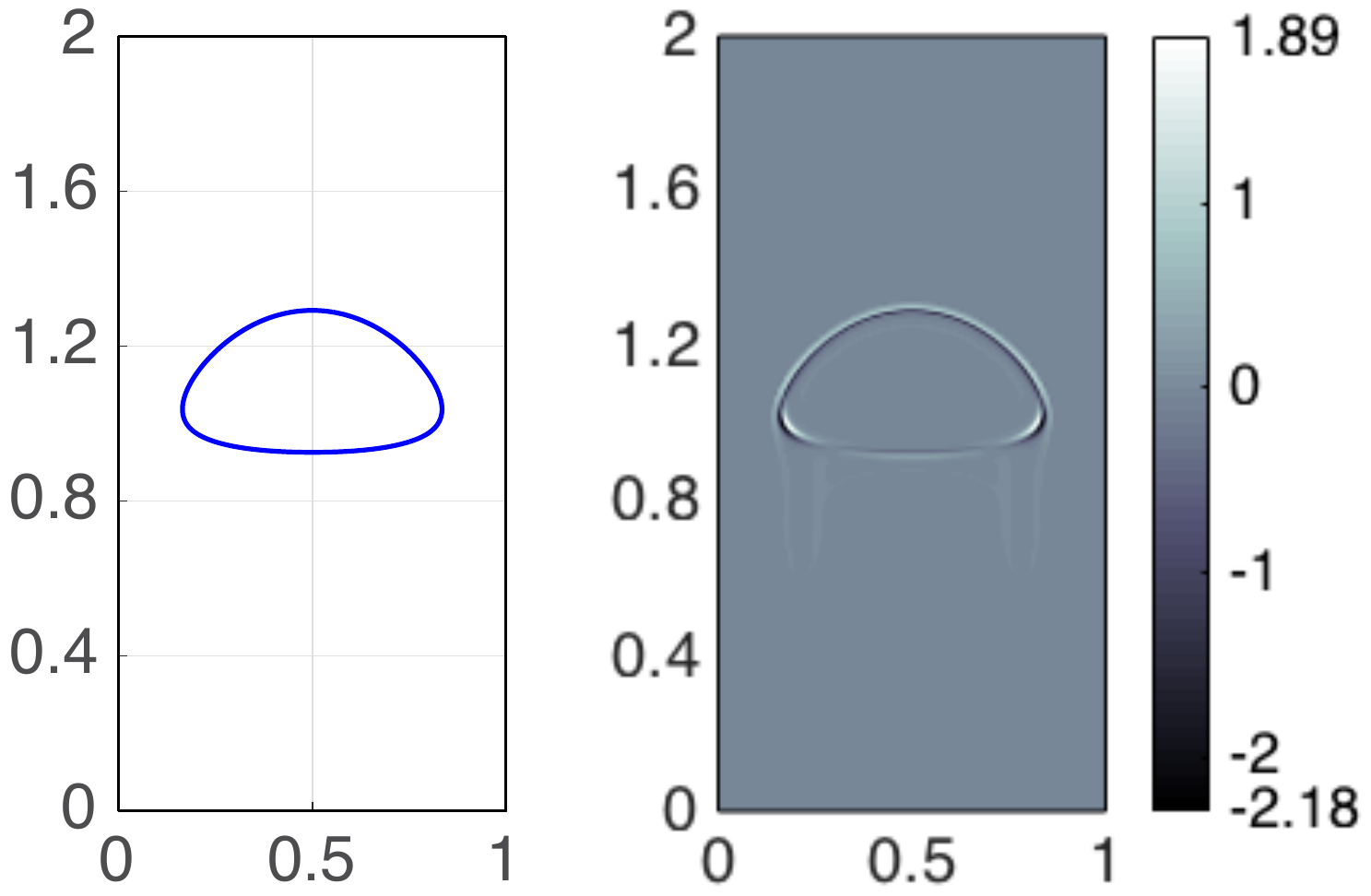}
                \caption*{t=3}
        \end{subfigure}
        \caption*{(a) Primitive method}
                \begin{subfigure}{0.33\textwidth}
                \centering
                \includegraphics[width=\textwidth]{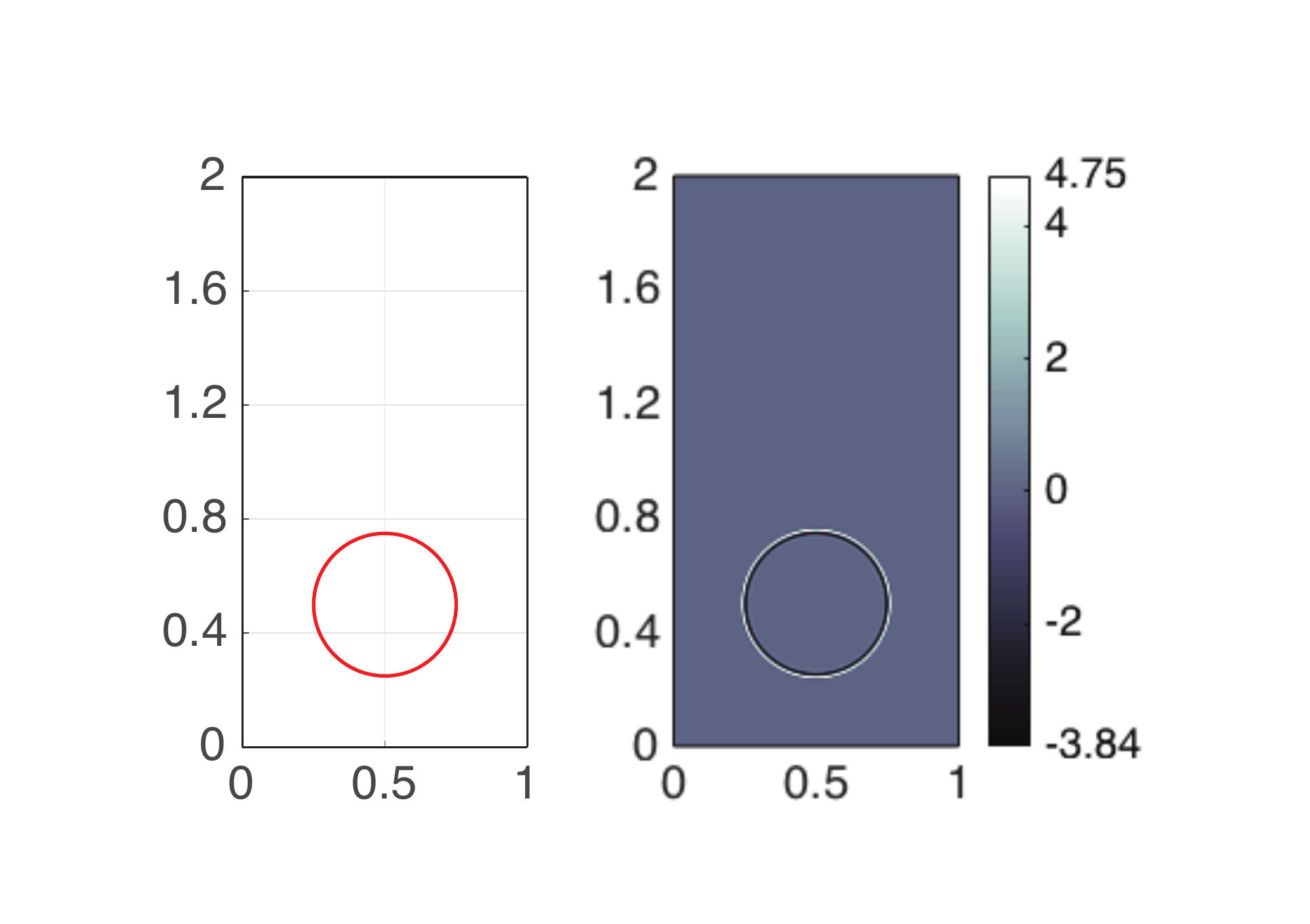}
                \caption*{t=0}
        \end{subfigure}%
        \begin{subfigure}{0.33\textwidth}
                \centering
                \includegraphics[width=\textwidth]{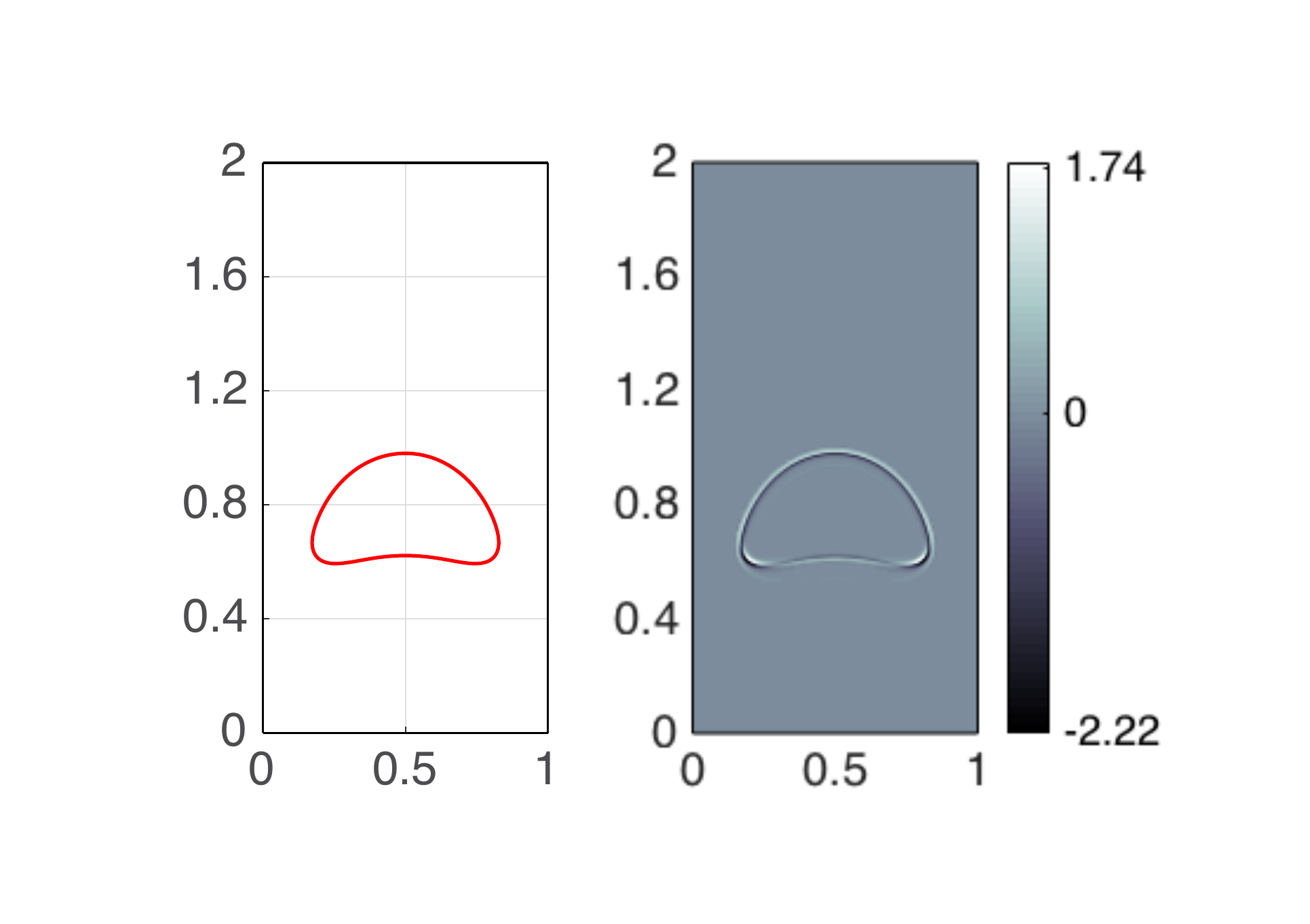}
                \caption*{t=1.5}
        \end{subfigure}
        \begin{subfigure}{0.33\textwidth}
                \centering
                \includegraphics[width=\textwidth]{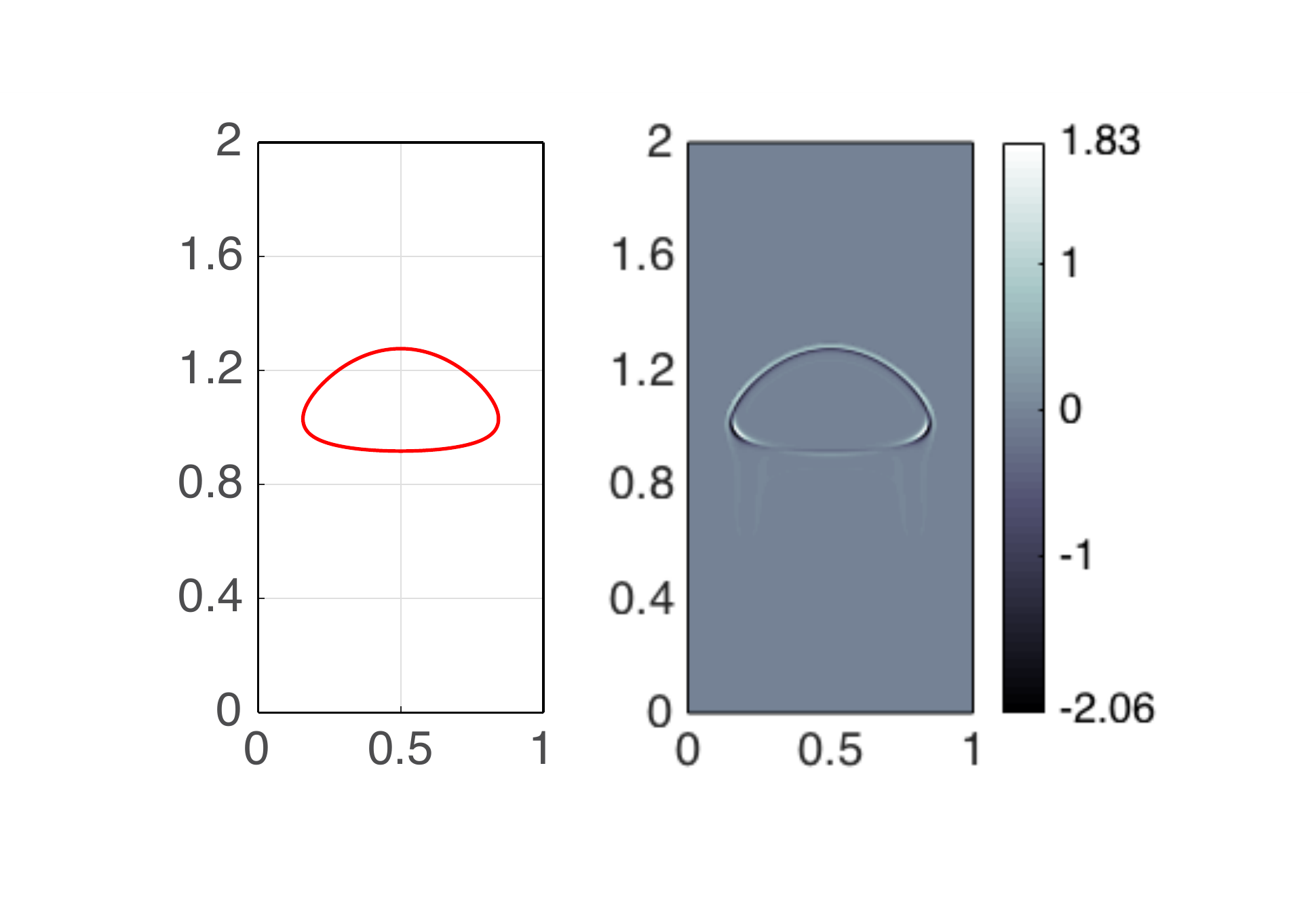}
                \caption*{t=3}
        \end{subfigure}
         \caption*{(b) Projection method}
               \caption{Deformed droplet interfaces (left) and $\boldsymbol{\nabla} \cdot \boldsymbol{u}$ (right) at different times, $t=0,~1.5,~3$ for \S \ref{sec-RS}. (a): results from Primitive methods; (b) results from Projection method.}\label{RS-Den1to10-quasi}
\end{figure}
\begin{figure}
        \begin{subfigure}{0.5\textwidth}
                \centering
                \includegraphics[width=\textwidth]{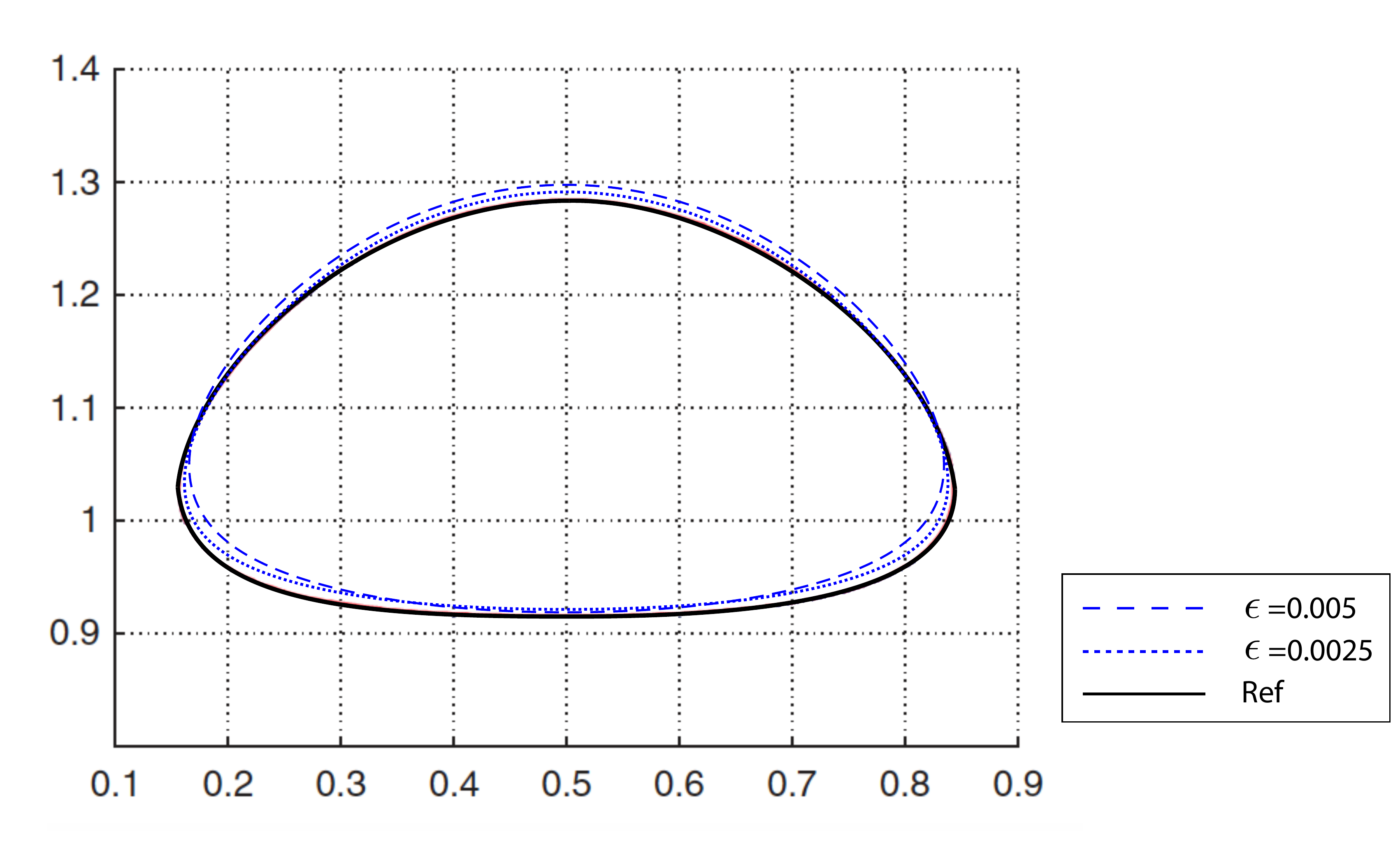}
                \caption{Primitive method}
        \end{subfigure}%
        \begin{subfigure}{0.5\textwidth}
                \centering
                \includegraphics[width=\textwidth]{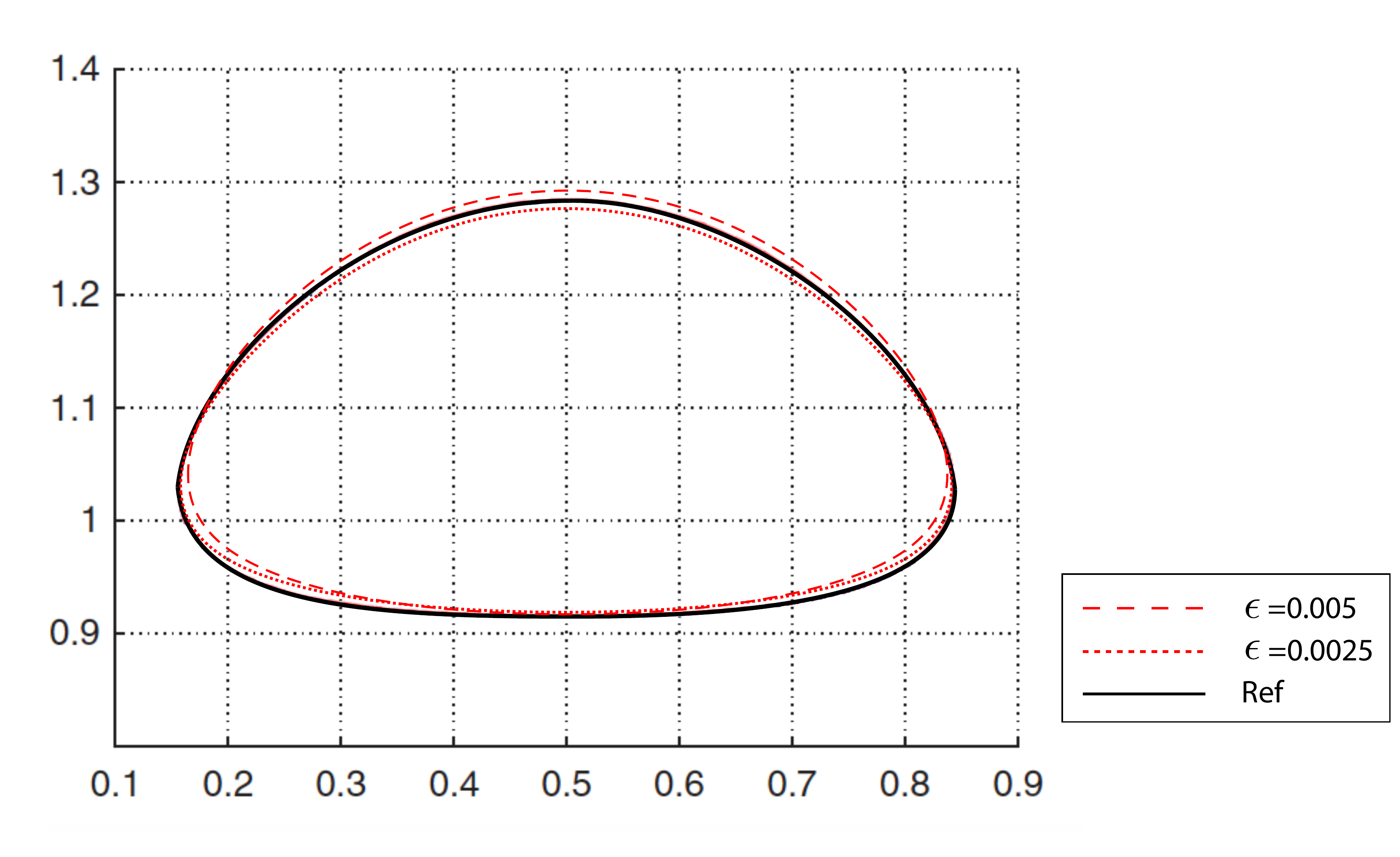}
                \caption{Projection method}
        \end{subfigure}
               \caption{Comparison between droplet shapes at $t=3$ in \S \ref{sec-RS}.  (a) The comparison between the results from our primitive method (blue dotted lines) and the result (black solid line) from Ref \cite{Mark-JFM-1997} by using a sharp interface model. (b) The comparison between the results from our projection method (red dotted lines) and the result (black solid line) from Ref \cite{Mark-JFM-1997}.}\label{RS-Den1to10-Intf}
\end{figure}
\begin{figure}
        \begin{subfigure}{0.5\textwidth}
                \centering
                \includegraphics[width=\textwidth]{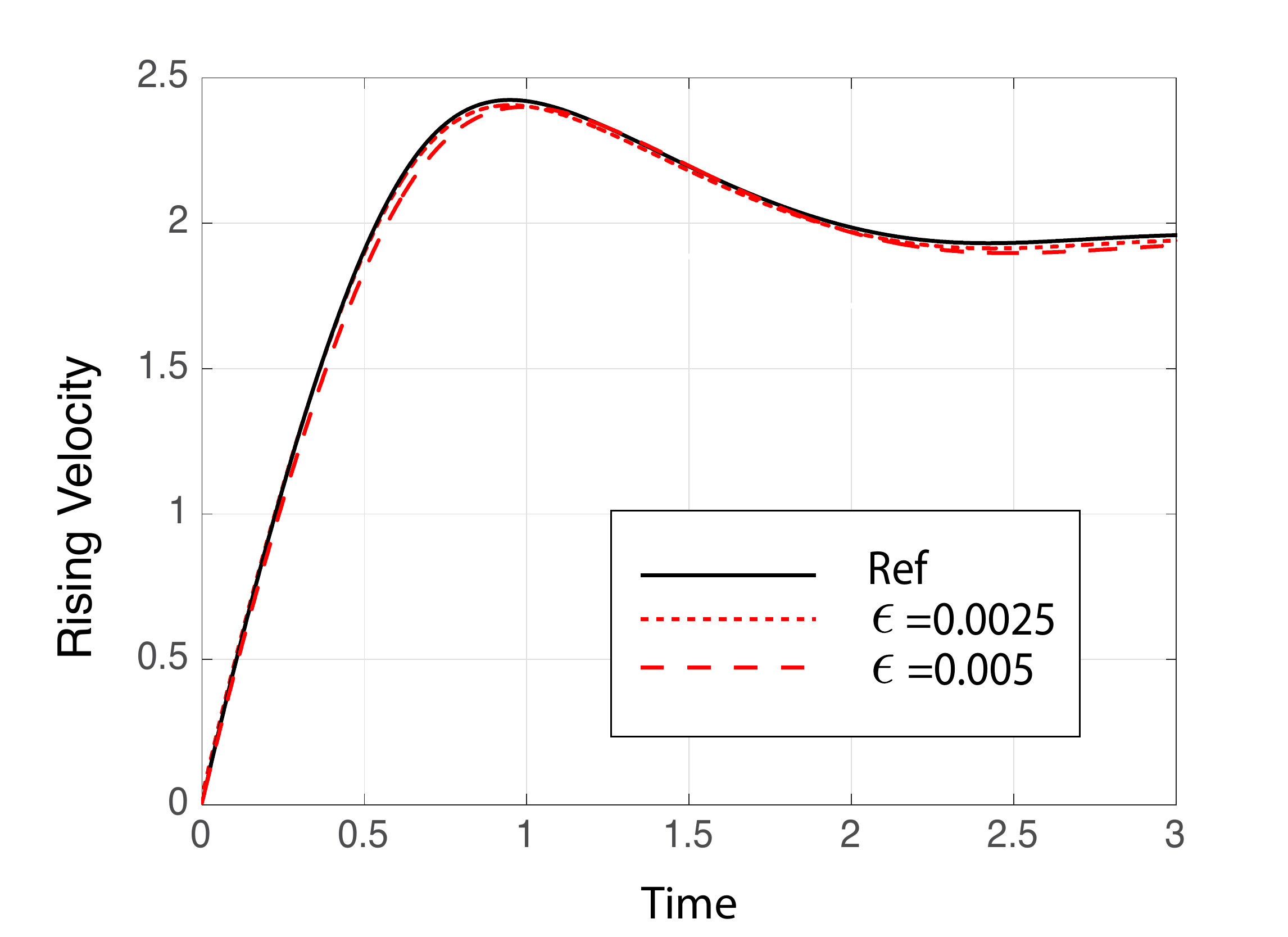}
               \caption{Primitive method}
        \end{subfigure}%
        \begin{subfigure}{0.5\textwidth}
                \centering
                \includegraphics[width=\textwidth]{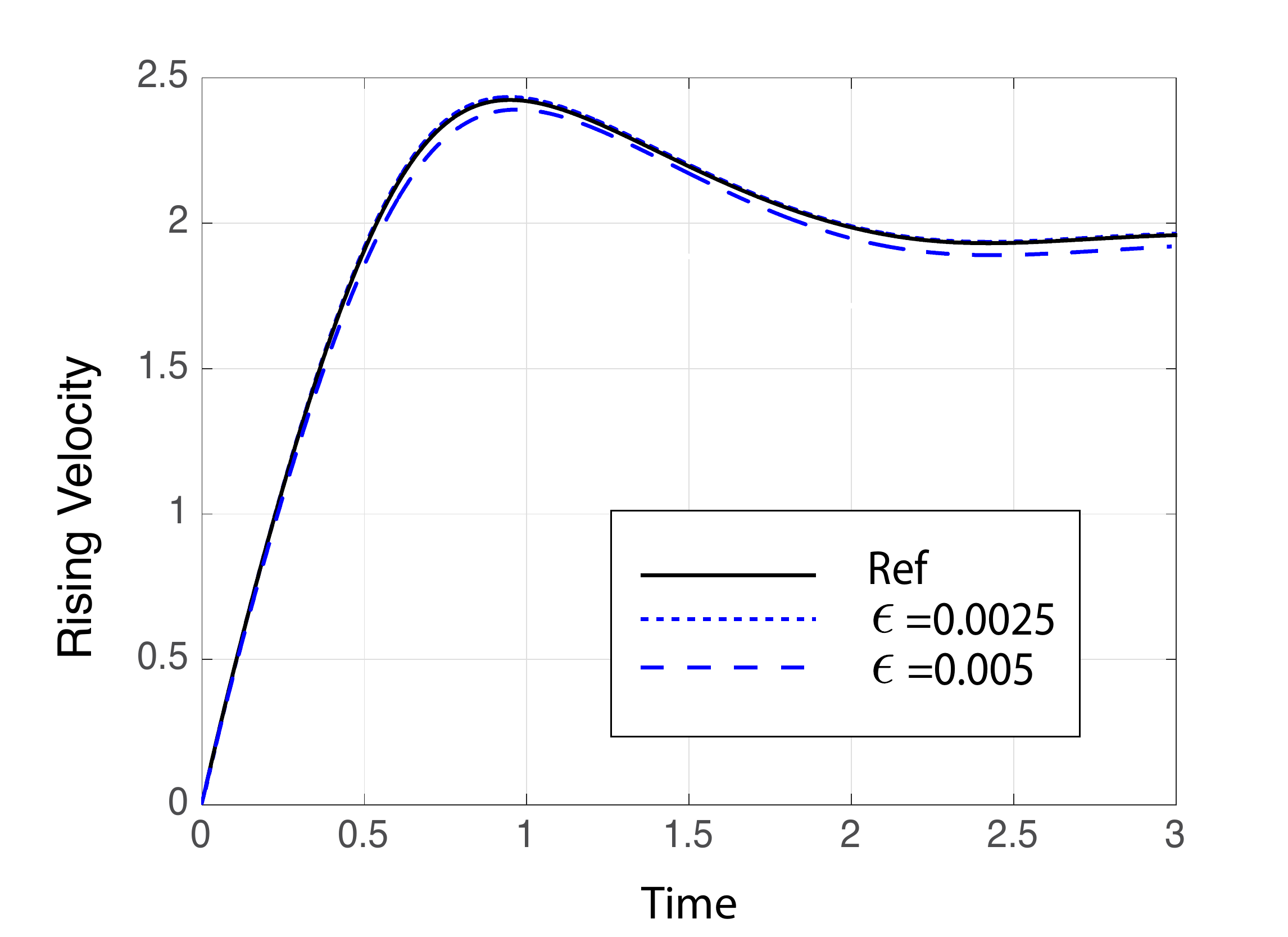}
               \caption{Projection method}
        \end{subfigure}%
                       \caption{Comparisions of benchmark quantity: rising velocity of a droplet in \S \ref{sec-RS}. (a) Primitive method vs Ref \cite{Mark-JFM-1997} by using a sharp interface model; (b) Projection method vs Ref \cite{Mark-JFM-1997}.}\label{RS-Den1to10-Vel}
\end{figure}
\begin{figure}
                \centering
                \includegraphics[width=0.5\textwidth]{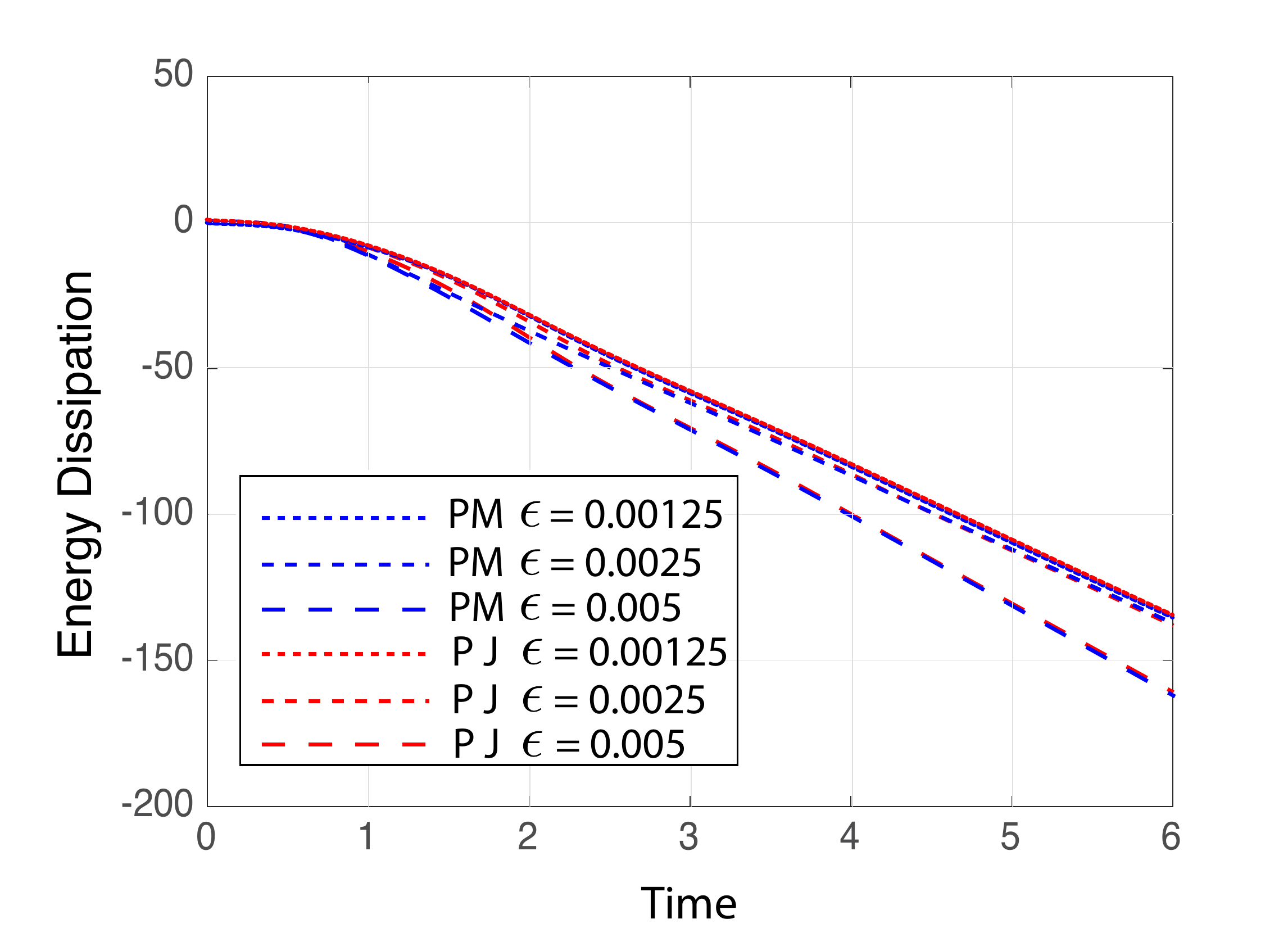}
               \caption{Time evolution of the energy dissipation $E^{n}-E^{0}$ of the binary fluid system in \S \ref{sec-RS}. The blue (red) dotted lines denotes the solution from Primitive (Projection) method with different values of $\epsilon$.}\label{RS-Den1to10-En}
\end{figure}
 \begin{figure}
 \vspace{-10mm}
        \begin{subfigure}{0.5\textwidth}
                \centering
                \includegraphics[width=\textwidth]{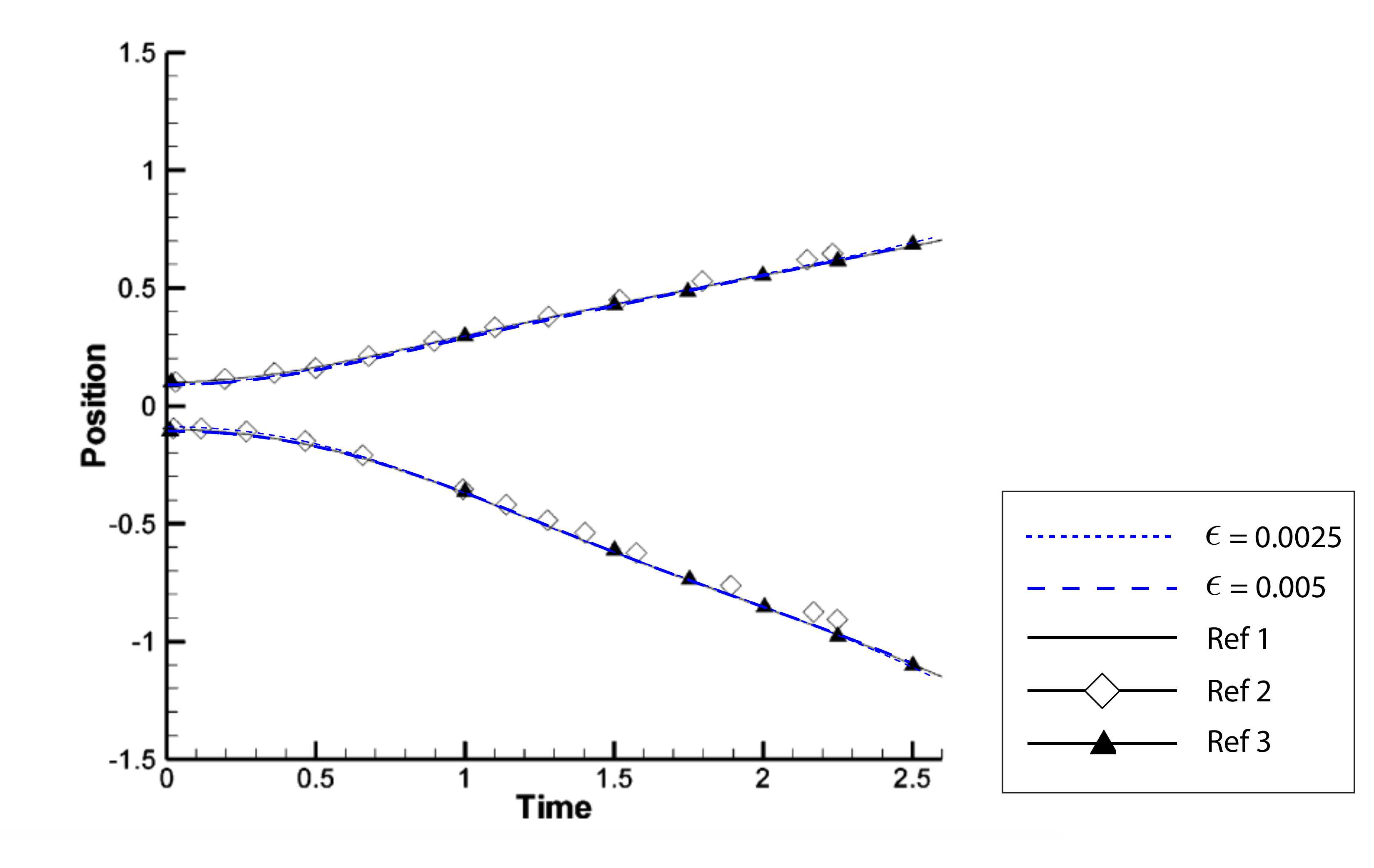}
                               \caption{Primitive method}
        \end{subfigure}%
        \begin{subfigure}{0.5\textwidth}
                \centering
                \includegraphics[width=\textwidth]{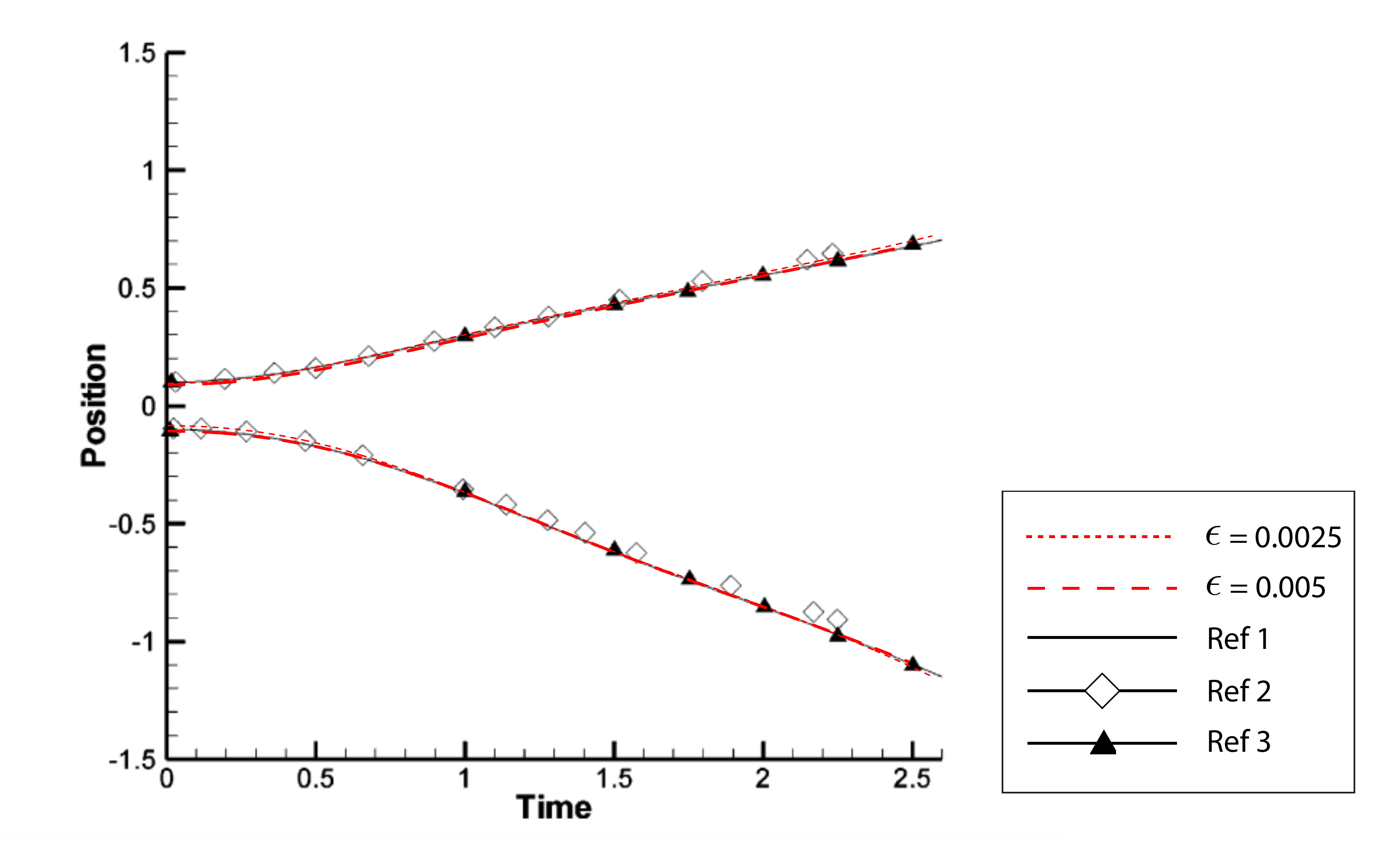}
                               \caption{Projection method}
        \end{subfigure}%
                      \caption{Comparison between the numerical results. The y-coordinate of the tip of the falling and rising fluid versus time: the open diamonds represents the solution of Tryggvason \cite{Tryggvason1988}, the filled triangles that of Guermond et al. \cite{Guermond1967}, the black solid line represents that of Ding \cite{Ding2007}, and (a) the blue doted lines denote the solution from Primitive method; (b) the red doted lines denote the solution from Projection method.} \label{RT-position}
\end{figure}
\begin{figure}
\vspace{0mm}
\centering
        \begin{subfigure}{0.95\textwidth}
                \centering
                 \includegraphics[width=\textwidth]{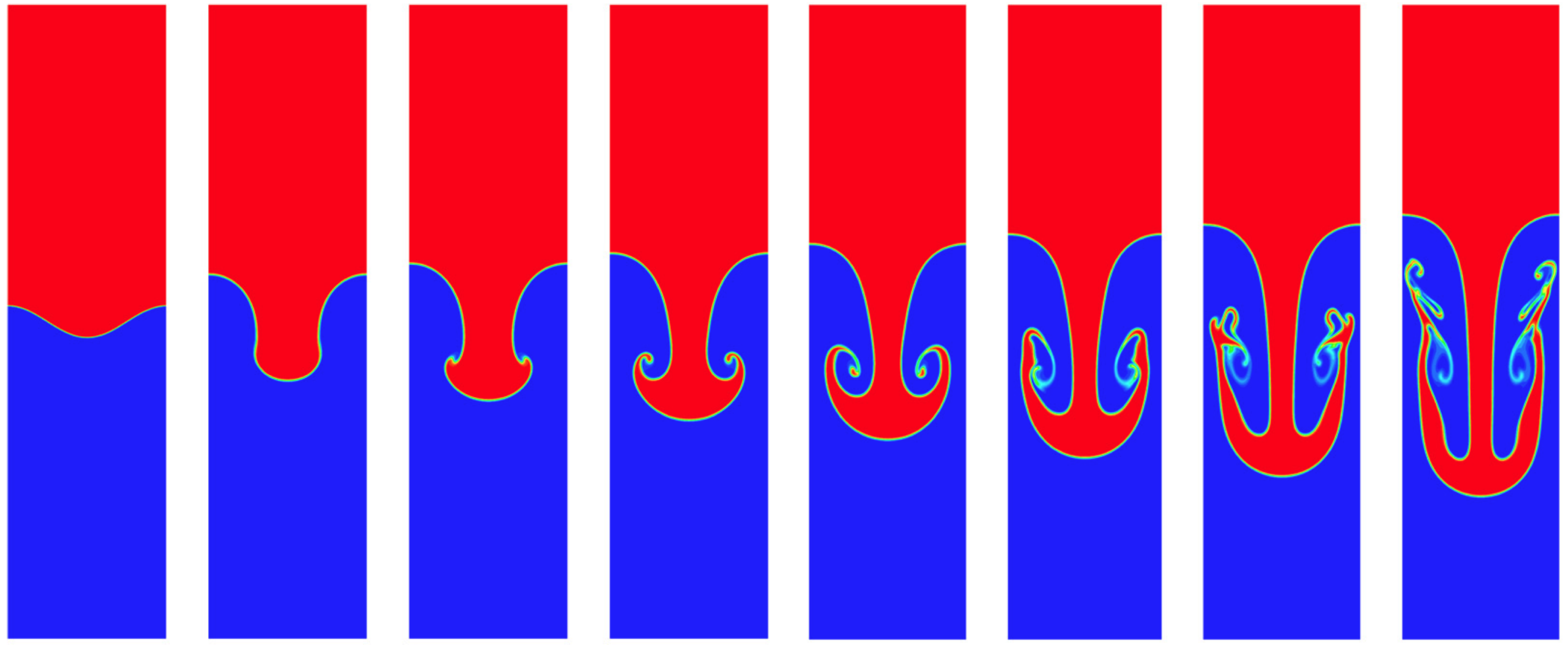}
\caption{Existing numerical results obtained from \cite{Ding2007}}
        \end{subfigure}
        \begin{subfigure}{0.95\textwidth}
                \centering
                 \includegraphics[width=\textwidth]{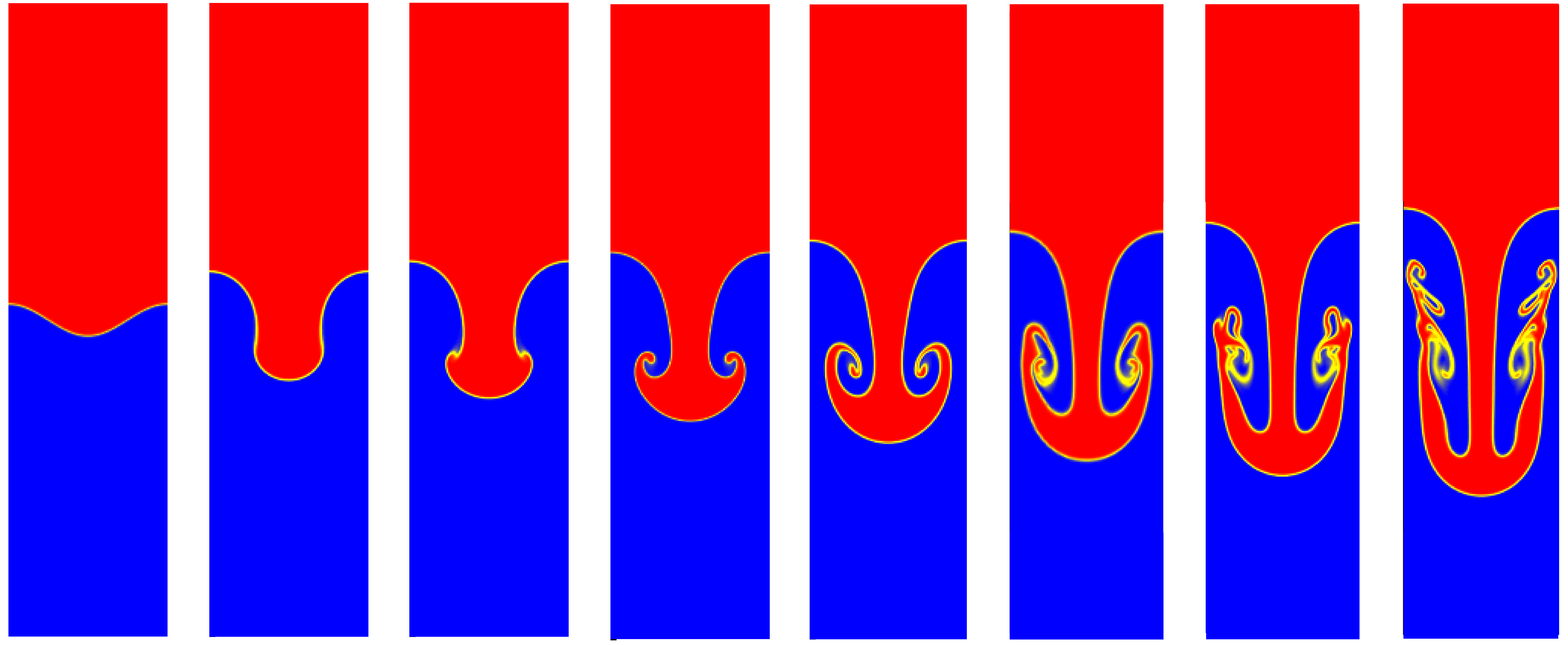}
             \caption{Numerical results obtained from projection method} 
        \end{subfigure}
              \caption{Rayleigh-Taylor instability simulation at different times ($t = 0,1,1.25,1.5,1.75, 2, 2.25, 2.5$), with density ratio 1:3 in \S \ref{sec-RT}. The numerical results from the projection method with $\epsilon=0.0025$ are shown at the bottom being compared with the results in \cite{Ding2007} at the top by using a different diffuse-interface model.}\label{RT-2}
\end{figure}
\begin{figure}
                \centering
                \includegraphics[width=0.5\textwidth]{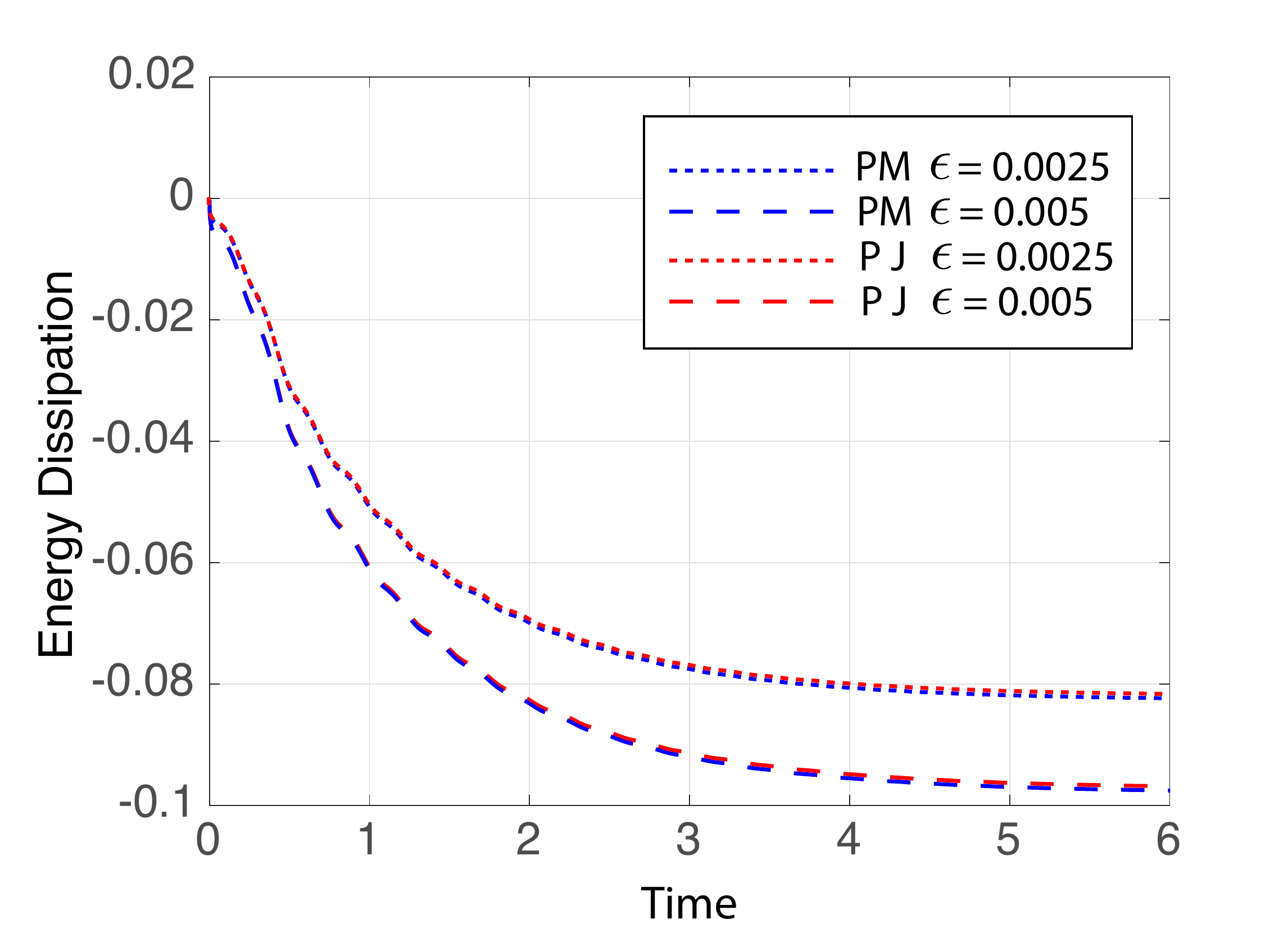}
               \caption{Time evolution of the energy dissipation $E^{n}-E^{0}$ of the binary fluid system in \S \ref{sec-RT}. The blue (red) dotted lines denotes the solution from Primitive (Projection) method with different values of $\epsilon$.}\label{RT-En}
\end{figure}
\newpage
~\\
\newpage
\begin{appendix}
\section{Finite Difference Discretization on a Staggered Grid}
\subsection{Basic Definitions and Properties}\label{app-A1}
Here we use the notations and results for cell-centered functions from \cite{SteveMultigrid-2010, SteveMultigrid2013, SteveMultigrid-2009}. The reader is directed there for complete details. We begin with definitions of grid functions and difference operators needed for our discretization of a two-dimensional staggered grid. Throughout this appendix, we use the following symbols to denote the cell-centered, edge-centered and vertex-centered functions, such that
\begin{align}
{\rm cell~centered~functions}:&~\phi, \psi,\zeta \in \mathcal{C}_{m_{1}\times m_{2}}\cup \mathcal{C}_{\overline{m}_{1}\times m_{2}}\cup \mathcal{C}_{m_{1}\times \overline{m}_{2}}\cup \mathcal{C}_{\overline{m}_{1}\times \overline{m}_{2}},\notag\\
{\rm east~west~edge~centered~functions}:&~u,\gamma \in \mathcal {E}^{ew}_{m_{1}\times m_{2}}\cup\mathcal {E}^{ew}_{m_{1}\times \overline{m}_{2}}\cup\mathcal {E}^{ew}_{\overline{m}_{1}\times m_{2}}\cup\mathcal {E}^{ew}_{\overline{m}_{1}\times \overline{m}_{2}}\notag\\
{\rm north~south~edge~centered~functions}:&~v,\omega \in \mathcal {E}^{ns}_{m_{1}\times m_{2}}\cup\mathcal {E}^{ns}_{m_{1}\times \overline{m}_{2}}\cup\mathcal {E}^{ns}_{\overline{m}_{1}\times m_{2}}\cup\mathcal {E}^{ns}_{\overline{m}_{1}\times \overline{m}_{2}}\notag\\
{\rm vertex~centered~functions}:&~f,g \in \mathcal {V}_{m_{1}\times m_{2}}.\notag
\end{align}
Here we use the function spaces with over-lined subscript $\overline{m}_{1}$ and $\overline{m}_{2}$ to denote the space that include the ghost points in the $x$ and $y$ direction respectively. In component form, we define 
\begin{align}
\phi_{i,j} &:= \phi(x_i,y_j),~~~~u_{i+\frac{1}{2},j}:=u(x_{i+\frac{1}{2}}, y_{j}),\notag\\
v_{i,j+\frac{1}{2}}&:=v(x_{i}, y_{j+\frac{1}{2}}),~~~~f_{i+\frac{1}{2},j+\frac{1}{2}}:=f (x_{i+\frac{1}{2}}, y_{j+\frac{1}{2}}),\notag
\end{align}
where $x_i = (i-\frac{1}{2} ) \cdot h$, $y_j = (j-\frac{1}{2} ) \cdot h$, and $i$ and $j$ can take on integer values.
\subsection{Average and Difference Operators}\label{app-A2}
We define the edge-to-center average and difference operators $a_{x},~d_x : \mathcal {E}^{ew}_{m_{1}\times m_{2}} \rightarrow \mathcal{C}_{m_{1}\times m_{2}} $ and $a_{y},~d_y : \mathcal {E}^{ns}_{m_{1}\times m_{2}} \rightarrow \mathcal{C}_{m_{1}\times m_{2}} $ component-wise via
\begin{align}
a_{x}u_{i,j}&=\cfrac{1}{2}\bigg(u_{i+\frac{1}{2},j}+u_{i-\frac{1}{2},j}\bigg),~~~~d_{x}u_{i,j}=\cfrac{1}{h}\bigg(u_{i+\frac{1}{2},j}-u_{i-\frac{1}{2},j}\bigg),\\
a_{y}v_{i,j}&=\cfrac{1}{2}\bigg(v_{i,j+\frac{1}{2}}+v_{i,j-\frac{1}{2}}\bigg),~~~~~d_{y}v_{i,j}=\cfrac{1}{h}\bigg(v_{i,j+\frac{1}{2}}-v_{i,j-\frac{1}{2}}\bigg),
\end{align}
for $i=1,\cdots,m_{1}$ and $j=1,\cdots ,m_{2}$.\\
The center-to-edge average and difference operators, $A_x$, $D_x$ : $\mathcal {C}_{\overline {m}_{1} \times m_{2}} \rightarrow \mathcal {E}^{ew}_{m_{1}\times m_{2}}$ and $A_{y}$, $D_{y}$ : $\mathcal{C}_{m_{1}\times \overline{m}_{2}}\rightarrow \mathcal{E}^{ns}_{m_{1}\times m_{2}}$ are defined component-wise as
\begin{align}
A_{x}\phi_{i+\frac{1}{2},j} = \cfrac{1}{2}(\phi_{i+1,j}+\phi_{i,j}),~~~~D_{x}\phi_{i+\frac{1}{2},j}=\cfrac{1}{h}(\phi_{i+1,j}-\phi_{i,j}),\\
A_{y}\phi_{i,j+\frac{1}{2}} = \cfrac{1}{2}(\phi_{i,j+1}+\phi_{i,j}), ~~~~D_{y}\phi_{i,j+\frac{1}{2}}=\cfrac{1}{h}(\phi_{i,j+1}-\phi_{i,j}),
\end{align}
for $i=0,\cdots,m_{1}$ and $j=0,\cdots ,m_{2}$.\\
The center-to-vertex average operator $\mathcal{A}$ : $\mathcal {C}_{\overline{m}_{1} \times \overline{m}_{2}} \rightarrow \mathcal {V}_{m_{1}\times m_{2}}$ is defined as
\begin{align}
\mathcal{A} \phi_{i+\frac{1}{2},j+\frac{1}{2}} = \cfrac{1}{4}(\phi_{i+1,j+1}+\phi_{i,j+1}+\phi_{i+1,j}+\phi_{i,j}),
\end{align}
for $i=0,\cdots,m_{1}$ and $j=0,\cdots ,m_{2}$.\\
The edge-to-vertex average and difference operators, $\mathcal{A}_x$, $\mathcal{D}_x$ : $\mathcal {E}^{ns}_{\overline{m}_{1} \times m_{2}} \rightarrow \mathcal {V}_{m_{1}\times m_{2}}$, and $\mathcal{A}_{y}$, $\mathcal{D}_{y}$ : $\mathcal{E}^{ew}_{m_{1}\times \overline{m}_{2}}\rightarrow \mathcal{V}_{m_{1}\times m_{2}}$ are defined as
\begin{align}
\mathcal{A}_{x}v_{i+\frac{1}{2},j+\frac{1}{2}} &= \cfrac{1}{2}(v_{i+1,j+\frac{1}{2}}+v_{i,j+\frac{1}{2}}),~~~~\mathcal{D}_{x}v_{i+\frac{1}{2},j+\frac{1}{2}} = \cfrac{1}{h}(v_{i+1,j+\frac{1}{2}}-v_{i,j+\frac{1}{2}}),\\
\mathcal{A}_{y}u_{i+\frac{1}{2},j+\frac{1}{2}} &= \cfrac{1}{2}(u_{i+\frac{1}{2},j+1}+u_{i+\frac{1}{2},j}),~~~\mathcal{D}_{y}u_{i+\frac{1}{2},j+\frac{1}{2}} = \cfrac{1}{h}(u_{i+\frac{1}{2},j+1}-u_{i+\frac{1}{2},j}),
\end{align}
for $i=0,\cdots,m_{1}$ and $j=0,\cdots ,m_{2}$. The vertex-to-edge average and difference operators, $\mathfrak{A}_x$, $\mathfrak{D}_x$ : $\mathcal {V}_{m_{1}\times m_{2}} \rightarrow \mathcal {E}^{ns}_{m_{1} \times m_{2}}$, and $\mathfrak{A}_{y}$, $\mathfrak{D}_{y}$ : $\mathcal{V}_{m_{1}\times m_{2}}\rightarrow \mathcal{E}^{ew}_{m_{1}\times m_{2}}$ are defined as 
\begin{align}
\mathfrak{A}_{x}f_{i,j+\frac{1}{2}} = \cfrac{1}{2}(f_{i+\frac{1}{2},j+\frac{1}{2}}+f_{i-\frac{1}{2},j+\frac{1}{2}}),~~~~\mathfrak{D}_{x}f_{i,j+\frac{1}{2}} = \cfrac{1}{h}(f_{i+\frac{1}{2},j+\frac{1}{2}}-f_{i-\frac{1}{2},j+\frac{1}{2}}),\\
\mathfrak{A}_{y}g_{i+\frac{1}{2},j} = \cfrac{1}{2}(g_{i+\frac{1}{2},j+\frac{1}{2}}+g_{i+\frac{1}{2},j-\frac{1}{2}}),~~~~\mathfrak{D}_{y}g_{i+\frac{1}{2},j} = \cfrac{1}{h}(g_{i+\frac{1}{2},j+\frac{1}{2}}-g_{i+\frac{1}{2},j-\frac{1}{2}}),
\end{align}
for $i=0,\cdots,m_{1}$ and $j=1,\cdots ,m_{2}$.
\subsection{Weighted Inner-Products}\label{app-inner-product}\label{app-A3}
Based on the above definitions, we define the following 2D weighted grid inner-products:
\begin{align}
(\phi ,\psi) _{2}&= \sum^{m_{1}}_{i=1}\sum^{m_{2}}_{j=1}\phi_{i,j}\psi_{i,j},\\
[u,\gamma]_{ew}&= \big( a_{x}(u\gamma),1\big)_{2},~[v,\omega]_{ns} = \big( a_{y}(v \omega),1\big)_{2},~\langle f,g\rangle_{vc} = \big(\mathcal{A}(fg),1\big)_{2},\label{winner-vv}
\end{align}
where $\phi \in  \mathcal {C}_{m_{1}\times m_{2}}$, $u,\gamma\in \mathcal {E}^{ew}_{m_{1}\times m_{2}}$, $v,\omega\in \mathcal {E}^{ns}_{m_{1}\times m_{2}}$, and $f,g\in \mathcal {V}_{m_{1}\times m_{2}}$. We also define the following combined 2D weighted grid inner-products:
\begin{align}
[\phi~u,\gamma]_{ew}&= \big(\phi, a_{x}( u \gamma)\big)_{2},~[\phi~v,\omega]_{ns} = \big(\phi, a_{y}( v \omega)\big)_{2},~\langle \phi~f,g\rangle_{vc} = \big(\phi,\mathcal{A}(fg)\big)_{2}.\label{winner-cvv}
\end{align}
We also define the one-dimensional inner-products for the edge-centered functions, or cell-centered functions, or the multiple combination of the edge-centered and cell-centered functions. Here for simplicity, we only introduce the one-dimensional inner-product for the edge-centered functions, the others combinations can be defined analogously:
\begin{align}
&\bigg( u_{i+\frac{1}{2},*}\big\vert\ \gamma_{i+\frac{1}{2},*} \bigg) = \sum_{j=1}^{m_{2}} u_{i+\frac{1}{2},j}\gamma_{i+\frac{1}{2},j},\notag\\
&\bigg( v_{*,j+\frac{1}{2}}\big\vert\ \omega_{*,j+\frac{1}{2}} \bigg) = \sum_{i=1}^{m_{1}} v_{i,j+\frac{1}{2}}\omega_{i,j+\frac{1}{2}}\notag.
\end{align}
Here the first is defined for $u, \gamma \in \mathcal{E}^{ew}_{m_{1}\times m_{2}}$, and the second for $v, \omega \in \mathcal {E}^{ns}_{m_{1}\times m_{2}}$. $*$ indicates the sum of the functions in the direction along which the one-dimensional inner-product acts. Note that, throughout this section, all the boundary terms that originated from the summation-by-parts can be eliminated by using the homogeneous Neumann conditions for the cell-centered variables, and the no-slip boundary conditions for the edge centered variables. The results are also valid for the periodic boundary conditions for the cell-centered or edge-centered variables case. For the above definitions, we obtain the following results:
\begin{prop}(Summation-by-parts) if $\phi\in \mathcal{C}_{\overline{m}_{1}\times m_{2}}$, $u\in \mathcal{E}^{ew}_{m_{1}\times m_{2}}$ and $v \in \mathcal {E}^{ns}_{m_{1}\times m_{2}}$ then
\begin{align}
 h^2[D_{x}\phi , u]_{ew} =& -h^2(\phi , d_{x}u)_{2}- h\big( A_{x}\phi_{\frac{1}{2},*}, u_{\frac{1}{2},*}\big)_{1}\notag\\
 &+ h\big(A_{x}\phi_{m_{1}+\frac{1}{2},*}, u_{m_{1}+\frac{1}{2},*}\big)_{1},\label{sum-bp-2ew-2c}\\
h^2[D_{y}\phi, v]_{ns} =& - h^2(\phi ,d_{y}v)_{2}- h\big( A_{y}\phi_{*,\frac{1}{2}}, v_{*,\frac{1}{2}}\big)_{1}\notag\\
&+ h\big(A_{y}\phi_{*,m_{2}+\frac{1}{2}}, v_{*,m_{2}+\frac{1}{2}}\big)_{1}.\label{sum-bp-2ns-2c}
\end{align}
\end{prop}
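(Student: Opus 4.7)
The plan is to prove both identities by expanding the inner products via their component-wise definitions and then applying discrete Abel summation (index shifting) to move the difference operator from $\phi$ onto $u$ (resp.\ $v$). Since the two statements are structurally identical up to swapping the roles of $x$ and $y$, I will only sketch the argument for the first one; the second follows by the same reasoning with the indices interchanged.

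First I would unfold the left-hand side using the definitions in \ref{app-inner-product} and \ref{app-A2}:
\begin{align*}
h^{2}[D_{x}\phi,u]_{ew}
&= h^{2}\big(a_{x}(D_{x}\phi\cdot u),1\big)_{2}
 = h^{2}\sum_{i=1}^{m_{1}}\sum_{j=1}^{m_{2}} a_{x}\big((D_{x}\phi)\, u\big)_{i,j}\\
&= \frac{h^{2}}{2}\sum_{i=1}^{m_{1}}\sum_{j=1}^{m_{2}}
\Big((D_{x}\phi)_{i+\frac{1}{2},j}\,u_{i+\frac{1}{2},j}
+(D_{x}\phi)_{i-\frac{1}{2},j}\,u_{i-\frac{1}{2},j}\Big).
\end{align*}
For each fixed $j$, I would split the double sum along $i$ and reindex the second piece via $i\mapsto i+1$, combining the two sums into a single telescoping expression that runs over the interior edges $i=\tfrac{1}{2},\ldots,m_{1}+\tfrac{1}{2}$ with weight $h\,u_{i+\frac{1}{2},j}\cdot(\phi_{i+1,j}-\phi_{i,j})$. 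The boundary contributions at $i=\tfrac{1}{2}$ and $i=m_{1}+\tfrac{1}{2}$ are precisely what get left over after the telescoping; they combine with the half-weights produced by $a_{x}$ to give the edge values $A_{x}\phi_{\frac{1}{2},j}\,u_{\frac{1}{2},j}$ and $A_{x}\phi_{m_{1}+\frac{1}{2},j}\,u_{m_{1}+\frac{1}{2},j}$.

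Next I would perform the analogous reindexing on the right-hand side target $-h^{2}(\phi,d_{x}u)_{2}$. Writing $d_{x}u_{i,j}=(u_{i+\frac{1}{2},j}-u_{i-\frac{1}{2},j})/h$ and applying Abel summation in $i$, the discrete derivative is transferred to $\phi$, producing exactly the interior telescoping sum obtained in the previous step, together with boundary terms at $i=1$ and $i=m_{1}$ which line up — after using the center-to-edge average $A_{x}$ — with the two boundary terms on the right-hand side of \eqref{sum-bp-2ew-2c}. Matching the two expressions term by term (interior plus boundary) and then summing over $j$ from $1$ to $m_{2}$, which assembles the one-dimensional boundary inner products $\bigl(A_{x}\phi_{\frac{1}{2},*},u_{\frac{1}{2},*}\bigr)_{1}$ and $\bigl(A_{x}\phi_{m_{1}+\frac{1}{2},*},u_{m_{1}+\frac{1}{2},*}\bigr)_{1}$, completes the proof of the first identity.

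The main obstacle is purely bookkeeping: keeping the half-integer indices straight, correctly tracking the factor of $\tfrac{1}{2}$ that $a_{x}$ introduces (which merges with the telescoping to produce the averaging operator $A_{x}$ on the boundary), and verifying that the residual terms at $i=\tfrac{1}{2}$ and $i=m_{1}+\tfrac{1}{2}$ take the precise form $h\,A_{x}\phi\cdot u$ rather than some other combination. No functional-analytic input is required; the whole argument is a careful discrete analogue of integration by parts. The second identity is obtained verbatim by interchanging the roles of the $x$ and $y$ indices and using $a_{y}$, $A_{y}$, $d_{y}$, $D_{y}$ in place of their $x$-counterparts.
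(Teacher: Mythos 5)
Your argument is correct: I verified the index bookkeeping, and the half-weights from $a_{x}$ at the two boundary edges do combine with the leftover Abel-summation terms $\mp h\,\phi_{1,j}u_{\frac{1}{2},j}$ and $\pm h\,\phi_{m_{1},j}u_{m_{1}+\frac{1}{2},j}$ to produce exactly $\mp h\,A_{x}\phi_{\frac{1}{2},j}u_{\frac{1}{2},j}$ and $\pm h\,A_{x}\phi_{m_{1}+\frac{1}{2},j}u_{m_{1}+\frac{1}{2},j}$. The paper itself states this proposition without proof, deferring to the cited references on staggered-grid finite differences, so there is no in-paper argument to compare against; your component-wise expansion plus discrete Abel summation is the standard and essentially only natural route, and the $y$-direction identity indeed follows verbatim by symmetry.
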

\begin{prop}
Let $\phi \in \mathcal{C}_{\overline{m}_{1}\times \overline{m}_{2}}$, $u,\gamma \in \mathcal{E}^{ew}_{\overline{m}_{1}\times m_{2}}$ and $v,\omega \in \mathcal{E}^{ns}_{m_{1}\times \overline{m}_{2}}$. Then
\begin{align}
&h^2\big[ A_{x}(\phi~a_{x}u~d_{x}\gamma),\gamma\big]_{ew}+h^2\big[\cfrac{1}{2}\gamma D_{x}(\phi~a_{x} u),\gamma\big]_{ew}\notag\\
&=\cfrac{h}{4}\big(\phi_{m_{1}+1,*}~a_{x}u_{m_{1}+1,*}~\gamma_{m_{1}+\frac{3}{2},*}~,~\gamma_{m_{1}+\frac{1}{2},*}\big)_{1}\notag\\
&-\cfrac{h}{4}\big(\phi_{0,*}~a_{x}u_{0,*}\gamma_{-\frac{1}{2},*}~,~\gamma_{\frac{1}{2},*}\big)_{1},\label{sum-bp-vel-x1}\\
&h^2\big[ A_{y}\big(\phi~a_{y}v~d_{y}\omega\big),\omega\big]_{ns}+h^2\big[\cfrac{1}{2}\omega D_{y} (\phi~a_{y} v),\omega\big]_{ns}\notag\\
&=\cfrac{h}{4}\big(\phi_{*,m_{2}+1}~a_{y}v_{*,m_{2}+1}~\omega_{*,m_{2}+\frac{3}{2}}~,~\omega_{*,m_{2}+\frac{1}{2}}\big)_{1}\notag\\
&-\cfrac{h}{4}\big(\phi_{*,0}~a_{y}v_{*,0}~\omega_{*,-\frac{1}{2}}~,~\omega_{*,\frac{1}{2}}\big)_{2}.\label{sum-bp-vel-y1}
\end{align}
\end{prop}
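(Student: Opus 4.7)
The proof plan focuses on identity (\ref{sum-bp-vel-x1}); the $y$-version (\ref{sum-bp-vel-y1}) follows by the symmetric argument with the roles of $x$ and $y$ interchanged. The guiding principle is the continuous chain rule $\phi u\,\gamma\,\partial_x\gamma + \tfrac{1}{2}\gamma^2\partial_x(\phi u) = \tfrac{1}{2}\partial_x(\phi u\gamma^2)$, so after a suitable discrete product rule the LHS should collapse into a telescoping sum whose only survivors lie at the boundary.

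First I will set $\Phi_{i,j}:=\phi_{i,j}\,a_x u_{i,j}$ and expand both bracketed inner products on the LHS using the definitions in \ref{app-A2} and \ref{app-A3}. This produces a double sum over cells $(i,j)$ in which each contribution is evaluated at an adjacent edge $i\pm\tfrac{1}{2}$. The decisive step is a pointwise edge identity: after substituting the formulas for $A_x$, $d_x$, and $D_x$, the sum of the two LHS contributions at a single edge $i+\tfrac{1}{2}$ reduces, by cancellation of the cross-terms containing $\Phi_{i+1}\gamma_{i+1/2}^2$ and $\Phi_i\gamma_{i+1/2}^2$, to
\begin{equation*}
\tfrac{1}{2h}\bigl[\Phi_{i+1,j}\,\gamma_{i+3/2,j}\,\gamma_{i+1/2,j} - \Phi_{i,j}\,\gamma_{i-1/2,j}\,\gamma_{i+1/2,j}\bigr] = \tfrac{1}{2h}\bigl(S_{i+1,j}-S_{i,j}\bigr),
\end{equation*}
where $S_{k,j}:=\Phi_{k,j}\,\gamma_{k-1/2,j}\,\gamma_{k+1/2,j}$ is a cell-centered quantity. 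This is the discrete analogue of the chain rule above, and isolating it is the heart of the proof.

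Once this pointwise identity is in hand, the remaining work is bookkeeping. Summing over $i=1,\ldots,m_1$ and $j=1,\ldots,m_2$ with the weights inherited from the composition $a_x\circ A_x$ (interior edges carry weight $1$, the two extreme edges $k=\tfrac{1}{2}$ and $k=m_1+\tfrac{1}{2}$ carry weight $\tfrac{1}{2}$) produces a one-dimensional telescoping sum of the differences $S_{i+1,j}-S_{i,j}$. Collapsing this telescoping sum and re-expressing the surviving extreme-cell contributions through the one-dimensional inner product $(\cdot,\cdot)_1$ defined in \ref{app-inner-product} yields the boundary expressions appearing on the right-hand side of (\ref{sum-bp-vel-x1}).

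The principal obstacle is not conceptual but combinatorial: several averaging and differencing operators are composed, so the half-integer index shifts and the fractional weights that $a_x$ places on boundary edges must be tracked with care. Once the discrete product-rule identity that isolates $S_{i+1}-S_i$ is recognized, the telescoping collapse is routine, and (\ref{sum-bp-vel-y1}) is obtained by a verbatim repetition of the argument with $v,\omega,a_y,A_y,D_y,d_y$ in place of their $x$-counterparts.
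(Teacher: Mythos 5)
The paper states this proposition without proof, so there is no authorial argument to compare against; your plan is the natural one and its central step checks out. Writing $\Phi_{i,j}=\phi_{i,j}\,a_xu_{i,j}$, the pointwise edge identity you isolate is indeed correct:
\begin{equation*}
\Bigl[A_x(\Phi\,d_x\gamma)+\tfrac{1}{2}\gamma\,D_x\Phi\Bigr]_{i+\frac{1}{2},j}\,\gamma_{i+\frac{1}{2},j}=\tfrac{1}{2h}\bigl(S_{i+1,j}-S_{i,j}\bigr),\qquad S_{k,j}:=\Phi_{k,j}\,\gamma_{k-\frac{1}{2},j}\,\gamma_{k+\frac{1}{2},j},
\end{equation*}
since the cross terms $\pm\Phi_{i+1,j}\gamma_{i+\frac12,j}$ and $\pm\Phi_{i,j}\gamma_{i+\frac12,j}$ cancel exactly as you describe. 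This discrete product rule is the heart of the matter, and the two-dimensional and $y$-direction extensions are routine.

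The one place your write-up does not close is the final bookkeeping, and the obstruction is precisely the fractional boundary weights you yourself identify. Because $[f,\gamma]_{ew}=(a_x(f\gamma),1)_2$ assigns weight $1$ to interior edges but weight $\tfrac12$ to the edges $\tfrac12$ and $m_1+\tfrac12$, the weighted sum of the differences $\tfrac{1}{2h}(S_{i+1,j}-S_{i,j})$ does \emph{not} telescope to $\tfrac{1}{2h}(S_{m_1+1,j}-S_{0,j})$; it collapses to $\tfrac{1}{4h}(S_{m_1+1,j}+S_{m_1,j}-S_{1,j}-S_{0,j})$. Multiplying by $h^2$ and summing in $j$, the left-hand side therefore equals the printed right-hand side \emph{plus} the two additional terms $\tfrac{h}{4}\sum_j\Phi_{m_1,j}\gamma_{m_1-\frac12,j}\gamma_{m_1+\frac12,j}-\tfrac{h}{4}\sum_j\Phi_{1,j}\gamma_{\frac12,j}\gamma_{\frac32,j}$, which the proposition as stated omits. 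This is not a failure of your method: like the two terms the paper does record, the extra ones contain the boundary-edge values $\gamma_{\frac12,j}$ or $\gamma_{m_1+\frac12,j}$ and hence vanish under the no-slip or periodic conditions under which the proposition is actually invoked in the energy-stability proofs. But your assertion that the telescoping ``yields the boundary expressions appearing on the right-hand side'' is not literally compatible with the half-weights you correctly assigned; a complete proof must either carry these two extra boundary terms explicitly or prove the identity under the boundary conditions that annihilate all four. The same remark applies verbatim to the $y$-direction identity.
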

\begin{prop}
Let $\phi \in \mathcal{C}_{\overline{m}_{1}\times \overline{m}_{2}}$, $u \in \mathcal{E}^{ew}_{\overline{m}_{1}\times\overline{ m}_{2}}$ and $v \in \mathcal{E}^{ns}_{\overline{m}_{1}\times \overline{m}_{2}}$. Then
\begin{align}
&h^2\big[\mathfrak{A}_{y}\big(\mathcal{A}\phi~\mathcal{A}_{x}v\mathcal{D}_{y}u\big),u\big]_{ew}+h^2\big[\cfrac{1}{2}u\mathfrak{D}_{y} (\mathcal{A}\phi\mathcal{A}_{x}v),u\big]_{ew}\\
=&-\cfrac{h}{2}\big(\mathcal{A}\phi_{*-\frac{1}{2},\frac{1}{2}} u_{*-\frac{1}{2},0}u_{*-\frac{1}{2},1}~,~\mathcal{A}_{x}v_{*-\frac{1}{2},\frac{1}{2}}\big)_{1}\notag\\
&+\cfrac{h}{2} \big(\mathcal{A}\phi_{*+\frac{1}{2},m_{2}+\frac{1}{2}}u_{*+\frac{1}{2},m_{2}}u_{*+\frac{1}{2},m_{2}+1}~,~\mathcal{A}_{x}v_{*+\frac{1}{2},m_{2}+\frac{1}{2}}\big)_{1},\label{sum-bp-vel-x2}\\
&h^2\big[\mathfrak{A}_{x}\big(\mathcal{A}\phi~\mathcal{A}_{y}u\mathcal{D}_{x}v\big),v \big]_{ns}+h^2\big[\cfrac{1}{2}v\mathfrak{D}_{x} (\mathcal{A}\phi \mathcal{A}_{y}u),v\big]_{ns}\notag\\
=&-\cfrac{h}{2}\big(\mathcal{A}\phi_{\frac{1}{2},*-\frac{1}{2}} v_{0,*-\frac{1}{2}} v_{1,*-\frac{1}{2}}~,~ \mathcal{A}_{x}u_{\frac{1}{2},*-\frac{1}{2}}\big)_{1}\notag\\
&+\cfrac{h}{2} \big(\mathcal{A}\phi_{m_{1}+\frac{1}{2},*+\frac{1}{2}}v_{m_{1},*+\frac{1}{2}} v_{m_{1}+1,*+\frac{1}{2}}~,~\mathcal{A}_{x}u_{m_{1}+\frac{1}{2},*+\frac{1}{2}}\big)_{1}.\label{sum-bp-vel-y2}
\end{align}
\end{prop}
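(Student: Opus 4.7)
The plan is to prove (\ref{sum-bp-vel-x2}) by establishing a pointwise discrete product rule in the $y$-direction and then summing, with (\ref{sum-bp-vel-y2}) following by an exactly parallel argument after swapping $x\leftrightarrow y$ and $u\leftrightarrow v$. First, set the vertex-centered grouping $F:=\mathcal{A}\phi\,\mathcal{A}_{x}v$; the left-hand side of (\ref{sum-bp-vel-x2}) then has the schematic form $h^{2}\bigl[\mathfrak{A}_{y}(F\,\mathcal{D}_{y}u)+\tfrac{1}{2}u\,\mathfrak{D}_{y}F,\,u\bigr]_{ew}$. The key pointwise identity I would verify by direct expansion of the averages and differences is, at fixed edge-column index $i+\tfrac{1}{2}$ and each $j$,
\[
u_{i+\frac12,j}\,\mathfrak{A}_{y}\bigl(F\,\mathcal{D}_{y}u\bigr)_{i+\frac12,j}+\tfrac{1}{2}\,u_{i+\frac12,j}^{2}\,\mathfrak{D}_{y}F_{i+\frac12,j}=\tfrac{1}{2h}\bigl[F_{i+\frac12,j+\frac12}\,u_{i+\frac12,j}\,u_{i+\frac12,j+1}-F_{i+\frac12,j-\frac12}\,u_{i+\frac12,j-1}\,u_{i+\frac12,j}\bigr].
\]
This is the discrete analogue of the continuous identity $u(Fu_{y})+\tfrac{1}{2}u^{2}F_{y}=\tfrac{1}{2}(Fu^{2})_{y}$ recast in centered-flux form, and it is precisely the mechanism that lets the advective energy telescope in $j$.

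Summing the pointwise identity over $1\le j\le m_{2}$ collapses its right-hand side to a single boundary difference, leaving only the contributions at $j=\tfrac{1}{2}$ and $j=m_{2}+\tfrac{1}{2}$. I would then apply the outer $[\,\cdot\,,\,\cdot\,]_{ew}$ inner product, which by (\ref{winner-cvv}) is an $a_{x}$-weighted sum in the $x$-direction over cell columns $i=1,\ldots,m_{1}$. Combining the overall $h^{2}$ factor with the $\tfrac{1}{2h}$ produced by the $y$-telescope, together with the $\tfrac{1}{2}$ weights from $a_{x}$, yields exactly the $\tfrac{h}{2}$ prefactors and the half-integer $x$-indices $*\pm\tfrac{1}{2}$ in the one-dimensional inner products $(\,\cdot\,|\,\cdot\,)_{1}$ on the right-hand side of (\ref{sum-bp-vel-x2}). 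Substituting back $F=\mathcal{A}\phi\,\mathcal{A}_{x}v$ then splits the factor $F_{*\pm\frac12,\,*\pm\frac12}$ into the product $\mathcal{A}\phi_{*\pm\frac12,\,*\pm\frac12}\cdot\mathcal{A}_{x}v_{*\pm\frac12,\,*\pm\frac12}$, which is precisely the factorization appearing in the stated right-hand side.

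The main obstacle is combinatorial rather than conceptual: six different averaging/difference operators ($\mathcal{A},\,\mathcal{A}_{x},\,\mathcal{A}_{y},\,\mathcal{D}_{y},\,\mathfrak{A}_{y},\,\mathfrak{D}_{y}$) act across three distinct staggered locations (cell, vertex, east-west edge), so one must check that the pointwise identity really holds as written and that the half-weights produced by $a_{x}$ at the columns $i=1$ and $i=m_{1}$ mesh correctly with the $*\pm\tfrac{1}{2}$ indexing used in the one-dimensional inner products. I would carry out the derivation keeping $F$ as a generic vertex-centered function, substituting $F=\mathcal{A}\phi\,\mathcal{A}_{x}v$ only at the very last step to keep the index bookkeeping as clean as possible. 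The companion identity (\ref{sum-bp-vel-y2}) then follows by an entirely analogous argument in which the roles of the two spatial directions and of the two velocity components are interchanged.
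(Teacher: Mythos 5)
Your argument is correct: the paper states this proposition without proof, and the pointwise identity you propose, $u_j\,\mathfrak{A}_y(F\mathcal{D}_y u)_j+\tfrac12 u_j^2\,\mathfrak{D}_y F_j=\tfrac{1}{2h}\bigl(F_{j+\frac12}u_j u_{j+1}-F_{j-\frac12}u_{j-1}u_j\bigr)$ with $F=\mathcal{A}\phi\,\mathcal{A}_x v$, checks out by direct expansion of the operator definitions and telescopes in $j$ exactly as you describe, leaving only the $j=\tfrac12$ and $j=m_2+\tfrac12$ boundary fluxes that appear on the right-hand side (and which vanish under the no-slip or periodic conditions actually used in the schemes). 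This is precisely the discrete skew-symmetrization mechanism the paper relies on for Propositions of this type, so your route is the intended one; the only caveat is the one you already flag, namely reconciling the $a_x$ half-weights at the extreme columns with the paper's somewhat loose $*\pm\tfrac12$ indexing in the one-dimensional inner products.
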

For the sake of simplicity, we omit all the boundary terms of that originated in the following Propositions.
\begin{prop}
Let $\phi \in \mathcal{C}_{\overline{m}_{1}\times \overline{m}_{2}}$, $u,\gamma \in \mathcal{E}^{ew}_{\overline{m}_{1}\times \overline{m}_{2}}$ and $v,\omega \in \mathcal{E}^{ns}_{\overline{m}_{1}\times \overline{m}_{2}}$. Then
\begin{align}
[D_{x}(\phi ~d_{x}u), \gamma]_{ew}= &-(\phi~d_{x}u, d_{x}\gamma)_{2},\label{sum-lap-3ew-1c2v}\\
[D_{y}(\phi ~d_{y}v), \omega]_{ns}= &-(\phi~d_{y}v, d_{y}\omega)_{2}.\label{sum-lap-3ns-1c2v}
\end{align}
\end{prop}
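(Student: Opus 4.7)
The plan is to recognize both identities as direct consequences of the first-order summation-by-parts formulas (\ref{sum-bp-2ew-2c})–(\ref{sum-bp-2ns-2c}) that have already been proved earlier in Appendix A, applied once with the inner combination $\phi\,d_{x}u$ (respectively $\phi\,d_{y}v$) playing the role of the cell-centered test function. The note just above the statement tells us that boundary terms arising from summation-by-parts are to be dropped, consistent with either homogeneous Neumann data for the cell-centered variables, no-slip data for the edge-centered variables, or periodic boundary conditions, so I will freely invoke this convention.

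The first step is to set up the auxiliary cell-centered functions. Define
\begin{equation*}
\psi := \phi\, d_{x}u, \qquad \chi := \phi\, d_{y}v.
\end{equation*}
Because $d_{x}:\mathcal{E}^{ew}_{\overline{m}_{1}\times \overline{m}_{2}} \to \mathcal{C}_{\overline{m}_{1}\times \overline{m}_{2}}$ (and analogously for $d_{y}$), and since pointwise multiplication by $\phi\in\mathcal{C}_{\overline{m}_{1}\times \overline{m}_{2}}$ preserves the cell-centered space, we have $\psi,\chi \in \mathcal{C}_{\overline{m}_{1}\times \overline{m}_{2}}$. Thus $D_{x}\psi$ and $D_{y}\chi$ are well-defined on the appropriate east-west and north-south edges, and the inner products $[D_{x}\psi,\gamma]_{ew}$ and $[D_{y}\chi,\omega]_{ns}$ are unambiguous.

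Next, I apply the existing identity (\ref{sum-bp-2ew-2c}) with the substitutions $\phi\mapsto \psi$ and $u\mapsto \gamma$ to get
\begin{equation*}
h^{2}[D_{x}\psi,\gamma]_{ew} = -h^{2}(\psi, d_{x}\gamma)_{2} - h\bigl(A_{x}\psi_{\tfrac{1}{2},*}, \gamma_{\tfrac{1}{2},*}\bigr)_{1} + h\bigl(A_{x}\psi_{m_{1}+\tfrac{1}{2},*}, \gamma_{m_{1}+\tfrac{1}{2},*}\bigr)_{1}.
\end{equation*}
By the standing convention that boundary contributions are dropped (they vanish under the assumed homogeneous conditions or are eliminated in the periodic case), the two boundary terms drop out. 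Cancelling the common factor $h^{2}$ and substituting back $\psi = \phi\,d_{x}u$ gives (\ref{sum-lap-3ew-1c2v}). The $y$-identity (\ref{sum-lap-3ns-1c2v}) follows by the identical argument with $x \leftrightarrow y$, using (\ref{sum-bp-2ns-2c}) in place of (\ref{sum-bp-2ew-2c}) and $\chi$ in place of $\psi$.

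There is essentially no technical obstacle here; the result is a bookkeeping corollary of the basic edge/cell summation-by-parts lemma. The only points I would be careful to verify are (i) that $\phi\,d_{x}u$ really does live in the same cell-centered space to which (\ref{sum-bp-2ew-2c}) applies, which is why the hypothesis takes $\phi$ and $u$ in the over-lined (ghost-inclusive) spaces, and (ii) that the boundary conditions imposed on the underlying physical variables are strong enough to kill the two boundary inner products. Both checks are immediate from the definitions and the boundary-condition discussion in $\ref{app-bc}$.
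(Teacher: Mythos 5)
Your proof is correct and follows exactly the route the paper intends: the proposition is a routine corollary of the basic edge--cell summation-by-parts identities (\ref{sum-bp-2ew-2c})--(\ref{sum-bp-2ns-2c}) applied to the cell-centered function $\phi\,d_{x}u$ (resp.\ $\phi\,d_{y}v$), with the boundary terms dropped under the stated convention; the paper omits the proof precisely because it is this bookkeeping step. Your two verification points (membership of $\phi\,d_{x}u$ in the ghost-inclusive cell-centered space and the vanishing of the boundary inner products) are the right things to check and both hold.
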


\begin{prop}
Let $\phi \in \mathcal{C}_{\overline{m}_{1}\times \overline{m}_{2}}$, $u,\gamma \in \mathcal{E}^{ew}_{\overline{m}_{1}\times \overline{m}_{2}}$ and $v,\omega \in \mathcal{E}^{ns}_{\overline{m}_{1}\times \overline{m}_{2}}$. Then
\begin{align}
\big[\mathfrak{D}_{y}(\mathcal{A}\phi \mathcal{D}_{y} u) ,\gamma\big]_{ew}=&- \langle\phi  \mathcal{D}_{y} u , \mathcal{D}_{y} \gamma\rangle_{vc},\label{sum-bp-3ew-1c2v}\\
\big[\mathfrak{D}_{x}(\mathcal{A}\phi \mathcal{D}_{x}v) ,\omega\big]_{ns}=&-\langle\phi  \mathcal{D}_{x}v , \mathcal{D}_{x} \omega\rangle_{vc}.\label{sum-bp-3ns-1c2v}
\end{align}
\end{prop}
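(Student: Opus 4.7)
The plan is to establish both identities by a direct component-wise calculation: each is a discrete integration-by-parts in a single coordinate direction ($y$ for (\ref{sum-bp-3ew-1c2v}), $x$ for (\ref{sum-bp-3ns-1c2v})), combined with a swap between the cell-centered inner product $(\cdot,\cdot)_{2}$ and the combined vertex inner product $\langle\cdot,\cdot\rangle_{vc}$ via the adjointness of the two readings of the four-point average $\mathcal{A}$. I will sketch only (\ref{sum-bp-3ew-1c2v}); the identity (\ref{sum-bp-3ns-1c2v}) follows by interchanging the roles of $x$ and $y$ (ew $\leftrightarrow$ ns, $\mathfrak{D}_{x}\leftrightarrow\mathfrak{D}_{y}$, $\mathcal{D}_{x}\leftrightarrow\mathcal{D}_{y}$) throughout the same argument.

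First I would set $H_{i+\frac{1}{2},j+\frac{1}{2}} := \mathcal{A}\phi_{i+\frac{1}{2},j+\frac{1}{2}}\,\mathcal{D}_{y}u_{i+\frac{1}{2},j+\frac{1}{2}}$, a vertex-centered grid function. Using the definition of $\mathfrak{D}_{y}:\mathcal{V}_{m_{1}\times m_{2}}\to\mathcal{E}^{ew}_{m_{1}\times m_{2}}$ and the edge inner product $[\cdot,\cdot]_{ew}$ from $\ref{app-A3}$, the left-hand side of (\ref{sum-bp-3ew-1c2v}) reduces, modulo boundary contributions discarded under the convention stated immediately before the Proposition, to the double sum
\[
[\mathfrak{D}_{y}H,\gamma]_{ew} \;=\; h^{2}\sum_{i,j}(\mathfrak{D}_{y}H)_{i+\frac{1}{2},j}\,\gamma_{i+\frac{1}{2},j}.
\]

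Next, for each fixed $i+\tfrac{1}{2}$, I would apply a one-dimensional summation-by-parts in the $j$-index. Because $\mathfrak{D}_{y}$ is a backward difference on a vertex function while $\mathcal{D}_{y}$ is a forward difference on an ew-edge function and the two half-step-shifted grids interlace, the usual telescoping argument gives, after dropping boundary terms,
\[
\sum_{j}(\mathfrak{D}_{y}H)_{i+\frac{1}{2},j}\,\gamma_{i+\frac{1}{2},j} \;=\; -\sum_{j} H_{i+\frac{1}{2},j+\frac{1}{2}}\,\mathcal{D}_{y}\gamma_{i+\frac{1}{2},j+\frac{1}{2}}.
\]
Substituting $H=\mathcal{A}\phi\,\mathcal{D}_{y}u$ and reassembling the outer sum in $i$ yields
\[
[\mathfrak{D}_{y}(\mathcal{A}\phi\mathcal{D}_{y}u),\gamma]_{ew} \;=\; -h^{2}\sum_{i,j}\mathcal{A}\phi_{i+\frac{1}{2},j+\frac{1}{2}}\,\mathcal{D}_{y}u_{i+\frac{1}{2},j+\frac{1}{2}}\,\mathcal{D}_{y}\gamma_{i+\frac{1}{2},j+\frac{1}{2}}.
\]

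The final step is to recognize this vertex sum as $-\langle\phi\,\mathcal{D}_{y}u,\mathcal{D}_{y}\gamma\rangle_{vc}$. Each vertex $(i+\tfrac{1}{2},j+\tfrac{1}{2})$ is a corner of exactly four cells, and the four-point stencil of the cell-to-vertex average $\mathcal{A}$ coincides with that of the vertex-to-cell four-point average appearing in the definition $\langle\phi f,g\rangle_{vc}=(\phi,\mathcal{A}(fg))_{2}$ from (\ref{winner-cvv}). Swapping the order of the double sum produces the adjointness identity
\[
\sum_{v}\mathcal{A}\phi_{v}\,F_{v} \;=\; \sum_{c}\phi_{c}\,\mathcal{A}(F)_{c}
\]
for any vertex-centered $F$; applied with $F=\mathcal{D}_{y}u\,\mathcal{D}_{y}\gamma$ it converts the vertex sum above into $-(\phi,\mathcal{A}(\mathcal{D}_{y}u\,\mathcal{D}_{y}\gamma))_{2}=-\langle\phi\,\mathcal{D}_{y}u,\mathcal{D}_{y}\gamma\rangle_{vc}$, completing (\ref{sum-bp-3ew-1c2v}). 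The principal obstacle is this Step~3 index swap and the accompanying reinterpretation of $\mathcal{A}$ as its own discrete adjoint; the 1D SBP of Step~2 is routine, and verifying that all boundary contributions fall within the dropped class amounts to comparing the supports of the four-point $\mathcal{A}$-stencil with the admissible index ranges laid out in $\ref{app-A1}$--$\ref{app-A2}$.
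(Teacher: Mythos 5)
Your argument is correct: the one-dimensional Abel summation in the $j$-index followed by the self-adjointness of the four-point average $\mathcal{A}$ (read center-to-vertex on one side and vertex-to-center on the other, as in (\ref{winner-cvv})) is exactly the standard summation-by-parts computation behind this identity, and the paper itself states the proposition without proof, deferring to \cite{SteveMultigrid-2010,SteveMultigrid2013,SteveMultigrid-2009}, where the same component-wise argument appears. The only point requiring care --- which you correctly flag as falling under the dropped-boundary-terms convention stated just before the proposition --- is that $[\cdot,\cdot]_{ew}$ and $\langle\cdot,\cdot\rangle_{vc}$ assign half and quarter weights to boundary edges and vertices through $a_x$ and $\mathcal{A}$, so the clean interchange of the double sums is exact only modulo those boundary contributions.
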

\begin{prop}
Let $\phi \in \mathcal{C}_{\overline{m}_{1}\times \overline{m}_{2}}$, $u\in \mathcal{E}^{ew}_{\overline{m}_{1}\times \overline{m}_{2}}$ and $v \in \mathcal{E}^{ns}_{\overline{m}_{1}\times \overline{m}_{2}}$. Then
\begin{align}
\big[D_{x}(\phi~d_{y} v),u\big]_{ew}=&-(\phi~d_{y} v ,d_{x} u)_{2},\label{sum-bp-cnsew-3c}\\
\big[D_{y}(\phi~d_{x} u),v\big]_{ns}=&-(\phi~d_{x} u , d_{y} v)_{2}.\label{sum-bp-cewns-3c}
\end{align}
\end{prop}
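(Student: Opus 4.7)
The plan is to reduce both identities to the already-established one-derivative summation-by-parts rules (\ref{sum-bp-2ew-2c}) and (\ref{sum-bp-2ns-2c}) by first observing that the relevant composite expressions live on the cell-centered grid, so the earlier propositions apply directly with no new computation.

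First I would verify the grid-collocation bookkeeping. The operator $d_y : \mathcal{E}^{ns} \to \mathcal{C}$ sends the ns-edge-centered function $v$ to a cell-centered function $d_y v$, and the pointwise product $\psi := \phi \cdot d_y v$ of two cell-centered functions is again cell-centered, i.e.\ $\psi \in \mathcal{C}_{\overline{m}_1 \times \overline{m}_2}$. Consequently $D_x \psi \in \mathcal{E}^{ew}_{m_1 \times m_2}$ is a legitimate ew-edge function, and the inner product $[D_x \psi, u]_{ew}$ on the left-hand side of (\ref{sum-bp-cnsew-3c}) is well defined. Similarly, the right-hand side pairs the two cell-centered functions $\psi = \phi\, d_y v$ and $d_x u$ via the cell-centered inner product $(\cdot,\cdot)_2$, so both sides of the claimed identity make sense.

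Next, I apply the previously-proved summation-by-parts identity (\ref{sum-bp-2ew-2c}) with its cell-centered argument taken to be the composite function $\psi = \phi \cdot d_y v$. This yields immediately
\begin{align*}
h^2 [D_x(\phi\, d_y v), u]_{ew} = -h^2 (\phi\, d_y v,\, d_x u)_2 - h\bigl(A_x \psi_{\frac{1}{2},*},\, u_{\frac{1}{2},*}\bigr)_{1} + h\bigl(A_x \psi_{m_1+\frac{1}{2},*},\, u_{m_1+\frac{1}{2},*}\bigr)_{1}.
\end{align*}
Per the remark placed just before the proposition statement, the boundary contributions are omitted in the appendix identities; in the contexts used throughout the paper they vanish either under the no-slip condition $u|_{\partial\Omega}=0$ (which forces $u_{\frac{1}{2},*} = u_{m_1+\frac{1}{2},*} = 0$) or under periodic boundary conditions in the $x$-direction. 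Dropping those terms and dividing through by $h^2$ gives exactly (\ref{sum-bp-cnsew-3c}).

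The second identity (\ref{sum-bp-cewns-3c}) follows by the symmetric argument with the roles of the two coordinate directions swapped: observe that $d_x u$ is cell-centered (since $d_x: \mathcal{E}^{ew} \to \mathcal{C}$), form the cell-centered composite $\phi\, d_x u$, and apply (\ref{sum-bp-2ns-2c}) with $\phi$ replaced by $\phi\, d_x u$, again discarding the boundary terms. The main obstacle, such as it is, is purely notational — one must carefully track that $d_y v$ and $d_x u$ land on the cell-centered grid rather than on some intermediate sub-grid, so that the composite functions are admissible arguments for the existing summation-by-parts propositions. Once this collocation is verified, the proof is a one-line invocation of the earlier results.
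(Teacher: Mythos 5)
Your proof is correct: the paper states this proposition in the appendix without giving a proof, and your reduction --- observing that $\phi\, d_y v$ and $\phi\, d_x u$ are legitimate cell-centered arguments and then invoking the one-derivative summation-by-parts identities (\ref{sum-bp-2ew-2c}) and (\ref{sum-bp-2ns-2c}) with the boundary terms discarded, as the paper announces it will do for this block of propositions --- is exactly the intended derivation. The grid-collocation check is the only substantive point, and you carry it out correctly.
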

\begin{prop}
Let $\phi,\psi,\zeta \in \mathcal{C}_{\overline{m}_{1}\times \overline{m}_{2}}$. Then
\begin{align}
\big(d_{x}(A_{x}\phi~D_{x} \psi ),\zeta\big)_2=&-\big[ \phi~D_{x} \psi, D_{x} \zeta\big]_{ew},\label{sum-bp-3xc-3c}\\
\big(d_{y}(A_{y}\phi~D_{y} \psi ),\zeta\big)_2=&-\big[ \phi~D_{y} \psi, D_{y} \zeta\big]_{ns}.\label{sum-bp-3yc-3c}
\end{align}
\end{prop}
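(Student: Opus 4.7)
The plan is to establish the $x$-direction identity (\ref{sum-bp-3xc-3c}); the $y$-direction identity (\ref{sum-bp-3yc-3c}) then follows by an entirely symmetric argument with $x$ and $y$ interchanged. The key observation is that although $\phi$ and $\psi$ are both cell-centered, the combination $A_x\phi\cdot D_x\psi$ is a product of two east-west edge-centered grid functions, and hence lies in $\mathcal{E}^{ew}$. Consequently $d_x(A_x\phi\,D_x\psi)$ is a legitimate cell-centered function and the inner product on the left side of (\ref{sum-bp-3xc-3c}) makes sense.

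First I would invoke the basic summation-by-parts identity (\ref{sum-bp-2ew-2c}) with $u := A_x\phi\,D_x\psi \in \mathcal{E}^{ew}_{m_1\times m_2}$ and with $\phi$ there replaced by $\zeta$. The boundary contributions that appear on the right of (\ref{sum-bp-2ew-2c}) carry a factor of $D_x\psi$ evaluated at the east and west boundary edges, which vanishes under the assumed homogeneous Neumann boundary condition on $\psi$. Dropping those boundary terms yields
\begin{equation*}
(d_x(A_x\phi\,D_x\psi),\zeta)_2 \;=\; -[D_x\zeta,\;A_x\phi\,D_x\psi]_{ew}.
\end{equation*}

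Next I need to recast the right-hand side in the weighted form $-[\phi\,D_x\psi,\,D_x\zeta]_{ew}$. Using the definition $[u,\gamma]_{ew} = (a_x(u\gamma),1)_2$ and comparing with the combined inner product $[\phi\,D_x\psi,\,D_x\zeta]_{ew} = (\phi,\,a_x(D_x\psi\,D_x\zeta))_2$ from (\ref{winner-cvv}), the task reduces to verifying the averaging identity
\begin{equation*}
(a_x(A_x\phi\cdot g),\,1)_2 \;=\; (\phi,\,a_x(g))_2
\end{equation*}
for any $g \in \mathcal{E}^{ew}_{\overline{m}_1\times\overline{m}_2}$, applied with $g = D_x\psi\,D_x\zeta$. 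This is a short direct computation: expanding $A_x\phi$ and $a_x$ produces a four-term sum at each cell, and after reindexing the sum in $i$ the coefficients regroup so that each $\phi_{i,j}$ multiplies $\tfrac12(g_{i+1/2,j}+g_{i-1/2,j}) = a_x(g)_{i,j}$. The index shift produces boundary leftovers involving $\phi$ at ghost cells multiplied by $g$ at boundary edges; under the homogeneous Neumann condition on $\psi$, $D_x\psi$ vanishes at those edges, so all such leftovers vanish.

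The main obstacle is the bookkeeping: careful tracking of how the center-to-edge average $A_x$ and the edge-to-center average $a_x$ shift indices, and the precise identification of all boundary contributions produced by both the summation-by-parts step and the averaging identity, so that all of them can be seen to vanish under the homogeneous boundary data assumed on $\psi$ (or, equivalently in the periodic case, cancel by periodicity). Once that is in place, the chain
\begin{equation*}
(d_x(A_x\phi\,D_x\psi),\zeta)_2 \;=\; -[D_x\zeta,\,A_x\phi\,D_x\psi]_{ew} \;=\; -(\phi,\,a_x(D_x\psi\,D_x\zeta))_2 \;=\; -[\phi\,D_x\psi,\,D_x\zeta]_{ew}
\end{equation*}
yields (\ref{sum-bp-3xc-3c}), and the $y$-direction identity (\ref{sum-bp-3yc-3c}) is proved by swapping $x\leftrightarrow y$, $A_x\leftrightarrow A_y$, $d_x\leftrightarrow d_y$, $D_x\leftrightarrow D_y$, $a_x\leftrightarrow a_y$, and the edge inner product $[\cdot,\cdot]_{ew}\leftrightarrow[\cdot,\cdot]_{ns}$ throughout.
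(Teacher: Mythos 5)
Your argument is correct: applying the basic summation-by-parts identity (\ref{sum-bp-2ew-2c}) with $u:=A_x\phi\,D_x\psi$ and $\zeta$ in the role of the cell-centered function, observing that the boundary residues vanish because $D_x\psi$ is zero at the extreme edges under the homogeneous Neumann (or periodic) data, and then transferring the weight via $(a_x(A_x\phi\cdot g),1)_2=(\phi,a_x g)_2$ reproduces exactly the identity claimed, and the $y$-direction case is indeed symmetric. The paper itself states this proposition without proof (deferring to its cited references and explicitly omitting boundary terms), and your two-step derivation is the standard argument it implicitly relies on, so there is nothing further to reconcile.
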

\subsection{Special Treatment for the Advection Term in Cahn-Hilliard equation}\label{App-surface}\label{mass-der-rho-dis}
To let our numerical schemes satisfy the mass conservation together with the energy dissipation law at the fully discrete level, we have employed a special treatment for the advection term in the Cahn-Hilliard equation (\ref{pm-num-phase}) in primitive method and (\ref{pj-num-phase}) in projection method, such that:
\begin{align}
a_{x} (\rho_{ew}D_{x}c u)_{i,j}= \cfrac{1}{2}\bigg(\rho_{i+1,j}\cfrac{c_{i+1,j}-c_{i,j}}{h} u_{i+\frac{1}{2},j}+\rho_{i-1,j}\cfrac{c_{i,j}-c_{i-1,j}}{h} u_{i-\frac{1}{2},j}\bigg),\\
a_{y} (\rho_{ns}D_{y}c v)_{i,j}= \cfrac{1}{2}\bigg(\rho_{i,j+1}\cfrac{c_{i,j+1}-c_{i,j}}{h} v_{i,j+\frac{1}{2}}+\rho_{i,j-1}\cfrac{c_{i,j}-c_{i,j-1}}{h} v_{i,j-\frac{1}{2}}\bigg),
\end{align} 
for $i=1,\dots,m_{1}$ and $j=1,\dots,m_{2}$. Note that, for the sake of simplicity, we omit the upper subscript $n$ or $n+1$ that represent the time step for all the variables. These two terms are so designed that the primitive method (\ref{pm-num-mx})-(\ref{pm-num-chem}) and the projection-type method (\ref{pj-num-mx})-(\ref{pj-num-chem}) can satisfy the mass conservation property and the energy stability at the fully discrete level. In particular, to prove the identities (\ref{afdsf-mass}) and (\ref{pj-dis-mass-con}), we show that
\begin{align}
&\big(a_{x} (\rho_{ew}D_{x}c u),- \alpha\rho\big)_{2}\notag\\
& =\cfrac{ 1}{2}\sum^{m_{1}}_{i=1}\sum^{m_{2}}_{j=1}-\alpha\rho_{i,j}\bigg(\rho_{i+1,j}\cfrac{c_{i+1,j}-c_{i,j}}{h} u_{i+\frac{1}{2},j}+\rho_{i-1,j}\cfrac{c_{i,j}-c_{i-1,j}}{h} u_{i-\frac{1}{2},j}\bigg)\notag\\
&= \cfrac{ 1}{2}\sum^{m_{1}}_{i=1}\sum^{m_{2}}_{j=1}\bigg(\cfrac{\rho_{i+1,j}-\rho_{i,j}}{h} u_{i+\frac{1}{2},j}+\cfrac{\rho_{i,j}-\rho_{i-1,j}}{h} u_{i-\frac{1}{2},j}\bigg)= \big(a_{x} (D_{x}\rho u), 1\big)_{2},\\
&\big(a_{y} (\rho_{ns}D_{y}c v), - \alpha\rho\big)_{2}\notag\\
&= \cfrac{1}{2}\sum^{m_{1}}_{i=1}\sum^{m_{2}}_{j=1}-\alpha\rho_{i,j}\bigg(\rho_{i,j+1}\cfrac{c_{i,j+1}-c_{i,j}}{h} v_{i,j+\frac{1}{2}}+\rho_{i,j-1}\cfrac{c_{i,j}-c_{i,j-1}}{h} v_{i,j-\frac{1}{2}}\bigg)\notag\\
&= \cfrac{1}{2}\sum^{m_{1}}_{i=1}\sum^{m_{2}}_{j=1}\bigg(\cfrac{\rho_{i,j+1}-\rho_{i,j}}{h} v_{i,j+\frac{1}{2}}+\cfrac{\rho_{i,j}-\rho_{i,j-1}}{h} v_{i,j-\frac{1}{2}}\bigg)=\big(a_{y} (D_{y}\rho v), 1\big)_{2}.
\end{align} 
Here we have used the definition of the variable density $\rho$, and the following identity which is analog to (\ref{def-r}) in the temporal discretization:
\begin{align}
&\rho_{i+1,j}-\rho_{i,j}=-\alpha \rho_{i+1,j}\rho_{i,j}(c_{i+1,j}-c_{i,j}),\\
&\rho_{i,j+1}-\rho_{i,j}=-\alpha \rho_{i,j+1}\rho_{i,j}(c_{i,j+1}-c_{i,j}).
\end{align}
\subsection{Special Treatment for the Surface Tension}\label{App-surface}\label{surface-ten}
To let our numerical schemes satisfy the mass conservation together with the energy dissipation law at the fully discrete level, we have employed a special treatment for the surface tension term in the momentum equations (\ref{pm-num-mx}) of our primitive method, and in the projection equations (\ref{pj-num-pjx}) of our projection-type method, such that 
\begin{align}
A_{x}(\rho_{ew} {\bar{\mu}_{c}})_{i+\frac{1}{2},j}~D_{x} c_{i+\frac{1}{2},j}=\cfrac{1}{2}(\rho_{i+1,j}  {\bar{\mu}_{c~i,j}}+\rho_{i,j}  {\bar{\mu}_{c~i+1,j}})\cfrac{c_{i+1,j}-c_{i,j}}{h},\\
A_{y}(\rho_{ns} {\bar{\mu}_{c}})_{i,j+\frac{1}{2}}~D_{y} c_{i,j+\frac{1}{2}}=\cfrac{1}{2}(\rho_{i,j+1}  {\bar{\mu}_{c~i,j}}+\rho_{i,j}  {\bar{\mu}_{c~i,j+1}})\cfrac{c_{i,j+1}-c_{i,j}}{h},
\end{align}
for $i=1,\dots,m_{1}$ and $j=1,\dots,m_{2}$. Note that, for the sake of simplicity, we omit the upper subscript $n$ or $n+1$ that represent the time step for all the variables. These two terms are so designed that our two numerical schemes can satisfy the energy dissipation law at the fully discrete level. In particular, when we multiply the momentum equation (\ref{pm-num-mx}) or (\ref{pj-num-pjx}) by $\boldsymbol{u}=(u,v)$, the surface tension term can be written as 
\begin{align}
\big[A_{x}(\rho_{ew} {\bar{\mu}_{c}})D_{x} c, u\big]_{ew}&=\big(a_{x} (\rho_{ew}D_{x}c~u), {\bar{\mu}_{c}}\big)_{2},\\
\big[A_{y}(\rho_{ns} {\bar{\mu}_{c}})D_{y} c, v\big]_{ns}&=\big(a_{y} (\rho_{ns}D_{y}c~v), {\bar{\mu}_{c}}\big)_{2}.
\end{align}
Note that, throughout this section, all the boundary terms that originated from the summation-by-parts can be eliminated by using the homogeneous Neumann conditions for the cell-centered variables, and the no-slip boundary conditions for the edge centered variables. The results are also valid for the periodic boundary conditions for the cell-centered or edge-centered variables case. 
\subsection{Boundary Conditions}\label{app-bc}
\subsubsection{Cell-Centered Functions}
In this paper we use the Neumann or periodic boundary condition for the cell-centered functions. Specifically, we shall say the cell-centered function $\phi\in \mathcal{C}_{\overline{m}_{1}\times \overline{m}_{2}}$ satisfies homogeneous Neumann boundary conditions if and only if
\begin{align}
&\phi_{m_{1}+1,j}=\phi_{m_{1},j},~~~~\phi_{0,j}=\phi_{1,j},~~~~j=1,\cdots,m_{2},\label{neubcx}\\
&\phi_{i,m_{2}+1}=\phi_{i,m_{2}},~~~~~~\phi_{i,0}=\phi_{i,1},~~~~i=1,\cdots,m_{1}.\label{neubcy}
\end{align}
We use the notation $\boldsymbol{n}\cdot \boldsymbol{\nabla }_{D}\phi=0$ to indicate that $\phi$ satisfies (\ref{neubcx}) and (\ref{neubcy}). The cell-centered function $\phi\in \mathcal{C}_{\overline{m}_{1}\times \overline{m}_{2}}$ satisfies periodic boundary conditions if and only if
\begin{align}
&\phi_{m_{1}+1,j}=\phi_{1,j},~~~~\phi_{0,j}=\phi_{m_{1},j},~~~~j=1,\cdots,m_{2},\label{pe-bcx}\\
&\phi_{i,m_{2}+1}=\phi_{i,1},~~~~~~\phi_{i,0}=\phi_{i,m_{2}},~~~~i=1,\cdots,m_{1}.\label{pe-bcy}
\end{align}
\subsubsection{Edge-Centered Functions}
We use the no-slip or periodic boundary conditions for the edge-centered functions. We shall say the velocity $\boldsymbol{u}=(u,v)$ (for $u\in \mathcal{E}^{ew}_{m_{1}\times m_{2}}$, $v\in \mathcal{E}^{ns}_{m_{1}\times m_{2}}$) satisfies the no-slip boundary conditions $\boldsymbol{u}|_{\Omega}=0$ if and only if
\begin{align}
u_{\frac{1}{2},j}=u_{m_{1}+\frac{1}{2},j}&=0,~~~~j=1,\cdots,m_{2}.\label{noslipbcx}\\
a_y u_{i+\frac{1}{2},\frac{1}{2}}=a_{y} u_{i+\frac{1}{2},m_{2}+\frac{1}{2}}&=0,~~~~i=0,\cdots,m_{1}.\label{noslipbcx}\\
v_{i,\frac{1}{2}}=v_{i,m_{2}+\frac{1}{2}}&=0,~~~~i=1,\cdots,m_{1}.\label{noslipbcy}\\
a_{x}v_{\frac{1}{2},j+\frac{1}{2}}=a_{x}v_{m_{1}+\frac{1}{2},j+\frac{1}{2}}&=0,~~~~j=0,\cdots,m_{2}.\label{noslipbcy}
\end{align}
We shall say the the velocity $\boldsymbol{u}=(u,v)$ (for $u\in \mathcal{E}^{ew}_{m_{1}\times m_{2}}$, $v\in \mathcal{E}^{ns}_{m_{1}\times m_{2}}$) satisfies the boundary condition $\boldsymbol{n}\cdot \boldsymbol{u}|_{\Omega}=0$ if and only if
\begin{align}
u_{\frac{1}{2},j}=u_{m_{1}+\frac{1}{2},j}&=0,~~~~j=1,\cdots,m_{2}.\label{noslipbcx-1}\\
v_{i,\frac{1}{2}}=v_{i,m_{2}+\frac{1}{2}}&=0,~~~~i=1,\cdots,m_{1}.\label{noslipbcy-1}
\end{align}
We shall say the edge-centered function $u\in \mathcal{E}^{ew}_{\overline{m}_{1}\times m_{2}}$ ($v\in \mathcal{E}^{ns}_{m_{1}\times \overline{m}_{2}}$) satisfies the periodic boundary conditions on the east-west (north-south) boundaries if and only if
\begin{align}
&u_{m_{1}+\frac{3}{2},j}=u_{\frac{1}{2},j},~~~~u_{-\frac{1}{2},j}=u_{m_{1}+\frac{1}{2},j},~~~~j=1,\cdots,m_{2},\label{pecbcu}\\
&v_{i,m_{2}+\frac{3}{2}}=v_{i,\frac{1}{2}},~~~~v_{i,-\frac{1}{2}}=v_{i,m_{2}+\frac{1}{2}},~~~~i=1,\cdots,m_{1}.\label{pecbcv}
\end{align}
\subsection{Norms}\label{app-norm}
We define the following norms for cell-centered functions. If $\phi\in\mathcal {C}_{m_{1}\times m_{2}}$, then $\lVert \phi \rVert _{2} : = \sqrt{h^2 (\phi , \phi)_{2}}$. For $\phi \in \mathcal {C}_{\overline{m}_{1} \times \overline{m}_{2}}$, we define the following norms
\begin{align}
\lVert\boldsymbol{\nabla }_{D}\phi \rVert_{2} :&= \sqrt{h^2[D_{x}\phi , D_{x}\phi]_{ew}+h^2[D_{y}\phi , D_{y}\phi]_{ns}},\\
\lVert\sqrt{\psi}\boldsymbol{\nabla }_{D}\phi \rVert_{2} :&= \sqrt{h^2[\psi~D_{x}\phi \rVert D_{x}\phi]_{ew}+h^2[\psi~D_{y}\phi \rVert D_{y}\phi]_{ns}}.\label{norm-cgradc}
\end{align} 
We also define the following norms for the vector of the edge-centered functions $\boldsymbol{u}=(u,v)$ together with the cell-centered function $\phi$ , where $u\in \mathcal{E}^{ew}_{m_{1}\times m_{2}}$, $v$ $\in \mathcal{E}^{ns}_{m_{1}\times m_{2}}$ and $\phi$ $\in \mathcal{C}_{m_{1}\times m_{2}}$, such that
\begin{align}
\lVert \sqrt{\phi} u \rVert_{2} :&= \sqrt{h^2 [\phi u,u ]_{ew} }, ~~~~\lVert \sqrt{\phi} v \rVert_{2} := \sqrt{h^2 [\phi v,v]_{ns} },\\
\lVert \sqrt{\phi}d_{x} u \rVert_{2} :&= \sqrt{h^2 (\phi~d_{x}u,d_{x}u)_{2} }, ~~~~\lVert \sqrt{\phi} d_{y}v \rVert_{2} := \sqrt{h^2 (\phi~d_y v,d_y v)_{2} },\\
\lVert \sqrt{\phi}\mathcal{D}_{y} u \rVert_{2} :&= \sqrt{h^2 \langle \phi~\mathcal{D}_{y}u,\mathcal{D}_{y}u\rangle_{vc} }, ~~~~\lVert \sqrt{\phi}\mathcal{D}_{x} v \rVert_{2} := \sqrt{h^2 \langle \phi~\mathcal{D}_{x}v,\mathcal{D}_{x}v\rangle_{vc} },\\
\lVert \sqrt{\phi} \boldsymbol{u} \rVert_{2} :&= \sqrt{\lVert \sqrt{\phi} u \rVert^2_{2} +\lVert \sqrt{\phi} v \rVert^2_{2}},\label{norm-rho-vel}\\
\lVert \sqrt{\phi} \boldsymbol{\nabla}_{d} \boldsymbol{u} \rVert_{2} :&= \sqrt{ \big\rVert\sqrt{\phi}~d_{x}u\rVert _{2}^{2} +\lVert\sqrt{\phi}~ \mathcal{D}_{y}u \lVert_{2}^{2} +\lVert\sqrt{\phi}~\mathcal{D}_{x}v\lVert^{2}_{2} + \lVert\sqrt{\phi}~d_{y}v\lVert_{2}^{2}},\label{norm-grad-vel}\\
\lVert \sqrt{\phi} \boldsymbol{\nabla}_{d}\cdot \boldsymbol{u} \rVert_{2} :&= \sqrt{\lVert \sqrt{\phi}~d_{x}u \lVert^{2}_{2} +2( \phi~d_{x}u,d_{y}v)_{2}+ \lVert\sqrt{\phi}~d_{y}v\lVert^{2}_{2}}.\label{norm-grad-dot-vel}
\end{align}
\section{Figures for \S 8.1}\label{app-B}
\begin{figure}[h]
        \begin{subfigure}{0.5\textwidth}
                \centering
                \includegraphics[width=\textwidth]{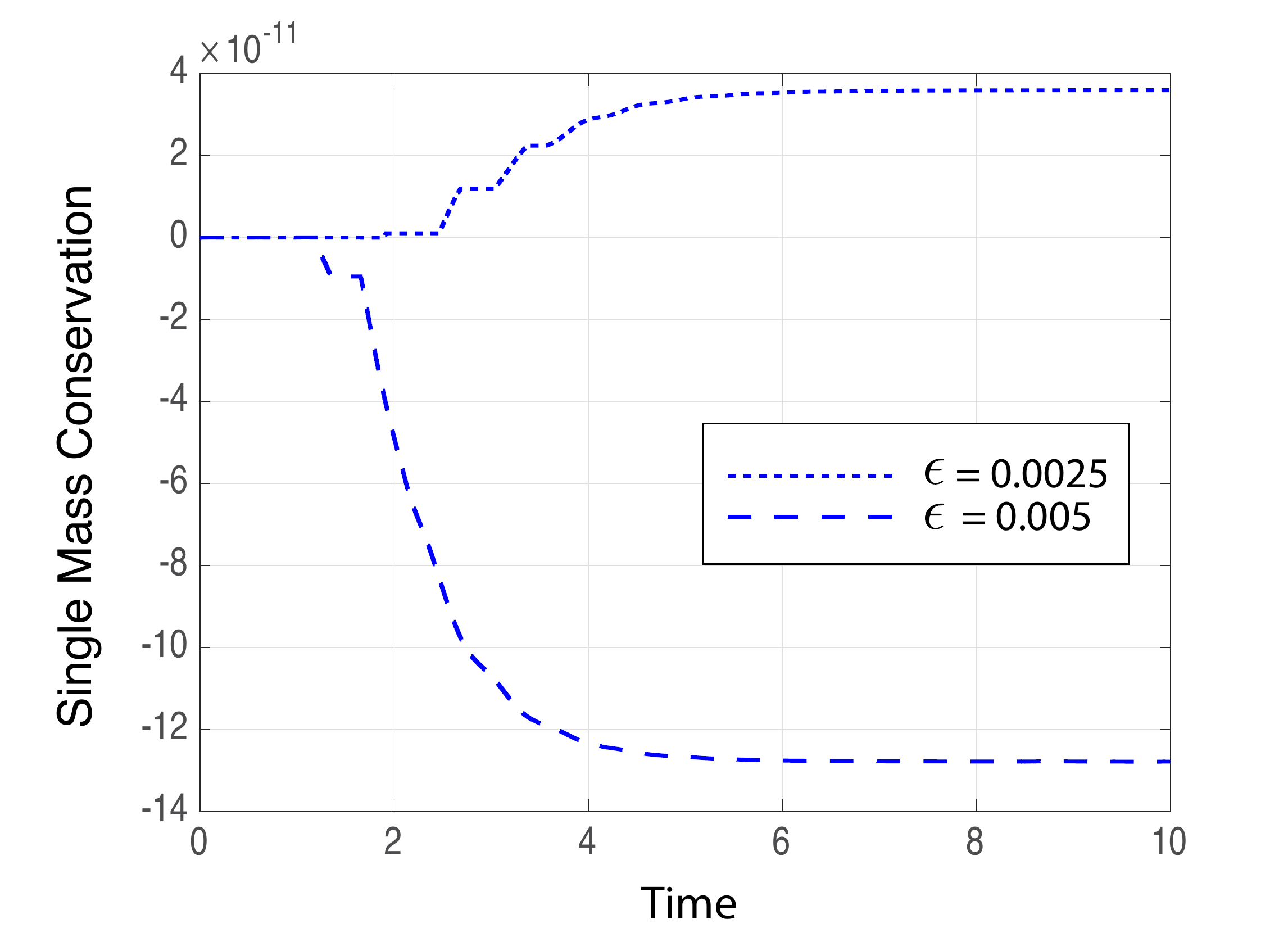}
                               \caption{Primitive method}
        \end{subfigure}%
        \begin{subfigure}{0.5\textwidth}
                \centering
                \includegraphics[width=\textwidth]{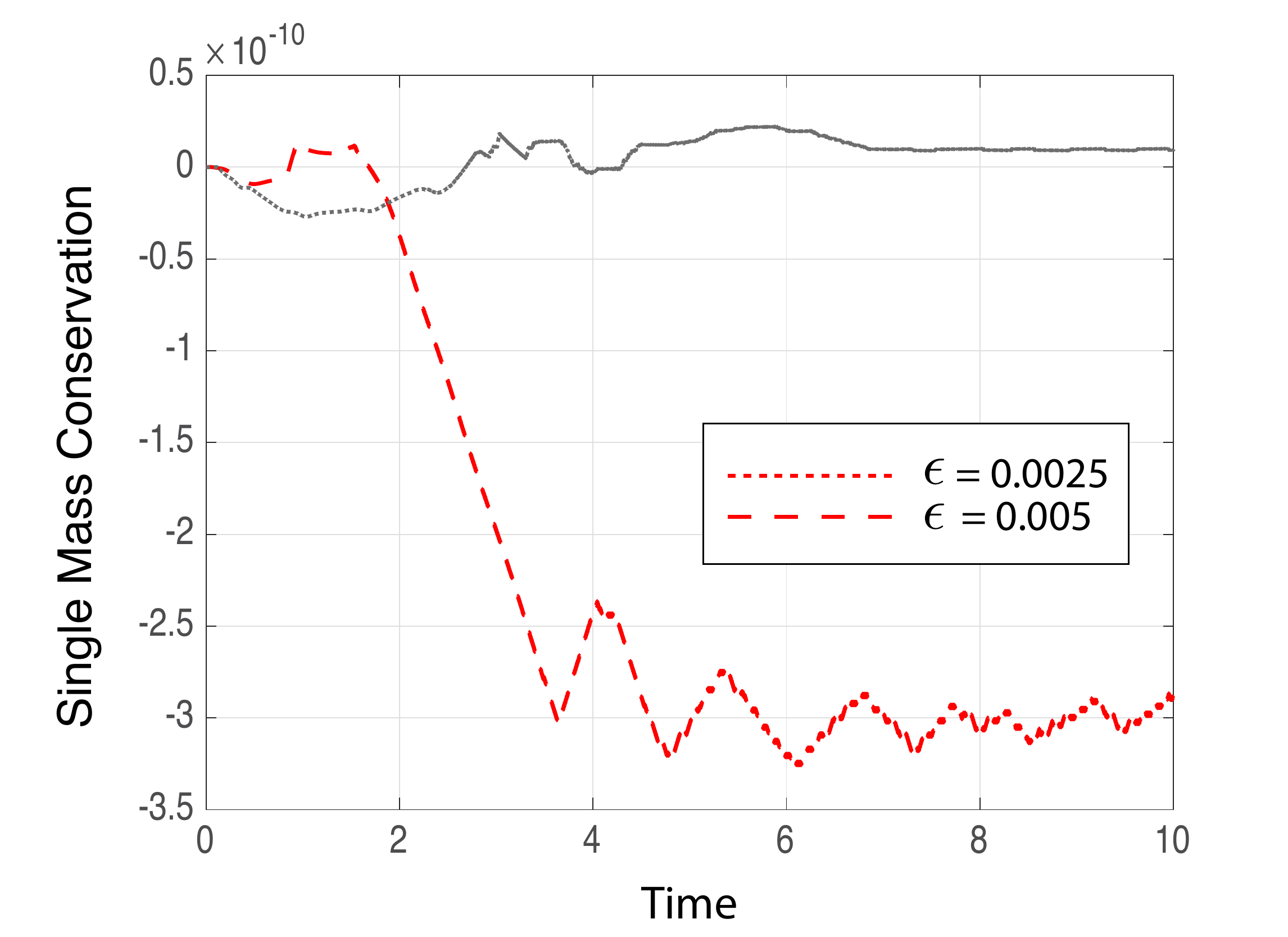}
                               \caption{Projection method}
        \end{subfigure}%
               \caption{Component mass conservation $\int(\rho^{n} c^{n}- \rho^{0} c^{0})$ in Case 1 with density ratio $1:10$ in \S \ref{sec-CW}. (a) Primitive methods; (b) Projection method.}\label{CAP-Den1to10-SMass}
\end{figure}
\begin{figure}[h]
        \begin{subfigure}{0.5\textwidth}
                \centering
                \includegraphics[width=\textwidth]{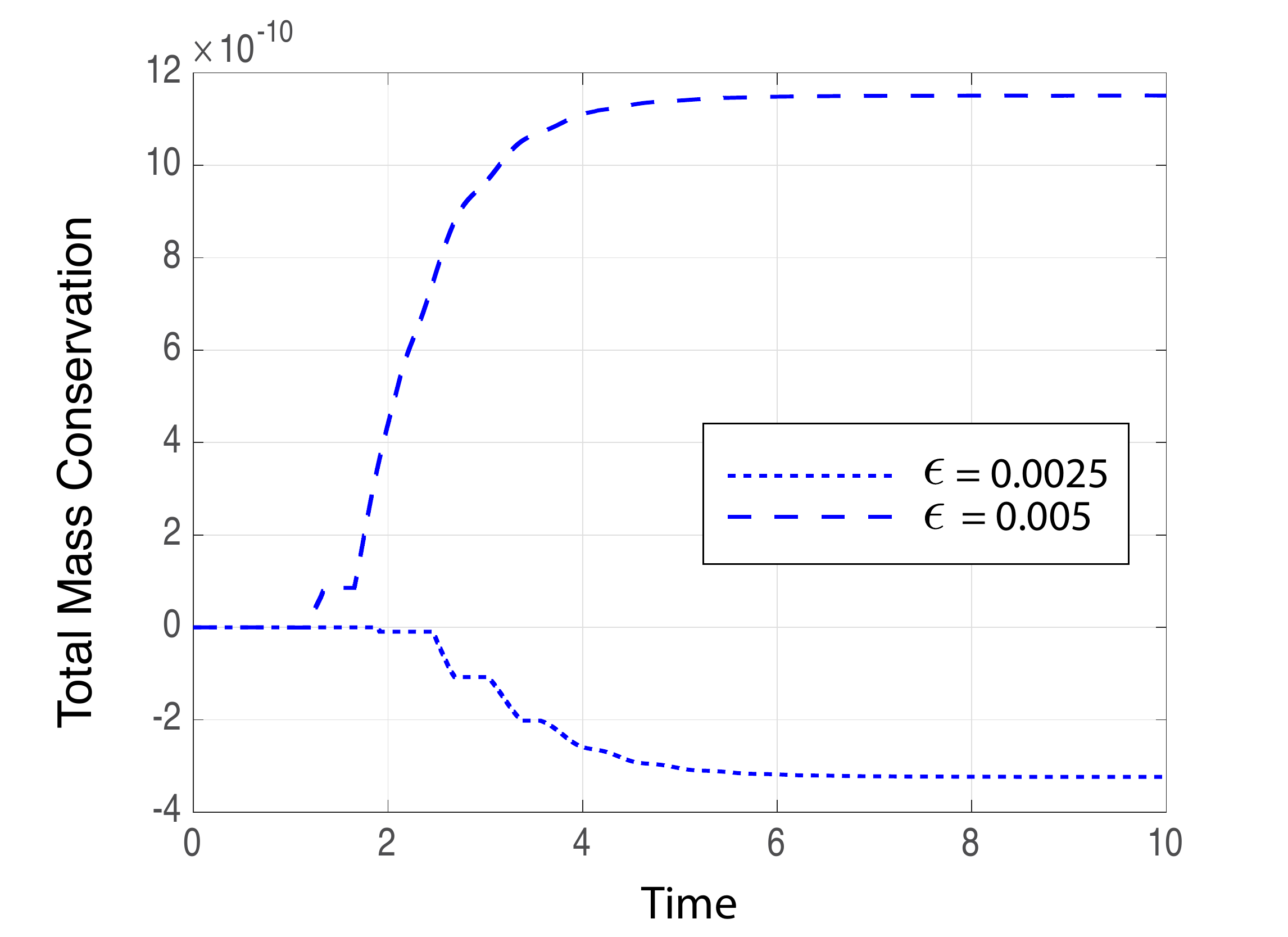}
                               \caption{Primitive method}
        \end{subfigure}%
        \begin{subfigure}{0.5\textwidth}
                \centering
                \includegraphics[width=\textwidth]{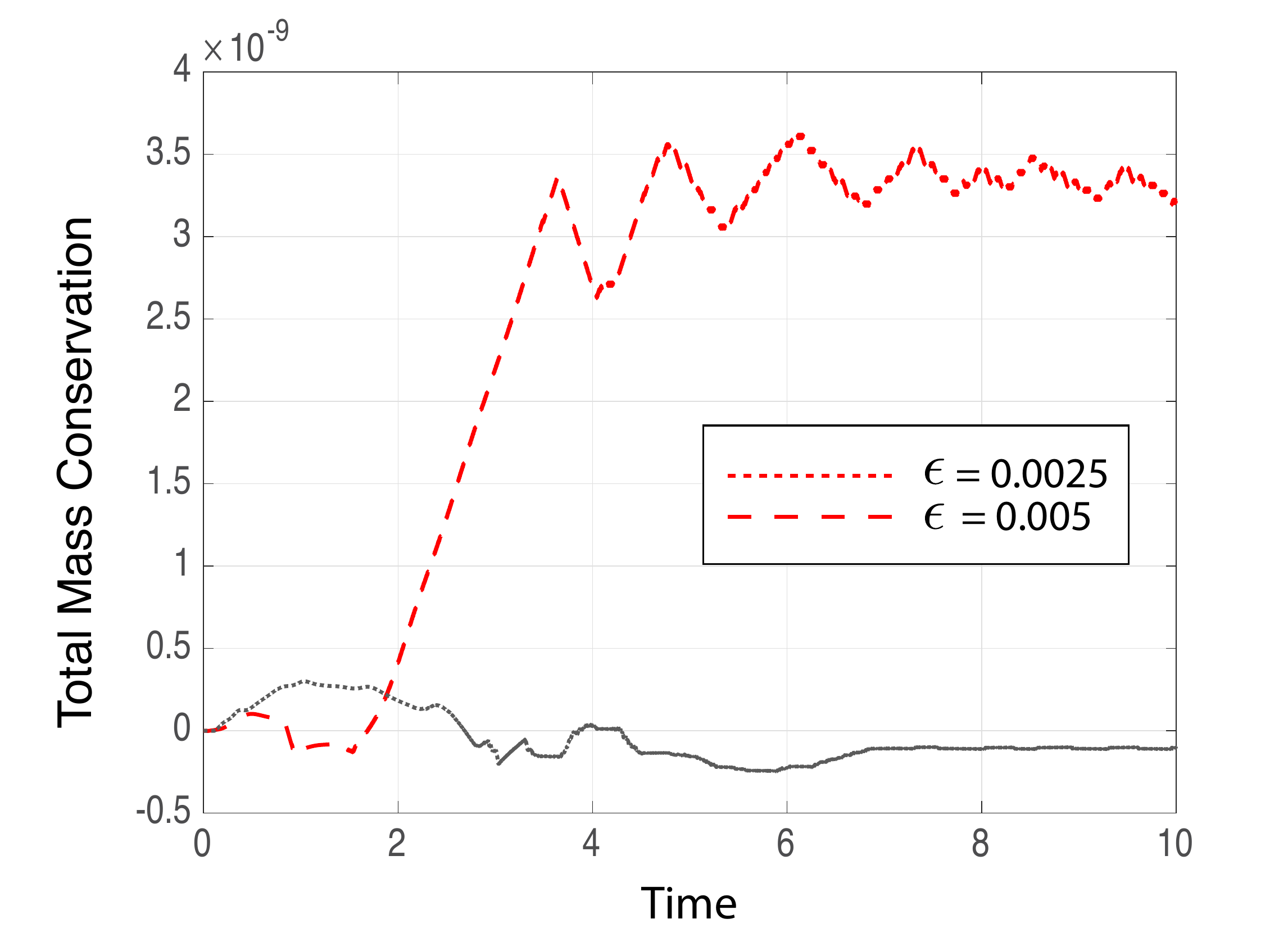}
                               \caption{Projection method}
        \end{subfigure}%
               \caption{Binary fliud mass conservation $\int(\rho^{n}- \rho^{0})$ in Case 1 with density ratio $1:10$ in \S \ref{sec-CW}. (a) Primitive methods; (b) Projection method.}\label{CAP-Den1to10-TMass}
\end{figure}
\end{appendix}
\newpage




\bibliography{ReferenceforPhasefield.bib}{}
\bibliographystyle{plain}

\end{document}